\journal{Journal of Differential Equations}
\newcommand{\C}{\mathcal{C}}  
\newcommand{\Ctheta}{\mathcal{C}_{\theta_{k}}} 
\newcommand{\thetak}{\theta_{k}}
\newcommand{\thetakp}{\theta_{k+1}}
\newcommand{\Jk}{J^k}
\newcommand{\Jkp}{J^{k+1}}
\newcommand{\Jkl}{J^{k-1}}
\newcommand{\Il}{I^{l}_{\theta_{k}}}
\newcommand{\Iln}{I^{l,n}_{\theta_{k}}}
\newcommand{\muL}[1]{U^{#1}_{\underline{L}}}
\newcommand{\Sk}{S^{k}\underline{R}^{*}\otimes N}
\newcommand{\Skp}{S^{k+1}\underline{R}^{*}\otimes N}
\newcommand{\Skl}{S^{k-1}\underline{R}^{*}\otimes N}
\newcommand{\Dub}{\mathcal{F}}
\DeclareMathOperator{\Hom}{Hom}
\newcommand{\pr}{\mathrm{\normalfont \bf{pr}}}
\newcommand{\pJ}{J^{k+1}_{\theta_{k},l}}
\newcommand{\pJL}{J^{k+1}_{\theta_{k},\underline{L}}}
\newcommand{\Dir}{\mathrm{\normalfont \bf{dir}}}
\newcommand{\E}{\mathcal{E}}
\newcommand{\V}{{\mathcal{V}}}
\newcommand{\F}{{\mathcal{F}}}
\newcommand{\uF}[1]{\underline{\mathcal{F}}^{#1}}
\newcommand{\Qphiq}{Q_{\phi_{q}}}
\newcommand{\phiq}{\phi_{q}}
\newcommand{\phiql}{\phi_{q-1}}
\newcommand{\de}{\dif }
\renewcommand{\P}{\mathcal{P}}
\newcommand{\uR}{{\underline{R}}}
\newcommand{\uL}{{\underline{L}}}
\newcommand{\utheta}{\theta_{0}}
\newcommand{\Gr}{\mathrm{Gr\,}}
\newcommand{\osc}{\mathrm{osc\,}}
\newcommand{\virg}[1]{``#1''}
\newcommand\restr[2]{{
  \left.\kern-\nulldelimiterspace 
  #1 
  \vphantom{\big|} 
  \right|_{#2} 
  }}
\newcommand{\St}{^{\textrm{st}}}
\newcommand{\Th}{^{\textrm{th}}}
\numberwithin{equation}{section}
\numberwithin{figure}{section}
\theoremstyle{plain}
\newtheorem{thm}{Theorem}
\newtheorem{prop}{Proposition}
\newtheorem{lem}{Lemma}
\newtheorem{cor}{Corollary}
\theoremstyle{definition}
\newtheorem{defn}{Definition}
\theoremstyle{remark}
\newtheorem{rem}{Remark}
\begin{document}

\begin{frontmatter}


\ead{michael.baechtold@hslu.ch}

\title{Partial extensions of jets and the polar distribution on Grassmannians of non-maximal integral elements}


\author{Michael ~B\"achtold}

\address{Lucerne University of Applied Sciences and Arts\\
Technikumstrasse 21\\
CH--6048 Horw, Switzerland}

\begin{abstract}
We study an intrinsic distribution, called \emph{polar}, on the space of $l$-dimensional integral elements of the higher order contact structure on jet spaces. The main result establishes that this exterior differential system is the prolongation of a natural system of PDEs, named \emph{pasting conditions}, on sections of the bundle of \emph{partial jet extensions}. Informally, a partial jet extension is a $k\Th$ order jet with additional $k+1\St$ order information along $l$ of the $n$ possible directions. A choice of partial extensions of a jet into all possible $l$-directions satisfies the pasting conditions if the extensions coincide along pairwise intersecting $l$-directions.\par
We further show that prolonging the polar distribution once more yields the space of $(l,n)$-dimensional integral flags with its double fibration distribution. When $l>1$ the exterior differential system is holonomic, stabilizing after one further prolongation.
\par
The proof starts form the space of integral flags, constructing the tower of prolongations by reduction.
\end{abstract}

\begin{keyword}
Jet spaces \sep Exterior Differential Systems \sep Geometry of PDEs


\MSC[2010] 14M15 \sep 
35A99 \sep 
35A30 \sep  
57R99 \sep  
53B25 \sep  
58A05 \sep  
58A15 \sep 
58A17 \sep  
58A20 \sep 
58A30 \sep 
58A32 \sep 
58C99 \sep 
58J99 
\end{keyword}

\end{frontmatter}

\setcounter{tocdepth}{2}
\setcounter{section}{-1}
\tableofcontents

\section{Introduction}
Consider a manifold $J$ supplied with an exterior differential system and let $\theta \in J$ be a point. The space $I^{l}_{\theta}$ of $l$-dimensional regular integral elements of the exterior differential system at $\theta$ (we refer to \cite{MR1083148} for basic notions on exterior differential systems) is equipped with a natural distribution (in the sense of \emph{field of tangent planes}): a tangent vector at $L\in I^{l}_{\theta}$ belongs to this distribution if, considered as an infinitesimal first order motion of the integral element $L$, it leaves $L$ inside of its polar space. The existence of this distribution was pointed out to me by A. M. Vinogradov \cite{PolarPrivate} and was called the \emph{polar distribution} in \cite{BaechtMor}. \par
In the special case when the exterior differential system is the higher order contact structure (a.k.a. Cartan distribution) on the manifold  $J=J^{k}(2,1)$ of $k\Th$ order jets of functions in two independent and one dependent variables, the polar distribution on the space of one-dimensional horizontal integral was shown to be locally isomorphic to a Cartan distribution on the jet space of a \emph{new} bundle with \emph{one} independent and $k+1$ dependent variables \cite{ThesisMichi}. This result was extended in \cite{BaechtMor} to the case of one-dimensional integral elements of contact manifolds, i.e. the case  $J=J^{1}(n,1)$. Both proofs were in local coordinates and gave no hint on the geometrical origin of this new bundle, nor on ways to extend the result to other dimensions.\par
Here I remedy this  by:
\begin{itemize}
\item[a)] generalising the result to horizontal integral elements of arbitrary dimension $l<n$ in the Cartan distribution at a jet $\theta \in J^k(n,m)$, with an arbitrary number of independent and dependent variables $n$ resp. $m$. 
\item[b)] giving a coordinate free description of this new bundle and clarifying its geometrical meaning.
\end{itemize} 
The total space of the bundle mentioned in b) will be called the space of \emph{partial jet extensions of $\thetak$} and denoted by $\pJ$.  An element of $\pJ$ can be thought of as an extension of the $k\Th$ order jet $\thetak$ by $k+1\St$ order information in the direction of an $l$-dimensional subspace of the space of independent variables. In terms of local coordinates this means that an element of $\pJ$ specifies the values of the partial derivatives of order $k+1$, in $l$ chosen directions, in addition to the partial derivatives of order $\leq k$ determined by $\thetak$. The base of the bundle of partial jet extensions is the Grassmannian $\Gr(l,n)$ of all possible \virg{directions} along which to extend. Hence a section of the bundle $\pJ\to \Gr(l,n)$ specifies a partial extension of $\thetak$ along each $l$-dimensional direction. There is a natural condition for such a section to be \virg{holonomic}: when all extensions coincide on intersecting directions. These \emph{pasting conditions} can be reformulated as a system of first order linear PDEs on sections of $\pJ$. We then have
\begin{thm}[Main result, first part]\label{thm:mainfirstintro}
The polar distribution on $\Il$, considered as an exterior differential system, is the $k-1\St$ prolongation of the system of pasting conditions on the bundle of partial jet extensions.
\end{thm}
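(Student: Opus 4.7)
The plan is to exhibit a diffeomorphism between $\Il$ and the total space of the $(k-1)\St$ prolongation of the pasting conditions EDS on $\pJ$, and then verify that under this identification the polar distribution corresponds to the canonical Pfaffian system of that prolongation. Following the strategy announced in the abstract, I would construct this identification by descending a tower
\[
\Il \twoheadrightarrow M_{k-2} \twoheadrightarrow \cdots \twoheadrightarrow M_{0}=\pJ
\]
of bundles over $\Gr(l,n)$, each equipped with a natural EDS that is the first prolongation of the EDS on the level immediately below.

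The first step is to establish $\Il\simeq \pJ$ as bundles over $\Gr(l,n)$. A horizontal integral element $L\in\Il$ projects to an $l$-plane $\uL$ in the base of $\Jk$, and, being horizontal, $L$ is reconstructed from $\uL$ together with its horizontal lift. The content of that lift is precisely the partial $(k+1)\St$ order jet of $\thetak$ along $\uL$, which by definition is the fibre of $\pJ$ over $\uL$. In adapted coordinates --- an affine chart on $\Gr(l,n)$ together with fibre coordinates given by the missing $(k+1)\St$ order partial derivatives along the $l$ chosen directions --- the identification is explicit.

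The second step is to read off, in these coordinates, both the polar distribution on $\Il$ and the iterated prolongation of the pasting conditions. The polar condition --- an infinitesimal deformation of $L$ must keep it inside its polar space --- translates into a first-order relation between the infinitesimal move of $\uL$ in $\Gr(l,n)$ and the induced change of the fibre datum, dictated by the $(k+1)\St$ order information of $\thetak$. Iterated total differentiation of the first-order pasting conditions along $\Gr(l,n)$ produces, at level $k-1$, exactly this relation. Rather than doing the direct computation, I would invoke the second half of the main theorem, which identifies the $(l,n)$-integral flag space with the prolongation of the polar distribution, and use it as the top rung of the tower: projecting the flag space down and checking at each stage that the kernel of the projection is a first-order symbol will realize the tower of reductions, with $\Il$ appearing at level $k-1$.

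The main obstacle is this step-by-step verification that each projection in the tower truly \emph{de-prolongs} the EDS, i.e.\ that the distribution on the upper floor is the prolongation --- in the formal sense, matching symbols and not merely containment --- of the one below. In coordinates this reduces to a symbol computation for the Cartan distribution on $\Jkp$: one needs that the successive symbols arrange themselves as Spencer prolongations of the symbol of the pasting conditions. That the partial jet extension bundle $\pJ$ captures exactly the right symmetric $(k+1)\St$ order tensor information along an $l$-plane is what makes the tower clean; the technical difficulty lies in tracking this bookkeeping consistently as the tower is descended $k-1$ times, and in particular in checking formal integrability at every intermediate level so that no spurious compatibility equations are generated along the way.
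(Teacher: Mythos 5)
There is a genuine gap, and it appears already in your first step. The claimed identification $\Il\simeq\pJ$ as bundles over $\Gr(\uR,l)$ is false for $k\geq 1$. A horizontal integral element $L$ over $\uL$ is spanned by vectors $D_i+(\text{vertical part})$ with $i$ ranging over the $\uL$-directions, and integrality forces the vertical parts to assemble into a symmetric tensor; the resulting fibre of $\Il\to\Gr(\uR,l)$ is modelled on $\Skp/\muL{1}$, i.e.\ on all order-$(k+1)$ derivatives involving \emph{at least one} direction of $\uL$, whereas the fibre of $\pJ$ is $\Skp/\muL{k+1}\cong S^{k+1}\uL^{*}\otimes N$, the derivatives taken \emph{purely} along $\uL$. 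These dimensions already disagree for $J^{1}(2,1)$ with $l=1$ (fibre dimensions $2$ versus $1$). The correct statement --- which is essentially the content of the theorem and cannot be assumed --- is that $\Il$ merely fibres over $\pJ$ with fibre modelled on $\muL{k+1}/\muL{1}$ and embeds into $J^{k}(\pJ\to\Gr(\uR,l))$ as the $(k-1)\St$ prolongation; the extra fibre directions are exactly the higher derivatives of a section of $\pJ$ that survive the pasting conditions.

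Beyond this, the step you defer as ``bookkeeping'' is where all the work lies, and the proposal supplies no mechanism for it. The paper's device is the filtration $\muL{0}\subset\cdots\subset\muL{k+2}$ of $\Skp$ by order of vanishing along $\uL$: the intermediate floors $M^{q}$ are obtained by quotienting the flag space $\Iln\cong\Gr(\uR,l)\times\Jkp_{\thetak}$ by the corresponding vertical distributions $\V^{p}$, and an explicit non-holonomic frame $D_{i,j}$, $V^{h}_{\delta,\lambda}$ with the single nontrivial commutator $\left[V^{h}_{\delta,\lambda},D_{i,j}\right]=V^{h}_{\delta-1_i,\lambda+1_j}$ does double duty: it exhibits $\V^{p}$ as the characteristic symmetries of $\F^{p}$ (which is what legitimizes the reduction), and it lets one write down and solve the integrality conditions for horizontal planes at each level, which is precisely the de-prolongation check you identify as the obstacle. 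Without this filtration the intermediate spaces in your tower are not even defined. Finally, invoking the second half of the main theorem (the flag space as prolongation of the polar distribution) to seed the tower is circular as stated; the paper instead constructs the flag distribution and its reductions from scratch and only afterwards recognizes consecutive floors as prolongations.
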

In the case of 1-dimensional integral elements the pasting conditions are trivially satisfied for any section, since there are no non-trivial intersections of one dimensional directions. Hence in that case, by theorem \ref{thm:mainfirstintro}, $\Il$ is the $k\Th$ jet space of the bundle $\pJ\to \Gr(l,n)$, in agreement with the results mentioned above \cite{ThesisMichi, BaechtMor}.\par
We further show 
\begin{thm}[Main result, continued]\label{thm:mainsecondintro}
Prolonging the polar distribution once more leads to the space $\Iln$ of $(l,n)$-dimensional integral flags with its canonical distribution induced from its double fibration structure. Finally, when $l>1$, prolonging once more stabilizes the process leading to an involutive distribution whose integral leaves are in one to one correspond with \virg{full} extensions of $\thetak$, i.e. jets of order $k+1$.
\end{thm}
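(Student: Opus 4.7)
The plan is to construct $\Iln$ as the first prolongation of the polar distribution on $\Il$, and then to analyse one further prolongation, working top-down from integral flags as indicated in the abstract. Recall that the prolongation of a distribution $\mathcal{P}$ is built on the space of integral elements of $\mathcal{P}$ of a chosen dimension, equipped with the new distribution that demands the projection of any tangent vector to lie in the chosen integral element.

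For the first assertion I would identify integral elements of $\mathcal{P}$ at $L\in \Il$ of the appropriate maximal dimension with $n$-dimensional integral elements $M$ of $\Ctheta$ containing $L$. By definition, vectors in $\mathcal{P}_L$ correspond to infinitesimal motions of $L$ that remain inside its polar space; an integral element of $\mathcal{P}$ of maximal horizontal dimension at $L$ then amounts to a transverse $(n-l)$-plane whose direct sum with $L$ is a full $n$-dimensional integral element of $\Ctheta$. This identifies the total space of the first prolongation with $\Iln$. I would next check that the prolonged distribution coincides with the double fibration distribution: a tangent vector at $(L,M)$ lies in the prolongation iff its projection to $\Il$ represents an infinitesimal motion of $L$ confined to $M$, and this condition, unpacked via the two projections $\pi_l:\Iln\to\Il$ and $\pi_n:\Iln\to I^{n}_{\thetak}$, is precisely the sum of their vertical bundles.

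For the stabilisation when $l>1$, I would exploit the classical bijection between $n$-dimensional horizontal integral elements of $\Ctheta$ at $\thetak$ and the fibre of $\Jkp\to\Jk$ over $\thetak$. This turns $\pi_n$ into a smooth fibration of $\Iln$ over the jet fibre $\Jkp_{\thetak}$, with fibres isomorphic to $\Gr(l,n)$. Prolonging the double fibration distribution once more adjoins precisely those tangent directions along which $L$ sweeps out the Grassmannian of $l$-planes inside a fixed $M$. When $l\geq 2$, any two distinct $l$-planes in a given $M$ intersect in a subspace of positive dimension, and patching this intersection condition against admissible deformations of $(L,M)$ forces the $M$-component of a first-order deformation to remain fixed. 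The resulting distribution is then tangent to the fibres of $\Iln\to\Jkp_{\thetak}$, which are smooth and connected; these are its integral leaves, each giving a full $(k+1)$-jet extension of $\thetak$, and no further prolongation adds new information.

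The principal obstacle will be the coordinate-free identification carried out in the first step: one must verify that no spurious integral elements of $\mathcal{P}$ appear at $L$ beyond those arising from full $n$-dimensional completions, and that the dimensions of $\Iln$ and of the first prolongation match. I would handle this using the description of $\mathcal{P}_L$ afforded by Theorem \ref{thm:mainfirstintro} in terms of partial jet extensions, combined with a Cartan-character calculation for the symbol of $\mathcal{P}$. Once this matching is in place, the second prolongation step reduces largely to a dimension count together with the combinatorial observation that intersecting $l$-planes, for $l\geq 2$, force involutivity.
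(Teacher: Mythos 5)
Your global picture---the first prolongation of $(\Il,\P)$ is $(\Iln,\Dub)$, and the second collapses onto $V\pr_{n}$, whose leaves are the fibres of $\pr_{n}:\Iln\to\Jkp_{\thetak}$---is the right one, and the intermediate objects you name all match the paper's. But both of the steps carrying the actual content are asserted rather than proved. First, the identification of maximal horizontal integral elements of $\P$ at $L$ with R-planes containing $L$: you describe such an integral element as ``a transverse $(n-l)$-plane whose direct sum with $L$ is a full $n$-dimensional integral element of $\Ctheta$'', but an integral element of $\P$ is an $l(n-l)$-dimensional subspace of $T_{L}\Il\subset\Hom(L,\Ctheta/L)$, not a subspace of $\Ctheta$; the bijection with R-planes $R\supset L$ is precisely what must be established, in particular the absence of ``spurious'' integral elements. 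You flag this as the principal obstacle and propose a Cartan-character computation, but a dimension count alone cannot exclude extra components of the variety of integral elements. The paper settles it by writing every horizontal plane in the adapted frame as $C_{i,j}=D_{i,j}^{[q]}+\sum C^{\delta,\lambda}_{i,j,h}\partial_{v^{h}_{\delta,\lambda}}$ and checking that the integrality conditions (\ref{conditionC1})--(\ref{conditionC2}) on the coefficients coincide, for $q\geq 2$, with the conditions (\ref{conditionC1'})--(\ref{conditionC2'}) characterising the planes $\Qphiq$; some computation of this kind is unavoidable.

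Second, the stabilisation step. Your justification---``any two distinct $l$-planes in a given $M$ intersect in a subspace of positive dimension''---is false whenever $2l\leq n$ (two generic $l$-planes in an $n$-dimensional $M$ meet only at the origin); what is true, and what the pasting conditions encode, is that the special nearby planes obtained by perturbing a single row of the matrix $A$ share a hyperplane. Even granting the corrected statement, the inference that the $M$-component of a first-order deformation ``must remain fixed'' restates the conclusion rather than deriving it. In the paper this is the one-line observation that at the top level of the tower the relevant fibre coordinates have $|\delta|=k+1$, hence $\lambda=0$, so condition (\ref{conditionC2}) forces every coefficient $C^{\delta,\lambda}_{i,j,h}$ of a horizontal integral element of $\Dub$ to vanish when $l>1$, leaving exactly the planes of $V\pr_{n}$. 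Without that computation, or an equivalent replacement for it, your second step does not go through.
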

The proofs of these theorems proceed from the top of the prolongation tower down: we consider the space of integral flags and exhibit certain natural distributions on it. From these we construct the tower of prolongations by reduction. Along the way we introduce moving frames adapted to the distributions which allow explicitly computations of commutation relations.\par
\subsection*{Structure of the article} 
The article consist of four sections. The first one gives detailed definitions and a precise statement of the main result, while the remaining three sections contain the proof. In section \ref{sect:constructing tower} the tower of fibrations $\Iln \to M^{k} \to \ldots \to M^{0} \to \Gr(l,n)$ is constructed and $M^{k}$ and $M^{0}$ are identified as $\Il$ and $\pJ$. In section \ref{sec:supplying with distr} we supply each manifold in the tower with a distribution and show that on $M^{k}$ this coincides with the polar distribution. In the final section \ref{sect:proof that tower is prolongations} we show that these distributions are consecutive prolongations and identify the one on $M^{1}$ with the pasting conditions.\par
\subsection*{Motivations and relations to other work} 
Spaces of lower dimensional integral elements in the Cartan distribution appear at several places in the theory of PDEs. They are central to characteristics, Monge cones, geometric singularities of PDEs \cite{MR923359, MR966202} and boundary conditions \cite{MorenoCauchy}. They have been used to find differential contact invariants of certain classes of PDEs \cite{MR2985508, DiffinvariantHyperbolicMonge}. Flags of integral elements appear in the context of the Cartan-K\"ahler theorem \cite{MR1083148}. To the authors knowledge, the polar distribution made its first appearance in the literature in \cite{ThesisMichi, BaechtMor}, where the reader may find simple applications to the classification of a third order PDE. The author is unaware of any previous appearance of the bundle of partial jet extension and the pasting conditions.\par
\section{Definitions and statement of main results}
\subsection{Conventions on jets}\label{subs:convent jets}
We work in the setting of jets of $n$-dimensional submanifolds in a fixed $n+m$-dimensional ambient manifold $E$. The space of $k\Th$ order jets is denoted with $\Jk =J^k(E,n)$. We think of such jets as \emph{infinitesimal germs} of $n$-submanifolds in $E$. The reader not familiar with jets of submanifolds might as well think of the locally isomorphic space of jets of sections of a bundle with $m$-dimensional fibers (corresponding to $m$ dependent variables) and $n$-dimensional base (corresponding to $n$ independent variables). We fix throughout a jet $\thetak \in \Jk$ of order $k\geq1$ and denote with $\Ctheta$ the plane of the Cartan distribution at $\thetak$. The terminology \emph{Cartan distribution} and \emph{higher contact structure} are used synonymously. Manifolds are real, although all arguments remain valid over any field of characteristic 0.\par
We shall make use of several facts concerning jets and the Cartan distribution which we collect in this subsection. The initiated reader may want to skip ahead to subsection \ref{subsec:integral elem} and return here when necessary. Further notational conventions may be found in section \ref{sec:conventions}.
\par
For $k>r$ there are natural projections $\pi_{k,r}:J^{k}\to J^{r}$  forgetting higher order information of jets. We say that $\thetak$ \emph{extends} the jet $\theta_{r}\in J^r$ (or that $\thetak$ \emph{restricts}  to $\theta_{r}$) when $\pi_{k,r}(\thetak)=\theta_{r}$. In particular, the restriction of $\thetak$ to order $0$ is a point in $E=J^{0}$ denoted with $\utheta$.\par
We use the convention of indexing the fiber of a bundle with its base point, and hence denote the manifold of all $k+1\St$ order jets extending $\thetak$ with $\Jkp_{\thetak}$. \par
There is a natural bijection between $n$-dimensional horizontal integral planes $R\subset\Ctheta$ and jets of order $k+1$ extending $\thetak$. Such integral planes are called R-planes in \cite{MR1670044}. The R-plane corresponding to $\thetakp\in \Jkp_{\thetak}$ is denoted with $R_{\thetakp}$. The R-plane in $J^0=E$ corresponding to the $1\St$ order restriction of $\thetak$ will be denoted with $\uR\subset T_{\theta_0}E$. \par
The fiber $\Jkp_{\thetak}$ is affine with underlying vector space $\Skp$ \cite{MR966202}, where $N$ is the \emph{normal tangent space} $T_{\theta_{0}}E/\uR$. We will interpret tensors in $\Skp$ as homogeneous polynomial maps from $\uR$ to $N$ of degree $k+1$.\par
For a distribution $\E$ on a manifold $M$ the \emph{curvature form} is the skew-symmetric tensor
\begin{equation}
\Omega: \E\wedge \E \to \left[\E,\E\right ]/\E
\end{equation}
induced by the Lie bracket of sections of $\E$. 
Here $\left[\mathcal{E},\mathcal{E}\right]$ denotes the \emph{derived distribution} of $\mathcal{E}$, which is the distribution spanned by $\mathcal{E}$ and Lie-brackets of fields in $\mathcal{E}$. The curvature form of the Cartan distribution $\C$ is called the \emph{metasymplectic form}. One can show that there is a natural isomorphism $\restr{\left[ \C,\C\right]/\C}{\thetak}\cong S^{k-1}\uR^*\otimes N$ so the metasymplectic form is considered of type
\begin{equation}
\Omega: \Ctheta\wedge \Ctheta\to \Skl .
\end{equation}
In standard local coordinates $x_i, u^j, u^j_\sigma$ on jet spaces $J^k$, where $x_i$ are the independent variables, $u^j$ are the dependent variables and $u^j_\sigma$ are jet coordinates with $\sigma=(\sigma_1,\ldots,\sigma_m)\in \mathbb{N}^m$ a multiindex of length $|\sigma|\leq k$, the metasymplectic structure acts as
\begin{align}
\Omega\left(D_{i},\,D_{j}\right) & = 0\\
\Omega \left(\partial_{u_{\sigma}^{j}},\,\partial_{u_{\sigma'}^{j'}}\right) & =  0\\
\Omega \left(\partial_{u_{\sigma}^{j}},\,D_{i}\right) & =  \partial_{u_{\sigma-1_{i}}^{j}}\label{metasymplecticnontric} .
\end{align}
Here $D_{i}=\partial_{x_i}+\sum_{|\sigma|<k}u_{\sigma+1_i}^{j}\partial_{u_{\sigma}^{j}}$ are \emph{total derivatives} and the vertical fields $\partial_{u_{\sigma}^{j}}$ correspond to the homogeneous polynomials 
\begin{equation}
\frac{1}{\sigma!}(dx_{1})^{\sigma_{1}}\cdot\ldots\cdot(dx_{n})^{\sigma_{n}}\otimes\frac{\partial}{\partial u^{i}}\in \Skl
\end{equation}
under the identification $\restr{\left[ \C,\C\right]/\C}{\thetak}\cong S^{k-1}\uR^*\otimes N$.\par
\subsection{Integral elements and the polar distribution}\label{subsec:integral elem}
Recall that a vector subspace $L\subset \Ctheta$ is called an \emph{integral element} \cite{MR1083148} (or involutive subspace in \cite{MR1670044}) of the Cartan distribution, if all differential forms in the differential ideal generated by the Cartan distribution vanish when restricted to $L$. Equivalently, $L$ is integral if the metasymplectic form $\Omega$ vanishes when restricted to $L$. Such a plane is \emph{horizontal} if it is transversal to fibers of the projection $\pi_{k,k-1}:\Jk\to \Jkl$, which turns out to imply transversality with respect to $\pi_{k,0}:\Jk\to J^0$.
\begin{defn}
The space of \emph{horizontal $l$-dimensional integral elements of $\Ctheta$} is
\begin{equation}
\Il:=\left \{L\subset \Ctheta \,\middle|\, \dim L=l, \ \Omega|_L= 0,\  L\textrm{ horizontal}\right \} .
\end{equation}
\end{defn}
Horizontal integral elements of maximal dimension are precisely R-planes \cite{MR1670044}.\par
To define the polar distribution on $\Il$ recall that the \emph{polar space}  $L^\perp $ \\\cite{MR1083148}  of an integral element $L$ is defined as the $\Omega$-orthogonal of $L$:
\begin{equation}
L^{\perp}:=\left \{ v\in \Ctheta \,\middle|\, \Omega(v,w)=0\; \forall w\in L \right \}  .
\end{equation}
Since there is a canonical identification
\begin{equation}
T_L\Gr(\Ctheta,l)\cong \Hom(L,\Ctheta / L )
\end{equation}
\cite{MR1416564} and since $\Il \subset \Gr(\Ctheta,l)$, a tangent vector $\dot{L}$ at $L\in \Il$ may be understood as a linear map
\begin{equation}\label{tangentvectortoGr}
L\stackrel{\dot{L}}{\longrightarrow}\frac{\Ctheta}{L}  .
\end{equation}
We define the \emph{osculator}  of $\dot{L}$ as 
\begin{equation}
\osc \dot{L} := \text{pr}^{-1}\operatorname{im} \dot{L}
\end{equation}
where $\text{pr}:\Ctheta\to \frac{\Ctheta}{L}$ is the canonical projection and $\operatorname{im} \dot{L}$ is the image of $\dot{L}$. The osculator may be thought of as the span of $L$ and all infinitesimally near $L_{t}\in \Gr(\Ctheta,l)$ reached by the infinitesimal displacement $\dot {L}$.\par
Using these notions we give
\begin{defn}
The plane of the \emph{polar distribution} $\P$ on $\Il$ at $L\in \Il$ is
 \begin{equation}
\P_L:=\left\{\dot{L}\in T_L (\Il) \,\middle|\,  \osc\dot{L}\subseteq  L^\perp  \right \}.
\end{equation}
\end{defn}
Alternatively we have the simpler but equivalent description
\begin{equation}\label{eq:descrpolar}
\P_{L}=\left \{\dot{L}\in \Hom(L, \Ctheta / L) \,\middle|\, \Omega(l_{1},\dot{L}(l_{2}))= 0 \textrm{ for all }l_{1},l_{2}\in L \right \},
\end{equation}
where we interpret $\dot{L}$ as the map \ref{tangentvectortoGr} and $\dot{L}(l_{2})$ is its application to $l_{2}$.
Description \ref{eq:descrpolar}  follows from
\begin{lem} A vector $\dot{L}\in T_L\Gr(\Ctheta,l)$ is tangent to the submanifold $\Il\subset \Gr(\Ctheta,l)$ at $L\in\Il$ iff
\begin{equation}\label{eq:tangentspaceIl}
\Omega(l_{1},\dot{L}(l_{2}))= \Omega(l_{2},\dot{L}(l_{1}))  \quad \textrm{for all} \quad l_{1},l_{2}\in L .
\end{equation}
\end{lem}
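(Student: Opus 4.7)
The plan is to compute the tangent space to $\Il$ at $L$ by differentiating the defining condition $\Omega|_{L_{t}}=0$ along a smooth family $L_{t}$ of $l$-planes in $\Ctheta$ with $L_{0}=L$ and initial velocity $\dot L$. Horizontality is an open condition that is automatically preserved for small $t$, so it plays no role in the computation; only the integrability constraint does.

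First I would pick a basis $l_{1},\ldots,l_{l}$ of $L$ and extend it to a smooth family of bases $l_{i}(t)$ of $L_{t}$. Under the identification $T_{L}\Gr(\Ctheta,l)\cong \Hom(L,\Ctheta/L)$ the velocity $\dot L$ is the map $l_{i}\mapsto \dot l_{i}(0)\bmod L$. The condition $L_{t}\in \Il$ reads $\Omega(l_{i}(t),l_{j}(t))=0$ for all $i,j$. Differentiating at $t=0$ and using the skew-symmetry of $\Omega$ gives
\begin{equation*}
\Omega(l_{i},\dot l_{j}(0))=\Omega(l_{j},\dot l_{i}(0)).
\end{equation*}

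The key observation is that each side depends only on the class of the corresponding $\dot l_{i}(0)$ modulo $L$: replacing a lift by $v\in L$ changes the expression by $\Omega(l_{k},v)$, which vanishes because $L$ is integral. Hence the equation is intrinsic in $\dot L$ and, by bilinearity, extends from a chosen basis to all of $L$, reproducing exactly (\ref{eq:tangentspaceIl}). This establishes the \emph{only if} direction.

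For the converse, view $L'\mapsto \Omega|_{L'}$ as a smooth section of $\bigwedge^{2}\mathcal{T}^{*}\otimes \Skl$ over $\Gr(\Ctheta,l)$, with $\mathcal{T}$ the tautological bundle; then $\Il$ is precisely its zero locus and the same computation identifies its linearization at $L$ with the map $\dot L\mapsto \bigl((l_{1},l_{2})\mapsto \Omega(l_{1},\dot L(l_{2}))-\Omega(l_{2},\dot L(l_{1}))\bigr)$. Since $\Il$ is assumed to be a submanifold of the Grassmannian, its tangent space agrees with the kernel of this linearization, which is exactly the stated symmetry condition. The only mildly delicate point, and what I expect to require the most care, is the bookkeeping that makes the naive derivative $\Omega(l_{i},\dot l_{j}(0))$ descend to a well-defined expression in $\dot L$; as noted, this reduces entirely to $\Omega|_{L}=0$.
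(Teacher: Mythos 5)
The forward direction of your argument is exactly the paper's: differentiate $\Omega(l_i(t),l_j(t))=0$ along a family $L_t\subset\Il$, use skew-symmetry, and note that the result depends only on $\dot l_i(0)\bmod L$ because $\Omega|_L=0$. That part is complete.

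The converse is where there is a genuine gap. You describe $\Il$ as the zero locus of the section $L'\mapsto \Omega|_{L'}$ and assert that, since $\Il$ is a submanifold, its tangent space \emph{equals} the kernel of the linearization. But being a submanifold only gives the inclusion $T_L\Il\subseteq\ker$ (which is the forward direction again); equality requires either that the section be transverse to the zero section at $L$ (equivalently, that the linearization $\dot L\mapsto\bigl((l_1,l_2)\mapsto\Omega(l_1,\dot L(l_2))+\Omega(\dot L(l_1),l_2)\bigr)$ be surjective onto $\Lambda^2L^*\otimes S^{k-1}\uR^*\otimes N$), or a dimension count, or an explicit family of integral planes realizing each $\dot L$ in the kernel. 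A zero locus can perfectly well be a submanifold strictly smaller than the kernel of the linearization (think of $s(x,y)=(x^2,y)$ at the origin), so the submanifold hypothesis alone does not close the argument. The paper instead proves the converse constructively: given $\dot L$ satisfying \ref{eq:tangentspaceIl}, it splits $\dot L$ into a vertical part $\dot L_{\text{vert}}$ (valued in a complement of $L$ inside an R-plane $R\supset L$) and a horizontal part $\dot L_{\text{hor}}$ (valued in $S^kR^*\otimes N$), shows the graph of $\dot L_{\text{vert}}$ is integral, extends it to an R-plane $R'=\mathrm{graph}(A)$, and writes down the explicit curve $L_t=\langle b_i+t\dot L_{\text{hor}}(b_i)+tA(b_i+t\dot L_{\text{hor}}(b_i))\rangle$ of integral elements with velocity $\dot L$. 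To repair your proof you would need either to supply this kind of construction or to verify surjectivity of the linearization at every horizontal $L\in\Il$.
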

\begin{proof}
Given $\dot{L}\in T_{L}\Il$ we first show that \ref{eq:tangentspaceIl} holds. For this consider a smooth one parameter family of planes $L_{t}\in \Il$ with $L_{0}=L$, $\restr{\frac{\de }{\de t}L_{t}}{t=0}=\dot{L} $ and families of vectors $l_{i}(t)\in L_{t}$ with $l_{i}(0)=l_{i}$ and 
\begin{equation}\label{eq:parametervectors}
\left( \restr{\frac{\de }{\de t}l_{i}(t) }{t=0}\mod L \right)=\dot{L}(l_{i})
\end{equation}
for $i=1,2$. Since $L_{t}$ is integral we have
\begin{equation}\label{eq:omega}
\Omega \left(l_{1}(t),l_{2}(t)\right)=0 .
\end{equation}
Taking the derivative with respect to $t$ at $t=0$ on both sides of equation \ref{eq:omega} using the product rule and \ref{eq:parametervectors} we obtain
\begin{equation}
\Omega\left(l_{1},\dot{L}(l_{2})\right)+\Omega\left(\dot{L}(l_{1}),l_{2}\right)=0 ,
\end{equation}
which by skew-symmetry of $\Omega$ leads to \ref{eq:tangentspaceIl}.
\par
Conversely, assume $\dot{L}\in \Hom(L,\Ctheta/L)$ satisfies \ref{eq:tangentspaceIl} with $L\in \Il$. We need to show $\dot{L}\in T_{L}\Il$. Choose an R-plane $R$ such that $L\subset R$ and choose a splitting $R=L\oplus L^{\text{comp}}$. This gives a splitting of the Cartan plane into three components $\Ctheta=L\oplus L^{\text{comp}} \oplus (S^{k}R^{*}\otimes N)$ where the last component is the tangent space to the fiber of $\pi_{k,k-1}:J^{k}\to J^{k-1}$. 
This induces a decomposition $\dot{L}=\dot{L}_{\text{vert}}\oplus \dot{L}_{\text{hor}}$ into a vertical $\dot{L}_{\text{vert}}:L\to L^{\text{comp}}$ and horizontal component $\dot{L}_{\text{hor}}:L\to S^{k}R^{*}\otimes N$. 
Substituting $\dot{L}$ with $\dot{L}_{\text{vert}}\oplus \dot{L}_{\text{hor}}$ in \ref{eq:tangentspaceIl} we find that $\dot{L}_{\text{vert}}$ satisfies $\Omega\left(l_{1},\dot{L}_{\text{vert}}(l_{2})\right)+\Omega\left(\dot{L}_{\text{vert}}(l_{1}),l_{2}\right)=0$. This implies that the graph of $\dot{L}_{\text{vert}}$ is an $l$-dimensional integral element in $\Ctheta$. 
Pick an R-plane $R'$ such that $\text{graph}(\dot{L}_{\text{vert}})\subset R'$ and interpret this new R-plane as the graph of a linear map $A:R\to S^{k}R^{*}\otimes N$. Since $R'$ is integral it follows that  $\Omega\left(r_{1},A(r_{2})\right)+\Omega\left(A(r_{1}),r_{2}\right)=0$ for all $r_{i}\in R$. Moreover by construction $A(l)=\dot{L}_{\text{vert}}(l)$ for all $l\in L$.\par
We now define a one parameter family of $l$-dim planes $L_{t}\in \Gr(\Ctheta,l)$ as follows. Pick a basis $b_{1},\ldots,b_{l}$ of $L$ and define $L_{t}$ as the span of the vectors $b_{i}(t):=b_{i}+t\dot{L}_{\text{hor}}(b_{i}) + t\cdot A(b_{i}+t\dot{L}_{\text{hor}}(b_{i}))$. It is easy to see that $L_{0}=L$ and $\restr{\frac{\de }{\de t}L_{t}}{t=0}=\dot{L} $. We claim that all the $L_{t}$ are integral elements, which would finish the proof. For this it suffices to show that $\Omega(b_{i}(t),b_{j}(t))=0$ wich follows from a straightforward computation.
\end{proof}

\subsection{The bundle of partial jet extensions}
In this subsection we define the space of partial jet extensions of a $k\Th$ order jet $\thetak$, together with its fibration over a standard Grassmannian. We do this by introducing an equivalence relation on all $k+1\St$ order jets extending $\thetak$.

Let $\thetakp, \thetakp'\in\Jkp_{\thetak}$ be two $k+1\St$ order jets extending $\thetak$ and fix $\uL\in \Gr(\uR,l)$. We think of $\thetakp, \thetakp'$ as infinitesimal germs of submanifolds in $E$ having contact of order $k$ along $\thetak$, while $\uL$ is thought of as an $l$-dimensional direction inside these germs. 

Using local coordinates the equivalence relation is defined as follows: choose splitting coordinates $x_{1},\ldots,x_{n},u_{1},\ldots,u_{m}$ on $E$ centered at $\theta_{0}$  such that $\uL$ is spanned by $\partial_{x_{1}},\ldots,\partial_{x_{l}}$. Let
\begin{equation}\label{localsectionjet1}
u_{j}=F_{j}(x_{1},\ldots,x_{n})
\end{equation}
and
\begin{equation}\label{localsectionjet2}
u_{j}=G_{j}(x_{1},\ldots,x_{n})
\end{equation} 
be two sets of locally defined functions with $j=1,\ldots,m$, such that $\thetakp$ (resp. $\thetakp'$) is the $k+1\St$ jet of \ref{localsectionjet1} (resp. \ref{localsectionjet2}). Hence the jets  $\thetakp$ and $\thetakp'$ are determined by all partial derivatives of $F$ and $G$ at $0$ of order $\leq k+1$. Since $\thetakp$ and $\thetakp'$ are tangent of order $k$, all partial derivatives of $F$ and $G$ at $0$ of order $\leq k$ are equal. 

\begin{defn}
We say that $\thetakp$ and $\thetakp'$ are  \emph{tangent of order $k+1$ in direction $\uL$} if all $k+1\St$ order partial derivatives of $F$ and $G$ involving \emph{only} $\partial_{x_{1}},\ldots,\partial_{x_{l}}$ coincide. 
\end{defn}
An equivalent coordinate independent description is given by
\begin{lem}\label{lem:tangencyordk+1}
Two jets $\thetakp$ and $\thetakp'$ extending $\thetak$ are tangent of order $k+1$ along $\uL$ iff the polynomial $\thetakp-\thetakp'\in \Skp$ vanishes when restricted to $\uL$.
\end{lem}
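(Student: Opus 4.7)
The plan is to reduce the statement to a direct coordinate calculation, using the affine identification $\Jkp_{\thetak} - \Jkp_{\thetak} = \Skp$ recalled earlier and the interpretation of a tensor in $S^{k+1}\uR^{*}\otimes N$ as a homogeneous polynomial map $\uR\to N$ of degree $k+1$. Under this viewpoint, \emph{vanishing on $\uL$} means vanishing as a polynomial map when restricted to vectors of $\uL$, which is equivalent to the tensor lying in the kernel of the natural restriction $S^{k+1}\uR^{*}\otimes N \to S^{k+1}\uL^{*}\otimes N$ induced by the inclusion $\uL\hookrightarrow \uR$.

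First I would unwind what the difference $\thetakp - \thetakp' \in \Skp$ looks like in the adapted splitting coordinates $x_1,\ldots,x_n,u^1,\ldots,u^m$ from the definition. Using the identification stated in the excerpt between the fiber coordinate vectors $\partial_{u^j_\sigma}$ and the monomials $\frac{1}{\sigma!}(dx_1)^{\sigma_1}\cdots(dx_n)^{\sigma_n}\otimes \partial_{u^j}$ (shifted to degree $k+1$), the difference of the two jets determined by $F_j$ and $G_j$ is
\begin{equation}
\thetakp-\thetakp' \;=\; \sum_{|\sigma|=k+1}\sum_{j=1}^{m} \frac{\partial^{k+1}(F_j-G_j)}{\partial x^{\sigma}}(0)\,\cdot\, \frac{1}{\sigma!}(dx_1)^{\sigma_1}\cdots (dx_n)^{\sigma_n}\otimes \partial_{u^j}.
\end{equation}

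Next I would restrict this polynomial to $\uL=\mathrm{span}(\partial_{x_1},\ldots,\partial_{x_l})$. Since $dx_i|_{\uL}=0$ for $i>l$, the monomial $(dx_1)^{\sigma_1}\cdots(dx_n)^{\sigma_n}$ restricts to a nonzero polynomial on $\uL$ exactly when $\sigma_{l+1}=\cdots=\sigma_n=0$, and the surviving monomials $(dx_1)^{\sigma_1}\cdots (dx_l)^{\sigma_l}$ with $\sigma_1+\cdots+\sigma_l=k+1$ form a basis of $S^{k+1}\uL^{*}$. Consequently, the restriction to $\uL$ vanishes iff for every $j$ and every multi-index $\sigma$ with $\sigma_{l+1}=\cdots=\sigma_n=0$ and $|\sigma|=k+1$ the coefficient $\partial^{k+1}(F_j-G_j)/\partial x^{\sigma}(0)$ is zero. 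These coefficients are precisely the $k+1\St$ order partial derivatives involving only $\partial_{x_1},\ldots,\partial_{x_l}$, so their vanishing is the coordinate definition of tangency of order $k+1$ along $\uL$.

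There is no real obstacle, only a small bookkeeping point I would want to verify: that the identification of $\partial_{u^j_\sigma}$ with the monomial tensor is compatible with the affine structure on $\Jkp_{\thetak}$ in the way used above (so that the coefficients of $\thetakp-\thetakp'$ really are the differences of Taylor coefficients of $F_j$ and $G_j$). This is a direct consequence of the standard jet coordinates on $J^{k+1}$ being Taylor coefficients in $x_1,\ldots,x_n$, and is exactly the assertion made in the excerpt preceding the metasymplectic formulas. Once this compatibility is in hand, the two equivalent conditions read off from the displayed expression without further work.
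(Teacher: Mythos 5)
Your proposal is correct and follows the same route as the paper, whose proof is simply a one-line appeal to the properties of the affine $\Skp$-structure on $\Jkp_{\thetak}$ (with a reference to Lychagin); your coordinate computation — identifying $\thetakp-\thetakp'$ with the homogeneous degree-$k+1$ part of the Taylor expansion of $F-G$ and restricting to $\uL$ — is exactly the verification of that fact which the paper leaves to the citation. The bookkeeping point you flag (compatibility of the $\partial_{u^j_\sigma}\leftrightarrow\frac{1}{\sigma!}(dx)^{\sigma}\otimes\partial_{u^j}$ identification with the affine structure) is indeed the only substantive content, and you resolve it correctly.
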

\begin{proof}
From the properties of the affine $\Skp$-structure on $\Jkp_{\thetak}$. See for instance \cite{MR966202}.
\end{proof}
It follows immediately that tangency of order $k+1$ along $\uL$ is an equivalence relation on jets of order $k+1$ extending $\thetak$, which leads to
\begin{defn} We denote with $\pJL$ the quotient set under this equivalence relation and call it the \emph{space of partial extensions of $\thetak$ along $\uL$}. 
\end{defn}
We think of an element in $\pJL$ as the jet $\thetak$ with additional $k+1\St$ order information in direction of $\uL$.  
\par
By varying $\uL$ in $\Gr(\uR,l)$, the spaces $\pJL$ make up the fibers of a bundle which we denote with
\begin{align}
\Dir: \pJ & \rightarrow \Gr(\uR,l)\\
        \phi &\mapsto \uL=\Dir(\phi)
\end{align}
where the projection $\Dir$ maps a partial extension $\phi$ to its direction of extension $\uL$. By definition, a section of the bundle $\Dir$ specifies a partial jet extension of $\thetak$ along every direction $\uL\in \Gr(\uR,l)$.\par
There is a natural \virg{holonomicity} condition for such a section.
\begin{defn}\label{defn:pasting}
We say a section $s:\Gr(\uR,l) \to \pJ$ of partial jet extensions satisfies the \emph{pasting conditions} (or is \emph{holonomic}), if for any two directions $\uL,\uL' \in  \Gr(\uR,l)$ the partial extensions $s(\uL),s(\uL')$ coincide on the intersection $\uL\cap \uL'$. This means that, jets $\thetakp$ and $\thetakp'$ representing $s(\uL)$ resp. $s(\uL')$ have contact of order $k+1$ along $\uL\cap \uL$.
\end{defn} 
Using lemma \ref{lem:tangencyordk+1} it is straightforward to check that this definition is independent of the choice of representatives 
$\thetakp$ and $\thetakp'$. We call these the pasting conditions since they express when a section of partial extensions can be \virg{glued together} to form a full $k+1\St$-oder extension of $\thetak$. This last statement will actually be a consequence of the main result. 
\subsection{Infinitesimal pasting conditions} 
\label{subsec:infinitesimalpasting}
The pasting conditions can be reformulated as a system of $1\St$ order PDEs on sections of $\Dir$, which we call the \emph{infinitesimal pasting conditions}.  To write down this system of PDEs we introduce local coordinates that shall be used throughout the rest of this article.\par
On the base space $\Gr(\uR,l)$ we choose standard affine coordinates on Grassmannians: fix an element $\uL_0 \in \Gr(\uR,l)$, choose a basis 
\begin{equation}\label{eq:y's}
y_1,\ldots,y_d\in \uL_0^\circ
\end{equation}
of the annihilator $\uL_0^\circ$  and complement it to a basis of $\uR^*$ with covectors 
\begin{equation}\label{eq:x's}
x_1,\ldots,x_l\in \uR^*.
\end{equation}
Then for any plane $\uL\in  \Gr(\uR,l)$ transversal to the complement
\begin{equation}
\uL_{0}^{\text{compl}}:=\bigcap_{1\leq j \leq l} \ker{x_j}
\end{equation}
there are unique coefficients $A_{i,j}$ such that the covectors 
\begin{equation}
y_i- \sum_{j=1}^l A_{i,j}\,x_j \quad \textrm{ with }   \quad i=1,\ldots,d
\end{equation}
form a basis of the annihilator of $\uL$ and conversely any such choice of coefficients determines  such a plane. Hence the $A_{i,j}$ serve as local coordinates on $\Gr(\uR,l)$.\par
Coordinates on fibers: By lemma \ref{lem:tangencyordk+1} each fiber $\pJL$ of $\Dir$ is an affine quotient of $\Jkp_{\thetak}$ with underlying vector space $S^{k+1}\uL^{*}\otimes N$. Since $\Jkp_{\thetak}$ is affine over $\Skp$ we fix a jet
\begin{equation}\label{eq:basejet}
\theta_{k+1,\text{orig}} \in \Jkp_{\thetak}
\end{equation} 
as the \virg{origin} and identify $\Jkp_{\thetak}$ with the vector space $\Skp$ and hence $\pJL$ with $S^{k+1}\uL^{*}\otimes N$. Since each $\uL$ transversal to $\uL_{0}^{\text{compl}}$ is further identified with $\uL_{0}$ by the natural projection $\uR=\uL_{0}\oplus \uL_{0}^{\text{compl}}\to \uL_{0}$, we obtain an identification of  $S^{k+1}\uL^{*}\otimes N$ with $S^{k+1}\uL_{0}^{*}\otimes N$ and hence an identification $\pJL\cong S^{k+1}\uL_{0}^{*}\otimes N$. So choosing a basis 
\begin{equation}\label{eq:e's}
e_1,\ldots,e_m
\end{equation} 
of $N$, each point in $\pJ$ above our chart on $\Gr(\uR,l)$ is specified by its base coordinates $A_{i,j}$ plus the coefficients $v_{\lambda}^{h}$ of a homogeneous polynomial
\begin{equation}
\sum_{\lambda, h}v_{\lambda}^{h}\,x^{\lambda}\otimes e_{h}
\end{equation}
where $\lambda=(\lambda_{1},\ldots,\lambda_{l})\in\mathbb{N}^{l}$ denotes a multiindex of length $|\lambda|=k+1$ and $x^{\lambda}=x_{1}^{\lambda_{1}}\cdot \ldots \cdot x_{l}^{\lambda_{l}}$. \par
With these local coordinates
\begin{equation}\label{eq:localcoordinates first}
A_{i,j},\;v_{\lambda}^{h}
\end{equation}
on the total space of $\Dir$, a local section of $\Dir$ is given by functions 
\begin{equation}
v_{\lambda}^{h}\left( A \right)
\end{equation} 
where $A$ is short for all the variables $A_{i,j}$. Such a section satisfies the non-infinitesimal pasting conditions from definition \ref{defn:pasting} iff, for any two planes $\uL,\uL'\in \Gr(\uR,l)$ with coordinates $A, A'$ we have
\begin{equation}\label{globalpasting1}
\sum_{\lambda, h}v_{\lambda}^{h}(A)\, x^{\lambda}\otimes e_{h}=\sum_{\lambda, h}v_{\lambda}^{h}(A')\,x^{\lambda}\otimes e_{h}
\end{equation} 
whenever $x=(x_{1},\ldots, x_{l})$ satisfies
\begin{equation}\label{globalpasting2}
\sum_{j} A_{i,j}\, x_{j}= \sum_{j}A'_{i,j}\, x_{j}
\end{equation}
for all $i=1,\ldots,d$. \par
To derive the infinitesimal pasting conditions from \ref{globalpasting1}, \ref{globalpasting2} we fix $\uL\in \Gr(\uR,l)$ with coordinates $A$ and consider two continuos perturbations of $\uL$: one perturbation changing entry $A_{i,j}$ of matrix $A$ to  $A_{i,j}+t$, with $t$ a perturbation parameter, and leaving the other entries fixed. The other perturbation changing entry $A_{i,j'}$ to $A_{i,j'}+s$ with parameter $s$ and leaving all other entries unperturbed. Here $i,j,j'$ are fixed indices. We write the perturbed matrices as
\begin{align}
A+t1_{i,j} \label{perturbation with t}\\ 
A+s1_{i,j'}. \label{perturbation with s}
\end{align}
For a section of $\Dir$ that satisfies the pasting conditions, we substitute $A$ with  \ref{perturbation with t} and $A'$ with  \ref{perturbation with s} in \ref{globalpasting1} to obtain
\begin{equation}\label{globalpasting1special}
\sum_{\lambda}v_{\lambda}^{h}(A+t1_{i,j})\,x^{\lambda}\otimes e_{h}=\sum_{\lambda}v_{\lambda}^{h}(A+s1_{i,j'})\,x^{\lambda}\otimes e_{h}
\end{equation} 
whenever $x=(x_{1},\ldots, x_{l})$ satisfies
\begin{equation}\label{globalpasting2special}
t x_{j}= s x_{j'}
\end{equation}
according to \ref{globalpasting2}.
Taking the total differential of both sides of equations \ref{globalpasting1special} and \ref{globalpasting2special}  (where the variables are $s,t,x$ while $A$ is assumed fixed, i.e. $dA=0$)  we obtain
\begin{multline}\label{globalpasting1specialder}
\sum_{\lambda}\partial_{A_{i,j}}v_{\lambda}^{h}(A +t1_{i,j})\, x^{\lambda}\de t\otimes e_{h} + \sum_{\lambda, \iota}v_{\lambda}^{h}( A+t1_{i,j})\,\lambda_{\iota}x^{\lambda-1_{\iota}}\de x_{\iota}\otimes e_{h}=\\
\sum_{\lambda}\partial_{A_{i,j'}}v_{\lambda}^{h}\left( A+s1_{i,j'} \right)x^{\lambda}\de s\otimes e_{h} + \sum_{\lambda, \iota}v_{\lambda}^{h}\left( A+s1_{i,j'} \right)\lambda_{\iota}x^{\lambda-1_{\iota}}\de x_{\iota}\otimes e_{h}
\end{multline} 
from \ref{globalpasting1special}, while from \ref{globalpasting2special} we obtain
\begin{equation}\label{globalpasting2specialder}
x_{j}\de t+t\de x_{j}= x_{j'}\de s+s\de x_{j'}.
\end{equation}
Now set $t=s=0$, so \ref{globalpasting1special} \ref{globalpasting2special} are trivially satisfied while \ref{globalpasting1specialder} becomes
\begin{equation}\label{globalpasting1specialder_st0}
\sum_{\lambda}\partial_{A_{i,j}}v_{\lambda}^{h}\left( A\right)x^{\lambda}\de t \otimes e_{h} =
\sum_{\lambda}\partial_{A_{i,j'}}v_{\lambda}^{h}\left( A \right)x^{\lambda}\de s \otimes e_{h}
\end{equation}
after canceling equal terms. Equation \ref{globalpasting2specialder} becomes
\begin{equation}\label{globalpasting2specialder_st0}
x_{j}\de t= x_{j'}\de s.
\end{equation}
We may multiply both sides of equation \ref{globalpasting1specialder_st0} with $x_{j}$ and substitute $x_{j}\de t$ with $x_{j'}\de s$ by \ref{globalpasting2specialder_st0}  to find
\begin{equation}\label{infintesimalpasting}
\sum_{\lambda}\partial_{A_{i,j}}v_{\lambda}^{h}\left( A\right)x^{\lambda+1_{j'}}\otimes e_{h}=
\sum_{\lambda}\partial_{A_{i,j'}}v_{\lambda}^{h}\left( A \right)x^{\lambda+1_{j}}\otimes e_{h}
\end{equation}
where we have canceled $\de s$.
Since equations \ref{infintesimalpasting} hold for arbitrary values of $x$ we can equate the coefficients on both sides to find that, in local coordinates, a section $v_{\lambda}^{h}\left( A \right)$ that satisfies pasting conditions \ref{globalpasting1}, \ref{globalpasting2}, also satisfies the \emph{infinitesimal pasting conditions}
\begin{align}
\partial_{A_{i,j}}v_{\lambda}^{h}&=\partial_{A_{i,j'}}v_{\lambda'}^{h}\quad \text{whenever} \quad \lambda-1_{j}=\lambda'-1_{j'} ,\label{infpaste1}\\
\partial_{A_{i,j}}v_{\lambda}^{h}&=0 \quad \text{whenever} \quad \lambda_{j}=0  \label{infpaste2} .
\end{align}
Observe that when $l=1$ these conditions are trivially satisfied, so the equations are \virg{empty}.
\begin{rem}
If one considers perturbations $A+t1_{i,j}$, $A+s1_{i',j'}$ with different indices $i\neq i'$ one finds again equations \ref{infpaste2}. In fact, the prolongation theorems \ref{thm:mainfirstintro} and \ref{thm:mainsecondintro} will establish that all possible differential consequences of the non-infinitesimal pasting conditions \ref{globalpasting1}, \ref{globalpasting2} coincide with the differential consequences of the infinitesimal pasting conditions \ref{infpaste1}, \ref{infpaste2}.
\end{rem}
\subsection{Integral flags and the double fibration structure}
A further ingredient of the main theorem is the space of partial flags of integral elements which we introduce here. This space is also the starting point for constructing the tower of prolongations and thereby for proving the main theorem.
\begin{defn}
A pair $(L,R)$ of subspaces $L\subset R\subset \Ctheta$ with $R$ an R-plane and $L$ of dimension $l$ will be called a \emph{$(l,n)$-dimensional flag of horizontal integral elements}. The space of all such integral flags is denoted with
\begin{equation}
\Iln:=\left \{(L,R)\,\middle|\, L\in \Il,\; R\in I^n_{\thetak},\; L \subset R\right \} .
\end{equation}

\begin{rem}
By established terminology it would be correct to call these \emph{partial} flags. We omit the adjective to simplify the terminology.
\end{rem}

\end{defn}
The space of integral flags is naturally fibered in two ways: one projection forgets the smaller integral element $L$ and remebers only $R$. Since $R$ is an R-plane corresponding to some jet of order $k+1$ we write this projection as:
\begin{align}
\pr_{n}:\Iln &\to \Jkp_{\thetak} \\
 (L, R_{\thetakp}) & \mapsto \thetakp .
 \end{align}
The second projection forgets $R$ and is hence of the form
\begin{align}
\pr_{l}:\Iln &\to \Il \\
 (L,R_{\thetakp})&\mapsto L .
\end{align}
We picture both of these as a double fibration
\begin{equation}
\xymatrix{
& \Iln \ar[dr]^{\pr_{l}}\ar[dl]_{\pr_{n}}& \\
\Jkp_{\thetak} && \Il .
}
\end{equation}
This double fibration gives rise to a natural distribution on $\Iln$. Recall that for a fiber bundle $\pi:A\to B$ the \emph{vertical distribution} $V\pi$ on the total space $A$ consists of all vectors tangent to the fibers.
\begin{defn}
The sum of the two vertical distributions associated to the projections $\pr_{l}$ and $\pr_{n}$ defines a distribution 
\begin{equation}
\Dub:=V\pr_{l}+V\pr_{n} 
\end{equation} 
on $\Iln$ which we call the \emph{flag distribution} on the space of integral flags.
\end{defn}
\subsection{Statement of the main results}
Before we state the main result we recall the notion of \emph{prolongation of an exterior differential system with independence conditions}. We shall only need the case where the independence conditions are given by transversality conditions with respect to a bundle projection $\pi:M\to N$, and where the exterior differential system on $M$ is a distribution $\mathcal{E}$ (i.e. a Pfaffian system). We refer to \cite{MR1083148} for the general definition.\par
To proceed we need to recall the notions of relative distribution and lift of a (relative) distribution.
\begin{defn}\label{defn:relative distr lift}
Given a fiber bundle $f:A\to B$, a \emph{relative distribution $\mathcal{D}$ along $f$} is vector sub-bundle of the pullback of the tangent bundle $TB$ to $A$. In other words, a relative distribution attaches to every point $a\in A$ a tangent plane $\mathcal{D}_{a}\subset T_{f(a)}B$ in a smooth way.  Any relative distribution $\mathcal{D}$ along $f$, can be \emph{lifted} to a non-relative distribution $f^{-1}\mathcal{D}$ on $A$  by defining $(f^{-1}\mathcal{D})_{a}:=(Tf)^{-1}(\mathcal{D}_{a})$. 
\end{defn}
\begin{rem}
Lifting relative distributions induces a canonical correspondence between relative distributions along $f$ and distributions on $A$ containing the vertical distribution $Vf$. Note also that every non-relative distribution on $B$ can be seen as a relative distribution along $f$ in an obvious way. 
\end{rem}
Returning to the notion of prolongation of a distribution $(M,\mathcal{E})$, one defines the manifold $M^{(1)}$ to consist of all $(\dim N)$-dimensional $\pi$-horizontal integral elements of $\mathcal{E}$. The prolonged distribution $\mathcal{E}^{(1)}$ on $M^{(1)}$ is then defined to be the lift of the \emph{tautological relative distribution} along the natural projection $\pi^{(1)}:M^{(1)}\to M$. The tautological relative distribution by definition attaches to each $S\in M^{(1)}$ the subspace $S\subset T_{\pi^{(1)}S}M$. Since $M^{(1)}$ is still a bundle over $N$ via $\pi\circ \pi^{(1)}$ we can iterate this construction an define the second prolongation etc.\par
\begin{thm}[Main theorem]\label{thm:main}
The $k-1\St$ prolongation of the system of infinitesimal pasting conditions is the polar distribution on $\Il$. The $k\Th$ prolongation is the space of integral flags with its flag distribution. Moreover, when $l>1$ the $k+1\St$ prolongation is an involutive distribution whose maximal integral submanifolds are in one-to-one correspondence with jets of order $k+1$ prolonging $\thetak$. When $l=1$ the pasting conditions are empty and so $\Il=J^{k}(\Dir)$ and $\Iln=J^{k+1}(\Dir)$ while the polar and flag distributions are the Cartan distributions on $J^{k}(\Dir)$ resp. $J^{k+1}(\Dir)$.
\end{thm}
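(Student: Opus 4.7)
The strategy is to construct the prolongation tower from the top down, starting with the double-fibration distribution $\Dub$ on $\Iln$, and produce a sequence of submersions
\[
\Iln \twoheadrightarrow M^{k} \twoheadrightarrow \cdots \twoheadrightarrow M^{1} \twoheadrightarrow M^{0} = \pJ \twoheadrightarrow \Gr(\uR,l)
\]
carrying distinguished distributions $\mathcal{E}^{r}$ on each $M^{r}$, then verify from the bottom up that this coincides with the prolongation tower of the infinitesimal pasting conditions. Each intermediate $M^{r}$ is defined as the bundle over $\Gr(\uR,l)$ interpolating between $M^{0} = \pJ$, which by Lemma \ref{lem:tangencyordk+1} records only the pure $\uL$-partial-derivatives of $\thetakp$, and $M^{k} \cong \Il$, which records enough of $\thetakp$ to reconstruct the integral $l$-plane lifting $\uL$ into $\Ctheta$; the fiber of $M^{r}$ over $\uL$ is the appropriate step of the filtration of $\Jkp_{\thetak}$ induced, via its affine $\Skp$-structure, by the vanishing ideal of $\uL \subset \uR$. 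The identification $M^{k}\cong \Il$ follows because integrality of an $l$-plane projecting to $\uL$ is automatic from the metasymplectic formulas \ref{metasymplecticnontric}, and $\Iln$ sits one step above $M^{k}$ by further remembering the surrounding R-plane $R_{\thetakp}$.

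Next each $M^{r}$ is equipped with a distribution $\mathcal{E}^{r}$ obtained as the pushforward of $\Dub$ along $\Iln \to M^{r}$ (the lift of a relative distribution, in the sense of Definition \ref{defn:relative distr lift}), and I would verify the two boundary identifications. On $M^{k}$ the distribution $\mathcal{E}^{k}$ coincides with the polar distribution $\P$: any $\dot{L}$ arising from a vector in $\Dub = V\pr_{l} + V\pr_{n}$ is the infinitesimal motion of $L$ inside some R-plane $R \supset L$, so $\dot{L}(L) \subset R \subset L^{\perp}$, matching description \ref{eq:descrpolar}; conversely an arbitrary $\dot{L} \in \P_{L}$ can be realised this way by the R-plane construction used in the proof of the lemma at the end of Section \ref{subsec:integral elem}. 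On $M^{1}$, written in the coordinates $A_{i,j}, v_{\lambda}^{h}$ of Section \ref{subsec:infinitesimalpasting}, a direct differentiation shows that the pushforward of $\Dub$ is precisely cut out by the linear relations \ref{infpaste1}, \ref{infpaste2}.

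The central step, and what I expect to be the main obstacle, is to show that $(M^{r+1}, \mathcal{E}^{r+1})$ is the first prolongation of $(M^{r}, \mathcal{E}^{r})$ for every $r$: the $\dim\Gr(\uR,l)$-dimensional horizontal integral elements of $\mathcal{E}^{r}$ at a point of $M^{r}$ must be parametrized exactly by the fiber of $M^{r+1} \to M^{r}$, and the tautological relative distribution on $M^{r+1}$ must lift to $\mathcal{E}^{r+1}$. The difficulty is that $M^{r+1}$ has been defined via the ad hoc height filtration while prolongation is intrinsic. I would attack this via moving frames adapted to the splitting $\Ctheta = L \oplus L^{\text{comp}} \oplus (S^{k}\uR^{*} \otimes N)$ from Section \ref{subsec:integral elem}, computing Lie brackets of lifts of the $\partial_{A_{i,j}}$- and $\partial_{v_{\lambda}^{h}}$-type fields against the metasymplectic structure \ref{metasymplecticnontric}; the symmetric-tensor constraint $\lambda - 1_{j} = \lambda' - 1_{j'}$ of \ref{infpaste1} should match the symmetry built into $\Skp$, identifying the fiber coordinates on $M^{r+1}$ with the formal $r$-jet prolongations of $v_{\lambda}^{h}$ that are compatible with the pasting relations.

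Finally, for $l > 1$, one more prolongation beyond $\Iln$ reduces the distribution to $V\pr_{n}$, since no independent new horizontal integral directions remain once both $L$ and $R$ are allowed to vary freely; maximal integral leaves therefore project diffeomorphically onto the connected compact Grassmannian fibers $\Gr(R_{\thetakp}, l)$ of $\pr_{n}$, giving the asserted bijection with jets $\thetakp \in \Jkp_{\thetak}$ extending $\thetak$. The case $l = 1$ is automatic: any two distinct lines in $\Gr(\uR,1)$ meet only in $0$, so Definition \ref{defn:pasting} imposes no constraints, $\pJ$ coincides with the total space of the classical jet bundle viewed over $\Gr(\uR,1) = \mathbb{P}(\uR)$, and the whole tower collapses to the standard Cartan-distribution tower on $J^{\bullet}(\Dir)$ as in \cite{ThesisMichi, BaechtMor}.
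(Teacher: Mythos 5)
Your overall architecture --- building the tower top--down from $(\Iln,\Dub)$, using the filtration of $\Skp$ induced by the vanishing ideal of $\uL$ to define the intermediate quotients $M^{r}$, and matching the bottom level against the pasting conditions --- is the same as the paper's. The gap lies in the step where you transfer the distribution down the tower, and it is not a technicality: it is where the main content of the proof sits.

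The pointwise image of $\Dub=V\pr_{l}+V\pr_{n}$ under $\Iln\to M^{r}$ is not the right distribution on $M^{r}$. In the product trivialization $\Iln\cong\Gr(\uR,l)\times\Jkp_{\thetak}$ one has $\Dub_{(L,R)}=\Hom(\uL,\uR/\uL)\oplus\muL{1}$, so its image in $T M^{r}\cong\Hom(\uL,\uR/\uL)\oplus\bigl(\Skp/\muL{k+1-r}\bigr)$ is $\Hom(\uL,\uR/\uL)\oplus 0$: a horizontal complement containing none of the vertical directions of $M^{r}\to M^{r-1}$. Since a prolongation is the lift of the tautological relative distribution, it always contains the vertical distribution of $M^{(1)}\to M$; hence this image cannot be any prolongation. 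The symptom already appears on $M^{k}=\Il$: a vector of $\Dub$ does project to a motion of $L$ inside the fixed R-plane $R$, hence into $\P_{L}$, but the converse you assert is false --- a purely vertical polar vector $\dot L=\restr{\de f}{L}$ with $f\in\muL{2}\setminus\muL{1}$ is not realizable as a motion of $L$ inside any single R-plane containing $L$ (that would force its vertical component to vanish), and the lemma you cite characterizes $T_{L}\Il$, not $\P_{L}$. What actually descends to $\P$ is the first \emph{derived} distribution $[\Dub,\Dub]$, and more generally the distribution on $M^{k+1-p}$ is the reduction of the $p\Th$ derived system $\F^{p}$ of $\Dub$. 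Two things must then be proved that your sketch assumes: (i) the characteristic symmetries of $\F^{p}$ are exactly the vertical distribution $\V^{p}$ being quotiented, so that $\F^{p}$ genuinely descends --- in the paper this is the commutator computation $[V^{h}_{\delta,\lambda},D_{i,j}]=V^{h}_{\delta-1_{i},\lambda+1_{j}}$ in an adapted frame; and (ii) the surjectivity half of the prolongation claim, that \emph{every} horizontal maximal integral element of $\uF{q-1}$ arises from a point of $M^{q}$, together with the $l>1$ stabilization --- both reduce to the explicit integrality conditions on the frame coefficients, and it is precisely the extra condition $C^{\delta,\lambda}_{i,j,h}=0$ for $\lambda_{j}=0$, present only when $l>1$, that kills all integral directions above $\Iln$ and yields the dichotomy you state without argument. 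Your moving-frame paragraph points in the right direction, but as written the proposal postulates rather than proves the facts on which the theorem rests.
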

\begin{rem}
In local coordinates an exterior differential system is a system of PDE's, while prolonging amounts to taking total derivatives of the equations and adding them to the system. For this reason we occasionally refer to prolongation as \virg{adding differential consequences} to a system of PDEs.
\end{rem}
\section{Constructing the tower of fibrations}\label{sect:constructing tower}
In this section we exhibit a natural chain of involutive distributions 
$
\V^{0}\subset \V^{1}\subset \ldots \subset \V^{k+2}
$
on the space of integral flags $\Iln$. Their leaf spaces then yield the tower of fiber bundles 
\begin{equation}\label{eq: tower of prolongations 1}
\Iln \to M^{k} \to \ldots \to M^0 \to M^{-1}.
\end{equation}
Having done that, we recognize $M^{k}$ as $\Il$, $M^{0}$ as $\pJ$ and $M^{-1}$ as $\Gr(\uR,l)$. In section \ref{sec:supplying with distr} we then show how each $M^{q}$, $q>1$ is equipped with a natural distribution. 
\subsection{Internal structure of the tangent space $T\Iln$}
Since any integral element $L\in \Il$ is transversal to $\pi_{k,0}:J^{k}\to J^{0}$ we may project it down to $\uR\subset T_{\theta_{0}}E$ to obtain a subspace we denote with $\uL \in \Gr(\uR,l)$ (This projection also induces a canonical isomorphism $L\cong \uL$ which we shall use implicitly). Hence $\Il$ is naturally fibered over $\Gr(\uR,l)$:
\begin{align}
\Il & \to \Gr(\uR,l) \\
L & \mapsto \uL=T\pi_{k,0}(L ).
\end{align}
Using this projection we note the following important decomposition of the space of integral flags.
\begin{lem}\label{lem:product_decomp_flags} 
The map 
\begin{align}
\Iln & \to \Gr(\uR,l)\times \Jkp_{\thetak}\\
(L,R_{\thetakp}) & \mapsto (\uL,\thetakp)
\end{align}
is a canonical diffeomorphism of manifolds.
\end{lem}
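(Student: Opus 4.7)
The plan is to exhibit an explicit smooth inverse. Given $(\uL,\thetakp)\in \Gr(\uR,l)\times \Jkp_{\thetak}$, the jet $\thetakp$ determines the R-plane $R_{\thetakp}\subset \Ctheta$, and because $R_{\thetakp}$ is horizontal of maximal dimension $n$ the restriction $T\pi_{k,0}|_{R_{\thetakp}}:R_{\thetakp}\to \uR$ is a linear isomorphism. Setting $L:=\left(T\pi_{k,0}|_{R_{\thetakp}}\right)^{-1}(\uL)$ produces an $l$-dimensional subspace of $R_{\thetakp}$ which is $\Omega$-integral (being contained in the integral element $R_{\thetakp}$) and horizontal (being mapped isomorphically onto $\uL\subset \uR$). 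So $(L,R_{\thetakp})\in \Iln$, giving a well-defined candidate inverse $(\uL,\thetakp)\mapsto (L,R_{\thetakp})$.

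I would then verify that the two compositions are the identity. Starting from $(L,R_{\thetakp})\in \Iln$ and forming $(T\pi_{k,0}(L),\thetakp)$, the inclusion $L\subset R_{\thetakp}$ together with injectivity of $T\pi_{k,0}|_{R_{\thetakp}}$ yields $\left(T\pi_{k,0}|_{R_{\thetakp}}\right)^{-1}(T\pi_{k,0}(L))=L$; the other composition is immediate by construction. For smoothness of the forward map, the first component is the smooth Grassmannian map induced by $T\pi_{k,0}$, restricted to $\Il\subset \Gr(\Ctheta,l)$, while the second component is $\pr_{n}$. For smoothness of the inverse, the section $\thetakp\mapsto R_{\thetakp}$ into the Grassmannian bundle over $\Jkp_{\thetak}$ is smooth, and the preimage of $\uL$ under the resulting smoothly varying linear isomorphism depends smoothly on both arguments.

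The whole argument is essentially formal once one uses horizontality to turn $T\pi_{k,0}|_{R_{\thetakp}}$ into an isomorphism, so I do not foresee a substantial obstacle. The one subtle point worth emphasising is that horizontality of $L$ with respect to $\pi_{k,k-1}$ implies horizontality with respect to $\pi_{k,0}$, as recorded in Section \ref{subs:convent jets}, which is precisely what lets the forward and inverse procedures be mutually inverse and lets $\uL$ be genuinely $l$-dimensional rather than a lower-dimensional projection.
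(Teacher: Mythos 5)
Your proof is correct and is essentially the paper's argument: the paper also proves the lemma by exhibiting the explicit inverse $(\uL,\thetakp)\mapsto \bigl( R_{\thetakp}\cap (T_{\thetak}\pi_{k,0})^{-1}(\uL),\, R_{\thetakp}\bigr)$, which coincides with your $L=\bigl(T\pi_{k,0}|_{R_{\thetakp}}\bigr)^{-1}(\uL)$. You simply spell out the verification of the two compositions and the smoothness, which the paper leaves implicit.
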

\begin{proof} 
The inverse can be described by 
\begin{equation}
(\uL,\thetakp)\mapsto \left( R_{\thetakp}\cap\left(T_{\thetak}\pi_{k,0}\right)^{-1}(\uL)\, , \,R_{\thetakp}\right).
\end{equation}
\end{proof}
We henceforth use this identification $\Iln = \Gr(\uR,l)\times \Jkp_{\thetak}$ without explicit mention. 
It immediately exposes the following \virg{internal structure} on tangent spaces of $\Iln$.
\begin{cor}
The tangent space $T_{(L,R)}\Iln$ at $(L,R)\in \Iln$ is canonically isomorphic to 
\begin{equation}
\Hom( \uL, \uR/\uL ) \oplus \left( \Skp \right).
\end{equation} 
\end{cor}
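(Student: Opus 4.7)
The plan is to combine the diffeomorphism from Lemma \ref{lem:product_decomp_flags} with two standard facts already recalled earlier in the paper, namely the tangent space of a Grassmannian and the affine structure on the fiber $\Jkp_{\thetak}$.

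First, differentiating the diffeomorphism $\Iln \cong \Gr(\uR,l)\times \Jkp_{\thetak}$ at the point $(L,R) = (L,R_{\thetakp})$ gives a canonical splitting
\begin{equation}
T_{(L,R)}\Iln \;\cong\; T_{\uL}\Gr(\uR,l)\; \oplus\; T_{\thetakp}\Jkp_{\thetak}.
\end{equation}
It remains to identify each summand canonically with the asserted object.

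For the first summand, I would invoke the canonical identification of the tangent space of a Grassmannian with the Hom-space, already used in the statement preceding equation (tangentvectortoGr) for the larger Grassmannian $\Gr(\Ctheta,l)$. Applied to $\Gr(\uR,l)$ at the point $\uL$ this yields $T_{\uL}\Gr(\uR,l)\cong\Hom(\uL,\uR/\uL)$. For the second summand, recall from the material in subsection \ref{subs:convent jets} that the fiber $\Jkp_{\thetak}$ is an affine space over the vector space $\Skp$, so at any point $\thetakp$ the tangent space is canonically identified with the underlying vector space $\Skp$. Substituting these two identifications into the displayed splitting gives the desired isomorphism
\begin{equation}
T_{(L,R)}\Iln \;\cong\; \Hom(\uL,\uR/\uL)\oplus\Skp .
\end{equation}

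There is essentially no obstacle: the corollary is a bookkeeping consequence of the lemma and of two canonical identifications that are standard and already in use in the paper. The only thing worth checking is that both identifications are \emph{canonical} (basis-free), so that the resulting isomorphism of $T_{(L,R)}\Iln$ does not depend on auxiliary choices; this is true because the Hom-description of tangent vectors to a Grassmannian and the affine-space description of $\Jkp_{\thetak}$ are both intrinsic.
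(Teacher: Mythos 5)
Your proof is correct and follows exactly the paper's own argument: the paper likewise derives the splitting from the product decomposition of Lemma \ref{lem:product_decomp_flags} and identifies the two factors via the canonical Hom-description of $T_{\uL}\Gr(\uR,l)$ and the affine structure of $\Jkp_{\thetak}$ over $\Skp$. You have merely spelled out the two standard identifications that the paper leaves implicit.
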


\begin{proof}
The two summands correspond precisely to the tangent spaces of the components $\Gr(\uR,l)\times \Jkp_{\thetak}$.
\end{proof}

A generic vector in $\Hom( \uL, \uR/\uL ) \oplus \left( \Skp \right)$ will henceforth be denoted with $h\oplus f$, where $h\in \Hom( \uL, \uR/\uL )$ and $f\in \Skp$.

%
%

\subsubsection{A filtration on homogeneous polynomials}
The subspace $\uL\subset \uR$ associated to an integral flag $(L,R)$ gives rise to a filtration on the second component $\Skp$ of the tangent space of $\Iln$ at $(L,R)$:
\begin{defn}\label{defn:filtr_poly}
For $p=0,1,\ldots,k+2$ define 
$
\muL{p}
$
to be the vector subspace of $\Skp$ consisting of all homogeneous polynomials that vanish after taking $p$ derivatives in direction of $\uL$.  Equivalently, $\muL{p}$ consists of all symmetric $k+1$-multilinear forms on $\uR$ that vanish when inserting $p$ elements of $\uL$. 
\end{defn}
These subspaces form a natural filtration in $\Skp$ depending on $\uL\in \Gr(\uR,l)$:
\begin{equation}\label{eq: tower of filtrations on polys}
\underbrace{\muL{0}}_{=0} \subset \muL{1} \subset \ldots \subset \muL{k+1} \subset \underbrace{\muL{k+2}}_{=\Skp} .
\end{equation}
A basis of $\muL{p}$ may be constructed as follows: fix a basis $y_1,\ldots,y_d$ of $\uL^\circ$ and complement it with forms $x_1,\ldots,x_l$ to a basis of $\uR^*$. Denote symmetric monomials of these basic forms with
\begin{equation}
y^\delta x^\lambda := y_1^{\delta_1}\cdots y_d^{\delta_d} x_1^{\lambda_1}\cdots x_l^{\lambda_l}
\end{equation}
where $\delta=(\delta_1,\ldots,\delta_d)\in \mathbb{N}^d$ and $\lambda=(\lambda_1,\ldots,\lambda_l)\in \mathbb{N}^l$ are multi indices. Let $e_1,\ldots,e_m$ be the basis \ref{eq:e's} of $N$.

\begin{lem}\label{lem:basis_filt_poly}
$\muL{p}$ is generated by all symmetric tensors $ y^\delta x^\lambda \otimes e_h$ which are of degree less than $p$ in the $x$'s. More formally 
\begin{equation}
\muL{p}=\left \langle  y^\delta x^\lambda \otimes e_h  \,\middle|\,  \left | \delta \right |+ \left | \lambda \right | = k + 1 ,  | \lambda  | < p, h=1,\ldots, m \right \rangle.
\end{equation}
\end{lem}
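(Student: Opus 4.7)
The plan is to verify both inclusions by working directly with the monomial basis. First I would check the inclusion $\langle y^\delta x^\lambda \otimes e_h : |\lambda|<p\rangle \subseteq \muL{p}$ by evaluating such monomials on a tuple $v_1,\ldots,v_{k+1}\in \uR$ with $v_1,\ldots,v_p\in \uL$. Viewed as a symmetric multilinear form, $y^\delta x^\lambda$ evaluated on this tuple is (up to a combinatorial constant) a sum over pairings of the factors $y_i,x_j$ with the vectors $v_s$. Since $y_i|_{\uL}=0$, a pairing contributes zero unless each of $v_1,\ldots,v_p$ is matched with an $x$-factor. But the monomial contains only $|\lambda|<p$ such factors, so no pairing contributes. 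Hence the form vanishes on $p$ vectors from $\uL$, and so $y^\delta x^\lambda\otimes e_h\in \muL{p}$.

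For the reverse inclusion, I would introduce the dual basis $\{f_1,\ldots,f_d,g_1,\ldots,g_l\}\subset \uR$ of $\{y_1,\ldots,y_d,x_1,\ldots,x_l\}$, so that $g_1,\ldots,g_l$ is a basis of $\uL$ while $f_1,\ldots,f_d$ span a complement. Given $P\in \Skp$, expand uniquely as $P=\sum c_{\delta,\lambda,h}\, y^\delta x^\lambda \otimes e_h$. For fixed $(\delta_0,\lambda_0,h_0)$ with $|\lambda_0|\geq p$, evaluate $P$ on the tuple obtained by listing $\lambda_0(j)$ copies of $g_j$ and $\delta_0(i)$ copies of $f_i$ (in any order with at least $p$ of the $g_j$'s placed in the first $p$ slots, which is possible because $|\lambda_0|\geq p$). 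By orthogonality of the dual bases, any monomial $y^\delta x^\lambda$ with $(\delta,\lambda)\neq(\delta_0,\lambda_0)$ contributes $0$ in the $e_h$-component, while $(\delta_0,\lambda_0)$ contributes a non-zero multinomial coefficient. If $P\in \muL{p}$, this evaluation vanishes, forcing $c_{\delta_0,\lambda_0,h_0}=0$. Ranging over all such triples shows $P$ lies in the span of monomials with $|\lambda|<p$.

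Combining the two inclusions yields the claimed description of $\muL{p}$. The only real bookkeeping is getting the combinatorial factor right for the dual-basis evaluation; once one recognizes that the monomials $y^\delta x^\lambda\otimes e_h$ are mutually orthogonal under the pairing $\langle y^\delta x^\lambda,\, f^{\delta'}g^{\lambda'}\rangle$, the vanishing of the coefficients is automatic. I do not expect any real obstacle here: the argument is essentially linear algebra on symmetric powers of $\uR^*$ together with the defining property of the annihilator $\uL^\circ$.
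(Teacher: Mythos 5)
Your argument is correct and is just a careful write‑up of what the paper dismisses as a straightforward verification: the containment of the monomials with $|\lambda|<p$ follows because every $y_i$ annihilates $\uL$, and the reverse containment follows from evaluating against the dual basis $f_i,g_j$ of $\uR$, exactly as you do (the paper phrases the check via polynomial maps rather than multilinear forms, but the definition declares these equivalent). No gaps.
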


\begin{proof}
This follows straightforwardly from interpreting such symmetric tensors as polynomial maps. 
\end{proof}

Denoting with 
\begin{equation}
q:=k+1-p
\end{equation} the \emph{complementary degree} to $p$, we may describe $\muL{p}$ as all symmetric tensors of degree at least $q$ in the $y$'s.

\begin{cor}
\begin{align}
\muL{1}&=S^{k+1} \uL^\circ\otimes N \\
\muL{k+1}&= \text{polynomials vanishing on } \uL
\end{align}
\end{cor}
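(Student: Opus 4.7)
The corollary asserts two special cases of the filtration: $\muL{1}=S^{k+1}\uL^\circ\otimes N$ at the bottom, and $\muL{k+1}$ equal to polynomials vanishing on $\uL$ near the top. Both are routine translations of the two equivalent descriptions appearing in Definition \ref{defn:filtr_poly}, combined with Lemma \ref{lem:basis_filt_poly}. I would treat the two claims separately.

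For $\muL{1}$: the plan is to apply Lemma \ref{lem:basis_filt_poly} with $p=1$. The condition $|\lambda|<1$ forces $\lambda=0$, so the asserted basis reduces to the tensors $y^\delta\otimes e_h$ with $|\delta|=k+1$ and $h=1,\ldots,m$. Since $y_{1},\ldots,y_{d}$ is a basis of $\uL^\circ$ by construction and the $e_h$ form a basis of $N$, these span $S^{k+1}\uL^\circ\otimes N$. Equivalently one can argue directly from the derivative characterisation: vanishing of $\partial_v f$ for every $v\in\uL$ forces $f$ to be independent of the coordinates dual to $\uL$, hence a polynomial in the $y_i$ alone, yielding the same conclusion.

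For $\muL{k+1}$: I would use the multilinear-form description in Definition \ref{defn:filtr_poly}, which characterises $\muL{p}$ as those symmetric $(k+1)$-forms on $\uR$ that vanish whenever $p$ arguments are drawn from $\uL$. With $p=k+1$ this means vanishing on every $(k+1)$-tuple of elements of $\uL$, which, by the usual polarisation identity, is equivalent to the condition that the associated homogeneous polynomial of degree $k+1$ restricts to zero on $\uL$. As a cross-check one can invoke Lemma \ref{lem:basis_filt_poly} with $p=k+1$: the spanning monomials $y^\delta x^\lambda\otimes e_h$ with $|\lambda|<k+1$ are exactly those containing at least one factor $y_i$, and such monomials vanish on $\uL$ because $y_i|_{\uL}=0$; conversely any polynomial vanishing on $\uL$ lies in the ideal generated by $y_1,\ldots,y_d$, hence in the span of those monomials.

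I do not expect any genuine obstacle in either case, since the combinatorial heart of the statement has already been packaged into Lemma \ref{lem:basis_filt_poly}; the corollary is simply the clean statement of the two boundary instances $p=1$ and $p=k+1$. The only minor point worth verifying carefully is the polarisation step in the second part, i.e. that vanishing of a symmetric multilinear form on the diagonal of $\uL^{k+1}$ is equivalent to vanishing on all of $\uL^{k+1}$; this is standard and relies on the characteristic-zero hypothesis recalled in Section \ref{subs:convent jets}.
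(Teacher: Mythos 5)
Your proposal is correct and follows exactly the route the paper intends: the corollary is stated without proof as an immediate consequence of Lemma \ref{lem:basis_filt_poly} specialised to $p=1$ and $p=k+1$, which is precisely what you do. Your additional remarks on the derivative characterisation and the polarisation step (valid in characteristic zero) are accurate but not needed beyond the lemma.
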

%
%
\subsection{Higher vertical distributions on $\Iln$}
The filtration \ref{eq: tower of filtrations on polys} of $\Skp$ from the previous subsection induces a natural chain of distributions on the tangent spaces of $\Iln$. From these we will construct the tower of prolongations \ref{eq: tower of prolongations 1}.%
\begin{defn}\label{GenVertical}
For $p=0,\ldots,k+2$ define the $p\Th$ \emph{vertical distribution} $\V^p$ on $\Iln$ at a point $(L,R)\in \Iln$ as
\begin{equation}
\V^p_{(L,R)}:=\left \{0 \oplus f \in   \Hom(\uL, \uR/\uL) \oplus \left( \Skp \right)=T_{(L,R)}\Iln \,\middle|\,f\in \muL{p} \right \} .
\end{equation}
\end{defn}
It is clear from \ref{eq: tower of filtrations on polys} that 
\begin{equation}
\underbrace{\V^{0}}_{=0}\subset \V^{1}\subset \ldots \subset \V^{k+2}
\end{equation}
and the biggest vertical distribution $\V^{k+2}$ is just the vertical distribution with respect to the projection $\Iln \to \Gr(\uR,l)$. The terminology \emph{vertical} distribution stems from the fact that we will quotient $\Iln$ by these distributions to obtain the manifolds $M^{q}$ in the tower $
\Iln \to M^{k} \to \ldots \to M^0 \to M^{-1}
$
and hence the $\V^{p}$ are indeed vertical distributions.\par
The fact that we are allowed to quotient follows from the next
\begin{lem}\label{lem:verticals_involutive}
All higher vertical distributions $\V^{p}$ are involutive, their integral leaves are affine spaces and their spaces of leaves are manifolds.
\end{lem}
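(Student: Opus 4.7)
The plan is to exploit that $\V^p$ sits inside the already-involutive vertical distribution $V\pi_{\Gr}$ for the projection $\pi_{\Gr}:\Iln \cong \Gr(\uR,l)\times \Jkp_{\thetak} \to \Gr(\uR,l)$, and to reduce everything to a trivial statement about a parallel distribution on an affine space.

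First I would note that, by Definition \ref{GenVertical}, a vector in $\V^p$ has zero $\Hom$-component, hence $\V^p \subset V\pi_{\Gr}$. The leaves of $V\pi_{\Gr}$ are the fibers $F_{\uL_0}:=\{\uL_0\}\times \Jkp_{\thetak}$, each of which is an affine space over $\Skp$. Since any two sections $X_1,X_2$ of $\V^p$ are tangent to every $F_{\uL_0}$, their restrictions $X_i|_{F_{\uL_0}}$ are well-defined vector fields with values in the \emph{constant} subspace $\muL{p}_{\uL_0}\subset \Skp$. Because Lie bracket commutes with restriction to an integral submanifold of an enclosing involutive distribution, $[X_1,X_2]|_{F_{\uL_0}} = [X_1|_{F_{\uL_0}},X_2|_{F_{\uL_0}}]$, and on an affine space the bracket of two vector fields with values in a fixed subspace $V\subset \Skp$ lies again in $V$ (direct differentiation: if $Y_i(\thetakp)=f_i(\thetakp)\in V$ then $[Y_1,Y_2]=(Df_2)f_1-(Df_1)f_2\in V$). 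Thus $[X_1,X_2]\in \V^p$, proving involutivity.

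The integral leaves fall out at once: through $(\uL_0,\thetakp)$ the leaf is the parallel affine subspace $\{\uL_0\}\times(\thetakp+\muL{p}_{\uL_0})$ of $F_{\uL_0}$, which is closed and embedded in $\Iln$. For the quotient, I would verify that $\uL\mapsto \muL{p}$ defines a smooth vector subbundle $\mu^p$ of the trivial bundle $\Gr(\uR,l)\times \Skp$. Using the affine chart on $\Gr(\uR,l)$ from subsection \ref{subsec:infinitesimalpasting}, the forms $y_i-\sum_j A_{i,j}x_j$ spanning $\uL^\circ$ depend smoothly (polynomially) on the coordinates $A_{i,j}$, hence so do the monomials $y^\delta x^\lambda\otimes e_h$ with $|\lambda|<p$ appearing in the basis of $\muL{p}$ from Lemma \ref{lem:basis_filt_poly}. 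Since these give a smooth local frame of constant rank, $\mu^p$ is a smooth subbundle and the quotient $\nu^p:=(\Gr(\uR,l)\times \Skp)/\mu^p$ is a smooth vector bundle. The leaf space $\Iln/\V^p$ is then canonically identified with the associated affine bundle over $\Gr(\uR,l)$ modelled on $\nu^p$, and the quotient map is a surjective submersion.

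The only non-routine step is the local triviality of $\mu^p$; once the combinatorial basis of Lemma \ref{lem:basis_filt_poly} is in hand this becomes essentially formal, so I do not expect any serious obstacle. The main value of organising the argument this way is that it will also make transparent, in subsequent sections, how the induced distributions on the quotients $M^q$ are defined and how they fit into the tower of prolongations.
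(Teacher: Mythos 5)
Your argument is correct and follows essentially the same route as the paper: reduce to the fibers of $\Iln\to\Gr(\uR,l)$, observe that $\V^{p}$ restricts to a constant (parallel) distribution $\muL{p}\times \Jkp_{\thetak}$ on each affine fiber, and conclude that the leaves are parallel affine subspaces. Your additional verification that $\uL\mapsto \muL{p}$ is a smooth subbundle (via the frame of Lemma \ref{lem:basis_filt_poly}) simply fills in a detail the paper asserts without comment when claiming the leaf space is a manifold.
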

\begin{proof}
The claim is clear for $\V^{k+2}$ since this is the vertical distribution of the projection $\Iln=\Gr(\uR,l)\times \Jkp_{\thetak}  \to \Gr(\uR,l)$. To check the claim for the other vertical distributions note that, since each $\V^p\subset \V^{k+2}$, it suffices to verify it on each fiber of $\Iln \to \Gr(\uR,l)$. But each fiber $\restr{\Iln}{\uL}$ is the affine space $\Jkp_{\thetak}$ and the distribution $\restr{\V^p}{\Jkp_{\thetak}}$ is a flat affine distribution there
\begin{equation}
\restr{\V^p}{\Jkp_{\thetak}}=\muL{p}\times \Jkp_{\thetak} \subset \left(\Skp\right)\times \Jkp_{\thetak}=T\Jkp_{\thetak}.
\end{equation} 
Hence the integral leaves are parallel affine subspaces of $\Jkp_{\thetak}$ modeled on the vector space $\muL{p}$ and the space of leaves is a smooth manifold.
\end{proof}
\begin{defn}
The space of integral leaves of the distribution $\V^p$ is denoted with $M^q$ where $q=k+1-p$ is the complementary degree.
\end{defn}
This way we get the tower of fiber bundles
\begin{equation}\label{eq:tower of fibrations without distr}
M^{k+1} \to M^{k} \to \ldots \to M^0 \to \underbrace{M^{-1}}_{ = \Gr(\uR,l)}
\end{equation}
where each $M^q$ is a bundle over $M^{q-1}$ with affine fibers.\par
\subsection{Identifying $\Il$ and $\pJ$ in the tower}
It is clear that the highest component $M^{k+1}=\Iln$ and that the lowest $M^{-1}=\Gr(\uR,l)$. The second highest $M^{k}$ is $\Il$ by the next
\begin{lem}\label{lem:Vk+1}
The distribution $\V^{1}$ is the vertical distribution of the fibration $\Iln \to \Il$, so $M^{k}=\Il$.
\end{lem}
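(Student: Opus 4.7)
The plan is to compute the kernel of $T\pr_l: T_{(L,R)}\Iln \to T_L\Il$ directly using the product decomposition $\Iln = \Gr(\uR, l) \times \Jkp_{\thetak}$ from Lemma \ref{lem:product_decomp_flags}, under which the inverse formula gives $\pr_l(\uL, \thetakp) = R_{\thetakp} \cap (T\pi_{k,0})^{-1}(\uL)$. A tangent vector at $(L,R)$ decomposes as $h \oplus f \in \Hom(\uL, \uR/\uL) \oplus \Skp$, so I will pick a curve $(\uL(t), \thetakp(t))$ realising it, and differentiate the image curve $L(t) = R_{\thetakp(t)} \cap (T\pi_{k,0})^{-1}(\uL(t))$ inside $\Il \subset \Gr(\Ctheta,l)$.

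For vectors of the form $(h, 0)$ the R-plane $R$ stays constant, so $L(t) \subset R$ corresponds to $\uL(t) \subset \uR$ under the isomorphism $R \cong \uR$ induced by $T\pi_{k,0}$. Hence $T\pr_l(h, 0)$ is the lift of $h$ to $\Hom(L, R/L) \hookrightarrow \Hom(L, \Ctheta/L)$, which vanishes iff $h = 0$. For vectors of the form $(0, f)$ the subspace $\uL$ is fixed while $R(t) = R_{\thetakp + tf}$ varies. The key computation, read off most easily from $D_i = \partial_{x_i} + \sum u^j_{\sigma + 1_i}\,\partial_{u^j_\sigma}$ and the identification of $\partial_{u^j_\sigma}$ with its dual symmetric tensor in $S^{k}\uR^* \otimes N$, is that the infinitesimal variation of $R_{\thetakp}$ caused by $f$ is the linear map $\uR \to S^k\uR^* \otimes N$ given by $v \mapsto \partial_v f$ (directional differentiation of the polynomial $f$). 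Consequently $L(t)$ is the graph of the restriction of this map to $\uL$, and the induced $\dot L(0) \in \Hom(L, \Ctheta/L)$ sends $l \in L$ (with $T\pi_{k,0}(l) = v$) to $\partial_v f$ modulo $L$. Since $\partial_v f \in \ker T\pi_{k,k-1} = S^k\uR^*\otimes N$ while $L$ is horizontal, vanishing modulo $L$ is the same as vanishing outright; hence $\dot L(0) = 0$ iff $\partial_v f = 0$ for every $v \in \uL$, which by Definition \ref{defn:filtr_poly} is exactly the condition $f \in \muL{1}$.

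These two computations land in the transverse subspaces $\Hom(L, R/L)$ and $\Hom(L, S^k\uR^* \otimes N)$ of $\Hom(L, \Ctheta/L)$, so by linearity $\ker T\pr_l|_{(L,R)}$ is the direct sum of the individual kernels, namely $\{0 \oplus f : f \in \muL{1}\} = \V^1_{(L,R)}$. Surjectivity of $\pr_l$ (every regular $L\in\Il$ extends to an R-plane) together with connectedness of its fibres then identifies the leaf space $M^k$ of $\V^1$ with $\Il$ through the submersion $\pr_l$.

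The main obstacle is the verification that the perturbation of $\thetakp$ by $f\in\Skp$ moves the associated R-plane $R_{\thetakp}$ in the direction of the partial-differentiation map $v\mapsto \partial_v f$: this is immediate in standard coordinates via the total derivatives $D_i$, but giving a coordinate-free statement requires unpacking how the affine $\Skp$-action on $\Jkp_{\thetak}$ reflects through the dependence of $D_i$ on $(k{+}1)\St$-order jet data, and then comparing with the symmetric-tensor identification of the vertical fibre $\ker T\pi_{k,k-1}$.
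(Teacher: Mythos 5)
Your argument is correct, but it runs along a different (infinitesimal) track than the paper's. The paper's proof works entirely at the level of point sets: two flags $(L,R_{\thetakp})$ and $(L,R_{\thetakp'})$ lie in the same fibre of $\pr_l$ exactly when $L\subset R_{\thetakp}\cap R_{\thetakp'}$, and by the affine $\Skp$-structure on $\Jkp_{\thetak}$ this is equivalent to $\thetakp-\thetakp'\in\muL{1}$; since the leaves of $\V^{1}$ are precisely the affine subspaces $\{\uL\}\times\bigl(\thetakp+\muL{1}\bigr)$, fibres and leaves coincide and the identification of vertical distributions follows in one line. You instead compute $\ker T\pr_l$ directly, splitting a tangent vector as $h\oplus f$ and differentiating the curve of intersections; the two ingredients you need — that the $(h,0)$-component maps isomorphically into $\Hom(L,R/L)$ and that $T_{(L,R)}\pr_l(0\oplus f)=\restr{\de f}{L}$ — are both sound, and the transversality of the two target subspaces $\Hom(L,R/L)$ and $\Hom(L,S^{k}\uR^{*}\otimes N)$ inside $\Hom(L,\Ctheta/L)$ legitimizes taking the kernel summand by summand. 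The step you flag as the main obstacle (that perturbing $\thetakp$ by $f$ tilts $R_{\thetakp}$ by $v\mapsto\partial_v f$) is exactly the formula the paper asserts as straightforward and uses later in the proof of Proposition \ref{prop:second_flag_is_preim_polar}, so your route effectively front-loads a computation the paper defers; what you lose is the brevity of the set-theoretic argument, which needs only the global statement that $R_{\thetakp}\cap R_{\thetakp'}$ is the kernel of $v\mapsto\partial_v(\thetakp-\thetakp')$ rather than any differentiation of curves in the Grassmannian.
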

\begin{proof}
If $(L,R_{\thetakp})$ and $(L,R_{\thetakp'})$ are in the same fiber of $\pr_{l}:\Iln \to \Il$ then $\thetakp-\thetakp'\in\Skp$ is a polynomial vanishing when taking one derivative in direction of $\uL$ since $L\subset \left(R_{\thetakp}\cap R_{\thetakp'} \right)$, which is equivalent by definition to $\thetakp-\thetakp'\in\muL{1}$.
\end{proof}
Next we have
\begin{lem}
$M^0=\pJ$.
\end{lem}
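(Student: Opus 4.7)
My plan is to unwind the definition of $M^{0}$ as the leaf space of $\V^{k+1}$ and show leaf-by-leaf that this equivalence relation on $\Iln$ coincides with the one defining $\pJ$.

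First I would observe that $\V^{k+1}\subset\V^{k+2}$, so every leaf of $\V^{k+1}$ is contained in a fiber of the projection $\Iln=\Gr(\uR,l)\times\Jkp_{\thetak}\to\Gr(\uR,l)$. In particular, along any leaf the direction $\uL\in\Gr(\uR,l)$ is constant, so it is enough to describe the leaves inside each fiber $\{\uL\}\times\Jkp_{\thetak}$. By (the proof of) Lemma \ref{lem:verticals_involutive}, the restriction of $\V^{k+1}$ to this fiber is the flat affine distribution modeled on the constant vector subspace $\muL{k+1}\subset\Skp$, and hence its integral leaves are the affine subspaces of $\Jkp_{\thetak}$ parallel to $\muL{k+1}$.

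Next I would translate this into the partial-extension language. Two integral flags $(L,R_{\thetakp})$ and $(L,R_{\thetakp'})$ in the fiber over $\uL$ lie in the same leaf of $\V^{k+1}$ iff
\begin{equation}
\thetakp-\thetakp'\in\muL{k+1},
\end{equation}
which by the final corollary before subsection 1.5 is the subspace of polynomials in $\Skp$ vanishing on $\uL$. Applying Lemma \ref{lem:tangencyordk+1}, this happens exactly when $\thetakp$ and $\thetakp'$ are tangent of order $k+1$ in direction $\uL$, i.e.\ represent the same element of $\pJL$. Thus the leaf through $(L,R_{\thetakp})$ is naturally identified with the equivalence class of $\thetakp$ in $\pJL$.

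Finally I would assemble this over all $\uL\in\Gr(\uR,l)$: the diffeomorphism of Lemma \ref{lem:product_decomp_flags} intertwines the leaf-space projection $\Iln\to M^{0}$ with the map $(\uL,\thetakp)\mapsto(\uL,[\thetakp]_{\uL})$, whose target is precisely the total space of $\Dir:\pJ\to\Gr(\uR,l)$. This gives a canonical bijection $M^{0}\cong\pJ$ compatible with the projection to $M^{-1}=\Gr(\uR,l)$, and it is a diffeomorphism because both quotient manifold structures come from the same flat affine foliation of $\Jkp_{\thetak}$. I do not anticipate a real obstacle here; the only point worth stating carefully is the identification $\muL{k+1}=\{\,f\in\Skp:f|_{\uL}=0\,\}$, so that Lemma \ref{lem:tangencyordk+1} can be invoked directly.
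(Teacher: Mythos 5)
Your proposal is correct and follows essentially the same route as the paper: both identify the leaves of $\V^{k+1}$ as the affine subspaces of each fiber $\{\uL\}\times\Jkp_{\thetak}$ modeled on $\muL{k+1}$, and then recognize the condition $\thetakp-\thetakp'\in\muL{k+1}$ as tangency of order $k+1$ along $\uL$, i.e.\ equality in $\pJL$. You simply spell out the intermediate citations (the corollary identifying $\muL{k+1}$ with polynomials vanishing on $\uL$, and Lemma~\ref{lem:tangencyordk+1}) that the paper's one-line proof leaves implicit.
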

\begin{proof}
Two flags $(L,R_{\thetakp})$ and $(L',R_{\thetakp'})$ are in the same leaf of the distribution $\V^{k+1}$ if and only if $\uL=\underline{L'}$ and $\thetakp-\thetakp'\in \muL{k+1}$, but this last condition is precisely the condition that all $k+1\St$ derivatives of $\thetakp$ and $\thetakp'$ in direction of $\uL$ agree, hence they define the same partial jet prolongation of $\thetak$.
\end{proof}
So we have identified the following components:
\begin{equation}\label{eq:tower_fibrations}
\underbrace{M^{k+1}}_{=\Iln} \to \underbrace{M^{k}}_{=\Il} \to M^{k-1} \to \ldots \to \underbrace{M^{0}}_{=\pJ}  \to  \underbrace{M^{-1}}_{=\Gr(\uR,l)} .
\end{equation}
%
%
%
%
%
%
%
%
%
\section{Supplying the tower with distributions}\label{sec:supplying with distr}
Our next aim is to supply each $M^q$ with a natural distribution $\uF{q}$. We proceed by exhibiting a second chain of distributions on $\Iln$ which will then descend to the $M^q$'s by a process of symmetry reduction.
\subsection{Higher flag distributions on $\Iln$}
\begin{defn}\label{defn:higher_flag}
For $p=-1,0,1,\ldots,k+1$ define the $p\Th$ \emph{flag distribution} $\F^p$ on $\Iln$ as the sum of  $\V^{p+1}$ with the distribution vertical to the projection $\pr_{n}:\Iln \to \Jkp_{\thetak}$.
\end{defn}
So the plane of the $p\Th$ flag distribution at a point $(L,R)$  is
\begin{equation}
\F^p_{(L,N)}=\left \{  h\oplus f\in \Hom(\uL, \uR/\uL) \oplus \left( \Skp \right)=T_{(L,R)}\Iln \,\middle|\, f\in \muL{p+1} \right \} .
\end{equation}
It is clear that 
\begin{equation}
\underbrace{\F^{-1}}_{=V\pr_{n}}\subset \F^{0}\subset \ldots \subset \underbrace{\F^{k+1}}_{=T\Iln} .
\end{equation}
Concerning the second smallest distribution  $\F^{0}$ we have
\begin{lem}\label{lem:F0 is flag distr}
$\F^{0}$ is the flag distribution $\Dub$ of $\Iln$.
\end{lem}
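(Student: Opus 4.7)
The plan is to reduce the statement to the previous Lemma \ref{lem:Vk+1} via the product decomposition from Lemma \ref{lem:product_decomp_flags} and then just unpack definitions. By the defining formula $\Dub := V\pr_l + V\pr_n$, it suffices to identify the two summands with $\V^1$ and with the vertical distribution of $\pr_n$, respectively, and then observe that their sum is precisely $\F^0 = \V^1 + V\pr_n$.

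First I would describe $V\pr_n$ in the internal coordinates on $T\Iln$. Under the diffeomorphism $\Iln \cong \Gr(\uR,l)\times \Jkp_{\thetak}$ of Lemma \ref{lem:product_decomp_flags}, the projection $\pr_n$ is nothing but projection onto the second factor, so its vertical distribution at $(L,R)$ is
$$V\pr_n\bigr|_{(L,R)} = \left\{\, h\oplus 0 \,\middle|\, h\in \Hom(\uL,\uR/\uL)\,\right\}.$$
Next, Lemma \ref{lem:Vk+1} identifies $V\pr_l$ with $\V^1$, whose plane at $(L,R)$ is $\{\,0\oplus f \mid f\in \muL{1}\,\}$. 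Summing the two subspaces and using the trivial decomposition $h\oplus f = (0\oplus f)+(h\oplus 0)$ gives
$$V\pr_l + V\pr_n \;=\; \V^1 + V\pr_n \;=\; \left\{\, h\oplus f \,\middle|\, h\in \Hom(\uL,\uR/\uL),\ f\in \muL{1}\,\right\},$$
which is precisely $\F^0_{(L,R)}$ by Definition \ref{defn:higher_flag} with $p=0$. Since $\Dub$ and $\F^0$ are smooth subbundles that agree pointwise, they agree as distributions.

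I do not foresee any real obstacle: all the substantive content has been packaged into Lemma \ref{lem:Vk+1}, and the remainder is a bookkeeping identification of two explicit subspaces of $T_{(L,R)}\Iln$ under the decomposition $\Hom(\uL,\uR/\uL)\oplus \Skp$.
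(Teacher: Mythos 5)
Your proposal is correct and follows exactly the route the paper intends: the paper's proof is simply ``a direct consequence of the definitions and Lemma \ref{lem:Vk+1}'', and you have spelled out that consequence by substituting $V\pr_l=\V^1$ into $\Dub=V\pr_l+V\pr_n$ and matching the result with Definition \ref{defn:higher_flag} at $p=0$. The pointwise identification of $V\pr_n$ with the $\Hom(\uL,\uR/\uL)$ summand under the product decomposition is accurate, so there is nothing to add.
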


\begin{proof}
This is a direct consequence of the definitions and lemma \ref{lem:Vk+1}.
\end{proof}
\begin{rem}
We shall see later (corollary \ref{cor:flags are derived of each other}), that the \emph{higher flag} distributions are derived distributions (in the sense defined in subsection \ref{subs:convent jets}) of $\F^{0}$. This, together with the previous lemma \ref{lem:F0 is flag distr}, justifies the terminology.
\end{rem}
To explain how the distributions $\F^{p}$ descend to $M^{q}$ we recall the notion of characteristic symmetries of a distribution \cite{MR1670044}.
\begin{defn}
A vector field $X$ is called a \emph{characteristic symmetry} of a distribution $\mathcal{E}$, if it is contained in $\mathcal{E}$ and a symmetry of $\mathcal{E}$ (so Lie brackets of $X$ with fields in the distribution remain in the distribution). 
\end{defn}
Characteristic symmetries form an involutive sub-distribution of $\mathcal{E}$ and one may always locally quotient $\mathcal{E}$ by the characteristic distribution to obtain a distribution on the space of integral leaves of the characteristic distribution. We call this process the \emph{reduction of $\mathcal{E}$ by characteristic symmetries}.\par
Hence, to proceed, our aim is to prove the following
\begin{thm}\label{thm:charac_sym_of_flag_dist}
For all $p=0,\ldots,k+1$ the characteristic distribution of $\F^p$ is $\V^{p}$.
\end{thm}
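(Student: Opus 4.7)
The plan is to introduce a moving frame on $\Iln$ in which both $\V^{p}$ and $\F^{p}$ have constant spanning descriptions, compute the single nontrivial Lie bracket of the frame, and then read off both inclusions $\V^{p}\subseteq \mathrm{char}(\F^{p})$ and $\mathrm{char}(\F^{p})\subseteq \V^{p}$ directly from its degree-shifting behaviour. Concretely I use the affine chart $A_{i,j}$ on $\Gr(\uR,l)$ and the moving coframe $\tilde{y}_{i}(A)=y_{i}-\sum_{j}A_{i,j}x_{j}$ from subsection \ref{subsec:infinitesimalpasting}, together with the origin $\theta_{k+1,\text{orig}}$ identifying $\Jkp_{\thetak}$ with $\Skp$, to define vector fields $\tilde{\partial}_{\delta,\lambda}^{h}$ on $\Iln$ whose value at $(L,\thetakp)$ is the vertical vector $0\oplus \tilde{y}(A)^{\delta}x^{\lambda}\otimes e_{h}$. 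By lemma \ref{lem:basis_filt_poly} applied in this coframe, $\V^{p}$ is spanned exactly by the $\tilde{\partial}_{\delta,\lambda}^{h}$ with $|\lambda|\leq p-1$, and $\F^{p}$ by these together with all $\partial_{A_{i,j}}$ and all $\tilde{\partial}_{\delta,\lambda}^{h}$ with $|\lambda|\leq p$. Differentiating $\tilde{y}^{\delta}$ with respect to $A_{i,j}$ gives the only nontrivial basis bracket
\[
[\partial_{A_{i,j}},\,\tilde{\partial}_{\delta,\lambda}^{h}]=-\delta_{i}\,\tilde{\partial}_{\delta-1_{i},\,\lambda+1_{j}}^{h},
\]
which raises the $\lambda$-degree by exactly one, while the $\partial_{A_{i,j}}$'s commute among themselves and the $\tilde{\partial}_{\delta,\lambda}^{h}$'s commute among themselves.

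For the inclusion $\V^{p}\subseteq \mathrm{char}(\F^{p})$, I take arbitrary $X\in\V^{p}$ and $Y\in\F^{p}$, expand them in the adapted frame, and apply the Leibniz rule for brackets of products. Every summand of $[X,Y]$ is either a function multiple of a basis field already in $\F^{p}$ or a function multiple of the mixed bracket above; since the $\tilde{\partial}$-summands of $X$ only involve $|\lambda|\leq p-1$, the raised $\lambda$-degree never exceeds $p$, so $[X,Y]\in\F^{p}$.

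For the reverse inclusion $\mathrm{char}(\F^{p})\subseteq \V^{p}$, I write a candidate characteristic symmetry $X=\sum a_{i,j}\,\partial_{A_{i,j}}+\sum_{|\lambda|\leq p}b_{\delta,\lambda}^{h}\,\tilde{\partial}_{\delta,\lambda}^{h}$ and test against two families of basis sections of $\F^{p}$. Bracketing against $\tilde{\partial}_{\delta',\lambda'}^{h'}$ with $|\lambda'|=p$ isolates the term $-a_{i,j}\delta'_{i}\,\tilde{\partial}_{\delta'-1_{i},\,\lambda'+1_{j}}^{h'}$ of $\lambda$-degree $p+1$, which must vanish; choosing $\delta'$ with $\delta'_{i}>0$ (possible because $|\delta'|=k+1-p\geq 1$ when $p\leq k$) forces $a_{i,j}=0$. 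With $a=0$, bracketing against $\partial_{A_{i',j'}}$ isolates $b_{\delta,\lambda}^{h}\delta_{i'}\,\tilde{\partial}_{\delta-1_{i'},\,\lambda+1_{j'}}^{h}$, whose $\lambda$-degree is $|\lambda|+1$; for $|\lambda|=p$ this must vanish, and picking $i'$ with $\delta_{i'}>0$ yields $b_{\delta,\lambda}^{h}=0$. Hence $X\in\V^{p}$.

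The main obstacle is purely bookkeeping: verifying that the required multi-indices $\delta'_{i}>0$ and $\delta_{i'}>0$ can actually be chosen (which needs $p\leq k$, so the top case $p=k+1$ where $\F^{k+1}$ coincides with $T\Iln$ requires separate handling under the paper's conventions) and tracking the Leibniz expansions cleanly. No deeper structural difficulty appears once the adapted moving frame is in place, since the single degree-raising bracket organises the entire computation.
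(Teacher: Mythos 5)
Your argument is essentially the paper's: the same moving frame of partially divided powers and coordinate fields $\partial_{A_{i,j}}$ (the paper's $V_{\delta,\lambda}^h$ and $D_{i,j}$ of Theorem \ref{thmCommutatorsFrame}), the same single degree-raising commutator, and the same two-sided deduction that the paper leaves implicit after stating the commutation relations. Your observation that $p=k+1$ is an edge case (there $\F^{k+1}=T\Iln$, so every field is trivially characteristic and only the inclusion $\V^{k+1}\subseteq\mathrm{char}(\F^{k+1})$ needed for the reduction survives) is a fair point about a detail the paper glosses over, but it does not change the substance.
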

To achieve this we construct an explicit basis of the higher flag distribution using local coordinates and compute its commutation relations in the following subsection.
\subsection{Local coordinates, a non-holonomic frame and commutators}
We start by introducing local coordinates on each component of the splitting $\Iln =\Gr(\uR,l) \times \Jkp_{\thetak}$.
Since we will later introduce a second set of local coordinates on $\Iln$, adapted to the projections $\Iln\to M^{q}$, we call this first  set \emph{trivial} and the second \emph{adapted}.
\subsubsection{Trivial local coordinates}\label{subsec:trivial_coordinates}
As in subsection \ref{subsec:infinitesimalpasting} we use affine coordinates $A_{i,j}$ on $\Gr(\uR,l)$ and identify the second component $\Jkp_{\thetak}$ with the vector space $\Skp$ using the chosen \virg{origin jet} \ref{eq:basejet}. \par
Using the bases \ref{eq:y's}, \ref{eq:x's} and \ref{eq:e's} of subsection \ref{subsec:infinitesimalpasting}, a basis of $\Skp$ is given by divided powers
\begin{equation}\label{eq:divided_powers}
\frac{1}{\delta!\lambda!}y^\delta x^\lambda \otimes e_h
\end{equation}
with $|\delta|+|\lambda|=k+1$. Here the factorial $\delta!$ of a multiindex is $\delta_1 ! \cdots \delta_d !$. 
\begin{defn}\label{defn:chart_u}
The dual basis to the divided powers \ref{eq:divided_powers} will be denoted with $u_{\delta,\lambda}^h$ and serves as local coordinates on $\Jkp_{\thetak}$. The coordinates 
\begin{equation}
A_{i,j},u_{\delta,\lambda}^h
\end{equation}
are called \emph{trivial coordinates} on $\Iln$.
\end{defn}
\subsubsection{A non-holonomic frame adapted to vertical distributions} 
Recall that a tangent vector at a point $(\uL,R)\in \Iln$ can be identified with an element $h \oplus f \in \Hom(\uL,\frac{ \uR}{\uL}) \oplus \left( \Skp \right)$. 
Such an $h \oplus f$  is in the distribution $\V^{p}$ at the point $(L,R)$ iff $f \in \muL{p}$ and $h=0$. 
Hence, according to lemma \ref{lem:basis_filt_poly} and the definition of the coordinates $A_{i,j}$, the \virg{partially} divided powers 
\begin{equation}\label{eq:partialdividedpowers}
\left( \frac{1}{\delta!}(y-\sum Ax)^\delta x^\lambda \right) \otimes e_h, \textrm{ with } |\lambda| < p
\end{equation}
form a basis of $\V^{p}$ at each point of $\Iln$ (we have suppressed the component $h=0$). 
In the previous equation the notation $(y- \sum Ax)^\delta$ stands for $(y_1- \sum_jA_{1,j}x_j)^{\delta_1}\cdots (y_d- \sum_jA_{d,j}x_j)^{\delta_d}$. 
\begin{defn}\label{defn:vertical_fields}
Local vector fields on $\Iln$ corresponding to the partially divided powers \ref{eq:partialdividedpowers}  will be denoted with $V_{\delta,\lambda}^h$ and called \emph{vertical fields}.
\end{defn}
These vertical fields $V_{\delta,\lambda}^h$ together with the coordinate fields $\partial_{A_{i,j}}$ clearly form a (non-holonimic) local frame on $\Iln$.\par
The $V_{\delta,\lambda}^h$ will play an analogous role to the vertical coordinate fields $\partial_{u^j_\sigma}$ on jet spaces \cite{MR1670044} while the $\partial_{A_{i,j}}$ will play an analogous role to the total derivatives $D_i$ on jet spaces. For this reason and since we later introduce a second set of coordinates in which the current $\partial_{A_{i,j}}$ will have a different expansion, we adopt the following terminology.
\begin{defn} The fields $\partial_{A_{i,j}}$ from the current chart will be denoted with $D_{i,j}$ and called \emph{homogeneous total derivatives}.
\end{defn}
\begin{rem}
The adjective \emph{homogeneous} will be justified after comparing the commutation relations \ref{eq:commutators} and the expansion \ref{eq:Dexpansion} of the $D_{i,j}$, with the analogous commutation relations and expansion of classical total derivatives $D_{i}$ on jet spaces.
\end{rem}
It is evident that the frame $D_{i,j},\,  V_{\delta,\lambda}^h$ is adapted to the higher vertical- and flag distributions in the sense that
\begin{align}
\V^{p} &=\left \langle V_{\delta,\lambda}^h \, \middle | \,  |\lambda| < p \right \rangle \\
\F^p &=\left \langle D_{i,j},\,  V_{\delta,\lambda}^h \, \middle | \,  |\lambda| \leq p \right \rangle
\end{align}
To prove that the distributions $\V^{p}$ are the characteristics of $\F^{p}$ we compute the commutators of the frame.
\begin{thm}\label{thmCommutatorsFrame}
All commutators of the frame $D_{i,j}, V_{\delta,\lambda}^h$ are zero except for the commutators $\left[V_{\delta,\lambda}^h, D_{i,j}\right]$ when $\delta_i>0$. In that case we have:
\begin{equation}\label{eq:commutators}
\left[V_{\delta,\lambda}^h,D_{i,j}\right]=V_{\delta-1_i,\lambda+1_j}^h .
\end{equation}
\end{thm}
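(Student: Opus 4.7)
The natural approach is to work in the trivial coordinates $(A_{i,j}, u_{\delta,\lambda}^h)$ of Definition \ref{defn:chart_u}, in which $D_{i,j}=\partial_{A_{i,j}}$ and each vertical field has the form $V_{\delta,\lambda}^h=\sum_{\delta',\lambda'} c_{\delta',\lambda'}^{h}(A)\,\partial_{u_{\delta',\lambda'}^h}$. Indeed, under the canonical identification $T_\theta \Jkp_{\thetak}\cong \Skp$, the partially divided power $\frac{1}{\delta!}(y-\sum Ax)^\delta x^\lambda\otimes e_h$ expands in the basis of divided powers dual to the $u_{\delta',\lambda'}^h$ with coefficients that are polynomials in the $A_{i,j}$ alone.

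The two trivial brackets are then immediate. First, $[D_{i,j},D_{i',j'}]=0$ because partial derivatives in a coordinate chart commute. Second, $[V_{\delta,\lambda}^h, V_{\delta',\lambda'}^{h'}]=0$ because each vertical field is an $A$-dependent combination of $\partial_u$'s, and $\partial_{u_a}$ annihilates every function of $A$, so both halves of the Lie bracket reduce to the same second-order operator and cancel.

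For the non-trivial bracket I would exploit that $\partial_{A_{i,j}}$ commutes with every $\partial_{u_a}$, which gives
\[
[V_{\delta,\lambda}^h,D_{i,j}] \;=\; -\,\partial_{A_{i,j}}\bigl(V_{\delta,\lambda}^h\bigr),
\]
with the derivative acting on coefficients only. Since the sole factor of the defining polynomial that depends on $A_{i,j}$ is $(y_i-\sum_{j'} A_{i,j'}x_{j'})^{\delta_i}$, a single chain-rule step yields
\[
\partial_{A_{i,j}}\!\left[\frac{1}{\delta!}(y-\sum Ax)^\delta x^\lambda\right] = -\,\frac{\delta_i}{\delta!}(y-\sum Ax)^{\delta-1_i}x^{\lambda+1_j} = -\,\frac{1}{(\delta-1_i)!}(y-\sum Ax)^{\delta-1_i}x^{\lambda+1_j},
\]
valid when $\delta_i>0$ and identically zero when $\delta_i=0$. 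The right-hand side, reinterpreted as the vertical field associated with this partially divided power, is by definition $-V_{\delta-1_i,\lambda+1_j}^h$, so $[V_{\delta,\lambda}^h,D_{i,j}]=V_{\delta-1_i,\lambda+1_j}^h$ when $\delta_i>0$ and vanishes otherwise.

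The only delicate point is the combinatorial bookkeeping: the normalisation $1/\delta!$ built into the definition of $V_{\delta,\lambda}^h$ is precisely what absorbs the factor $\delta_i$ produced by differentiation, yielding the divided-power normalisation $1/(\delta-1_i)!$ that indexes the next vertical field. Once this is verified, the commutation relations claimed in \eqref{eq:commutators} drop out with no further calculation.
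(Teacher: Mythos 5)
Your proof is correct and follows essentially the same route as the paper: work in the trivial coordinates where $D_{i,j}=\partial_{A_{i,j}}$, observe that the bracket reduces to $-\partial_{A_{i,j}}$ acting on the $A$-dependent coefficients of $V_{\delta,\lambda}^h$, and apply the chain rule to the partially divided power, with the $1/\delta!$ normalisation absorbing the factor $\delta_i$. The paper merely carries out the coefficient extraction more explicitly (via the operators $\partial_y^\Delta\partial_x^\Lambda$ applied to coordinate functions $u_{\Delta,\Lambda}^H$), but the substance is identical.
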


\begin{proof}
That $[D_{i,j},D_{i',j'}]=0$ is clear since in the chosen coordinates these fields are just partial derivatives. 
That $[V_{\delta,\lambda}^h,V_{\delta',\lambda'}^{h'}]=0$ is also easily seen, since by equation \ref{eq:partialdividedpowers}, the $V_{\delta,\lambda}^h$ are linear combinations of the coordinate fields $\partial_{u^h_{\delta,\lambda}}$ with coefficients depending only on the coordinates $A_{i,j}$.

We are left to consider the Lie brackets $\left[V_{\delta,\lambda}^h,D_{i,j}\right]$. We compute how these act on coordinate functions. First note that $\left[V_{\delta,\lambda}^h,D_{i,j}\right](A_{i',j'})=0$ since 

\begin{equation}
\left[V_{\delta,\lambda}^h,D_{i,j}\right](A_{i',j'})=V_{\delta,\lambda}^h( \underbrace{D_{i,j}(A_{i',j'})}_{=\textrm{constant}})-D_{i,j} (\underbrace{V_{\delta,\lambda}^h (A_{i',j'})}_{=0})=0 .
\end{equation}
Now consider the action of $\left[V_{\delta,\lambda}^h, D_{i,j}\right]$ on a coordinate function $u_{\Delta,\Lambda}^H$ where $\Delta\in \mathbb{N}^d$ and $\Lambda \in \mathbb{N}^n$ are multi-indices and $H=1,\ldots, m$:

\begin{align}\label{eq:liebracket_on_u}
V_{\delta,\lambda}^h( \underbrace{D_{i,j}(u_{\Delta,\Lambda}^H)}_{=0})- D_{i,j} (V_{\delta,\lambda}^h (u_{\Delta,\Lambda}^H))  & =-D_{i,j} (V_{\delta,\lambda}^h (u_{\Delta,\Lambda}^H)) .
\end{align}
To continue the computation consider the inner term $V_{\delta,\lambda}^h (u_{\Delta,\Lambda}^H)$ on the r.h.s. When $h\neq H$ this is obviously $0$. In the case $h = H$ note that $V_{\delta,\lambda}^h (u_{\Delta,\Lambda}^h)$ is the coefficient in front of $\partial_{u_{\Delta,\Lambda}^h}$ in the expansion of $V_{\delta,\lambda}^h$ in the coordinate frame. But this is the same as the coefficient in the expansion of $\frac{1}{\delta!}(y-\sum Ax)^\delta x^\lambda$ in front of  $\frac{1}{\Delta!\Lambda!}y^\Delta x^\Lambda$. 
This coefficient may be computed by applying the operator  
\begin{equation}
\partial_y^\Delta \partial_x^\Lambda:=\partial_{y_1}^{\Delta_1}\cdots \partial_{y_d}^{\Delta_d}\partial_{x_1}^{\Lambda_1}\cdots \partial_{x_l}^{\Lambda_l}
\end{equation} 
to $\frac{1}{\delta!}(y+\sum Ax)^\delta x^\lambda$ since all polynomials involved are homogenous. So we have
\begin{equation}\label{Vu_coefficient}
V_{\delta,\lambda}^h (u_{\Delta,\Lambda}^h) = \partial_y^\Delta \partial_x^\Lambda \left ( \frac{1}{\delta!} (y-\sum Ax)^\delta x^\lambda \otimes e_h \right ) .
\end{equation}
Plugging this in in the r.h.s of equation \ref{eq:liebracket_on_u} we obtain
\begin{equation}
\left[V_{\delta,\lambda}^h, D_{i,j} \right](u_{\Delta,\Lambda}^h) = -\partial_{A_{i,j}} \partial_y^\Delta \partial_x^\Lambda \left ( \frac{1}{\delta!} (y-\sum Ax)^\delta x^\lambda \otimes e_h \right ) .
\end{equation}
Now we can exchange the order of derivatives on the r.h.s and derive first w.r.t. $\partial_{A_{i,j}}$. Using the chain rule we compute:
\begin{align}
\partial_{A_{i,j}} \left( \frac{1}{\delta!} (y-\sum Ax)^\delta x^\lambda \right)  & = 
-\delta_i \cdot x_j \frac{1}{\delta!} (y-\sum Ax)^{\delta-1_i} x^\lambda \\
& = \begin{cases} 0 &\mbox{if } \delta_i = 0 \\
-\frac{1}{(\delta-1_i)!} (y+\sum Ax)^{\delta-1_i} x^{\lambda+1_j} & \mbox{if } \delta_i > 0. \end{cases} 
\end{align}
So we arrive at:
\begin{equation}
\left[V_{\delta,\lambda}^h,D_{i,j}\right](u_{\Delta,\Lambda}^h) = \begin{cases} 0 &\mbox{if } \delta_i = 0 \\
 \partial_y^\Delta \partial_x^\Lambda \left ( \frac{1}{(\delta-1_i)!} (y+\sum Ax)^{\delta-1_i} x^{\lambda+1_j} \otimes e_h \right ) & \mbox{if } \delta_i > 0. \end{cases} 
\end{equation}
From this we conclude that $\left[V_{\delta,\lambda}^h, D_{i,j} \right]=0$ if $\delta_i=0$ while in the case when $\delta_i>0$ the r.h.s. of the last equation is precisely $V_{\delta-1_i,\lambda+1_j}^h(u_{\Delta,\Lambda}^h)$ by equation \ref{Vu_coefficient}.
\end{proof}
A remarkable direct consequence of the commutation relations \ref{eq:commutators}, which we shall not need in the remainder, is
\begin{cor}\label{cor:flags are derived of each other}
All flag distributions $\F^{p}$ with $p\geq 1$ are derived distributions of the flag distribution $\Dub=\F^{0}$. More precisely
\begin{equation}
\F^{p+1}=\left[\F^{p}\F^{p}\right]
\end{equation}
for all $p=0, \ldots ,k$.
\end{cor}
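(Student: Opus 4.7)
The plan is to reduce the claim to a direct computation using the adapted frame $\{D_{i,j},\,V^h_{\delta,\lambda}\}$ and the commutation relations established in Theorem~\ref{thmCommutatorsFrame}. Recall that
\[
\F^p=\left\langle D_{i,j},\;V^h_{\delta,\lambda}\,\middle|\,|\lambda|\leq p\right\rangle,
\]
so $\F^p\subset\F^{p+1}$ is automatic and the derived distribution $[\F^p,\F^p]$ equals $\F^p$ together with the $C^\infty$-span of Lie brackets of frame sections. Because $[fX,gY]=fg[X,Y]+fX(g)\,Y-gY(f)\,X$, any $C^\infty$-coefficient contributions lie in $\F^p$, hence it suffices to understand brackets of the frame fields themselves.

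For the inclusion $[\F^p,\F^p]\subseteq\F^{p+1}$ I would invoke Theorem~\ref{thmCommutatorsFrame}: the only nonzero frame brackets are $[V^h_{\delta,\lambda},D_{i,j}]=V^h_{\delta-1_i,\lambda+1_j}$ when $\delta_i>0$. For $V^h_{\delta,\lambda}\in\F^p$ we have $|\lambda|\leq p$, so $|\lambda+1_j|\leq p+1$, which places the bracket in $\F^{p+1}$.

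For the reverse inclusion $\F^{p+1}\subseteq[\F^p,\F^p]$ I would exhibit every new generator explicitly. Since $\F^p$ is already contained, it suffices to produce each $V^h_{\delta',\lambda'}$ with $|\lambda'|=p+1$. Fix such a multiindex and pick any $j$ with $\lambda'_j\geq 1$, which is possible since $|\lambda'|=p+1\geq 1$. Since $p\leq k$ we have $|\delta'|=k+1-(p+1)=k-p\geq 0$; picking any index $i\in\{1,\dots,d\}$ and setting $\delta:=\delta'+1_i$, $\lambda:=\lambda'-1_j$ gives $|\delta|+|\lambda|=k+1$, $|\lambda|=p$ and $\delta_i>0$. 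Then $V^h_{\delta,\lambda}\in\F^p$ and $D_{i,j}\in\F^p$, and the commutation relation yields
\[
[V^h_{\delta,\lambda},D_{i,j}]=V^h_{\delta',\lambda'}.
\]
This exhausts the generators of $\F^{p+1}$ and completes the argument. I do not anticipate any real obstacle: the corollary is essentially a direct reading of the commutation relations, and the only mild bookkeeping point is to verify that the auxiliary index $i$ used to raise $\delta'$ is always available, which follows from $p\leq k$.
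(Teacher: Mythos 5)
Your argument is correct and is exactly the route the paper intends: the corollary is presented there as a direct consequence of the commutation relations of Theorem~\ref{thmCommutatorsFrame}, and you have simply written out the two inclusions $[\F^{p},\F^{p}]\subseteq\F^{p+1}$ and $\F^{p+1}\subseteq[\F^{p},\F^{p}]$ explicitly using the adapted frame. The only nitpick is your final remark: the availability of the auxiliary index $i$ follows from $d=n-l\geq 1$, i.e.\ the paper's standing assumption $l<n$, not from $p\leq k$ (which only guarantees that multiindices $\delta'$ of the required length exist).
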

\subsection{The reduced distributions and identifying the polar distribution}
Commutation relations \ref{eq:commutators} immediately imply theorem \ref{thm:charac_sym_of_flag_dist}, hence the flag distribution $\F^{p}$ reduces to a distribution on $M^{q}$ for $q=k+1-p$ and $0\leq q \leq k+1$ by quotienting out characteristic symmetries. 
\begin{defn}
The \emph{reduction of the flag distribution $\F^{p}$ to $M^{q}$} is denoted with $\uF{q}$, where $q=k+1-p$ is the complementary degree to $p$.
\end{defn}
Since $\F^{k+1}=T\Iln$ we have $\uF{0}=TM^{0}$. Further since $\V^{0}=0$ and  $\F^{0}=\Dub$ by lemma \ref{lem:F0 is flag distr}, we have $\uF{k+1}=\Dub$. So the tower \ref{eq:tower of fibrations without distr} is now enhanced with distributions as follows:
\begin{equation}\label{eq:tower_distr}
\underbrace{(M^{k+1}, \uF{k+1})}_{=(\Iln,\Dub)} \to (M^{k}, \uF{k}) \to (M^{k-1},\uF{k-1}) \to \ldots \to (M^0,\underbrace{\uF{0}}_{=TM^{0}} )  \to  \Gr(\uR,l)
\end{equation}
%
%
We already established $M^{k}=\Il$. We now claim that $\uF{k}$ is the polar distribution $\P$. For this it suffices to prove the following
\begin{prop}\label{prop:second_flag_is_preim_polar}
$\F^{1}$ is the lift of the polar distribution $\P$ from $\Il$ to $\Iln$ via $\Iln \to \Il$.
\end{prop}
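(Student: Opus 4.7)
The plan is to compute the differential $T\pr_{l}:T_{(L,R)}\Iln \to T_L\Il$ in the product decomposition $\Iln = \Gr(\uR,l)\times \Jkp_{\thetak}$ of Lemma \ref{lem:product_decomp_flags}, and then check directly that a tangent vector $h\oplus f\in\Hom(\uL,\uR/\uL)\oplus(\Skp)$ belongs to $\F^{1}$ (i.e.\ $f\in\muL{2}$) if and only if $T\pr_{l}(h\oplus f)$ satisfies the polar condition \ref{eq:descrpolar}. Since $\pr_{l}$ is a submersion with vertical $V\pr_{l}=\V^{1}\subset\F^{1}$, this equivalence is exactly the statement that $\F^{1}=(T\pr_{l})^{-1}\P$, which by Definition \ref{defn:relative distr lift} is the lift of $\P$.

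For the differential, I would use the R-plane $R$ to fix the splitting $\Ctheta/L\cong\uR/\uL\oplus(S^{k}\uR^{*}\otimes N)$, the second summand being tangent to the fiber of $\pi_{k,k-1}:\Jk\to\Jkl$. Any $\dot L\in T_L\Il\subset\Hom(L,\Ctheta/L)$ decomposes accordingly as $\dot L_{\text{hor}}\oplus\dot L_{\text{vert}}$. Two independent variations of $(L,R)$ have to be understood. Keeping $\thetakp$ fixed while moving $\uL$ inside $\uR$ by $h$ is tautological: it produces $\dot L_{\text{hor}}=h$ and $\dot L_{\text{vert}}=0$. Keeping $\uL$ fixed and shifting $\thetakp$ along the affine fiber by $t\cdot f$, the R-plane $R_{\thetakp+tf}$ deforms: in the coordinates of Subsection \ref{subs:convent jets}, the $i$-th canonical generator of $R$ acquires the additional term $t\sum_{|\sigma|=k}f^{h}_{\sigma+1_{i}}\partial_{u^{h}_{\sigma}}$, which under the identification $\partial_{u^{h}_\sigma}\leftrightarrow\tfrac{1}{\sigma!}x^\sigma\otimes e_h$ is exactly $t\cdot\partial_{x_{i}}f\in S^{k}\uR^{*}\otimes N$. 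Restricting to $L$ and projecting modulo $L$ yields $\dot L_{\text{vert}}(l_{i})=\partial_{x_{i}}f$ for $l_{i}\in L$ corresponding to $\partial_{x_{i}}\in\uL$ ($i=1,\ldots,l$).

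Then I would evaluate the polar condition on $\dot L=T\pr_{l}(h\oplus f)$. Because $L\subset R$ and $R$ is an R-plane, $\Omega$ vanishes on $R$, so the horizontal contribution $\Omega(l,\dot L_{\text{hor}}(l'))$ is zero automatically. The condition thus reduces to $\Omega(l_{i},\partial_{x_{j}}f)=0$ for all $i,j\in\{1,\ldots,l\}$, and a direct computation using the local formulas for $\Omega$ collected after \ref{metasymplecticnontric} gives
\begin{equation*}
\Omega(l_{i},\partial_{x_{j}}f)=-\partial_{x_{i}}\partial_{x_{j}}f\in S^{k-1}\uR^{*}\otimes N.
\end{equation*}
Hence the polar condition becomes $\partial_{x_{i}}\partial_{x_{j}}f=0$ for all $i,j\in\{1,\ldots,l\}$, which by Lemma \ref{lem:basis_filt_poly} is precisely $f\in\muL{2}$; this is exactly $h\oplus f\in\F^{1}$.

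The main technical obstacle is the middle step: the explicit identification of the first-order variation of the R-plane $R_{\thetakp+tf}$ with the polynomial derivative $\partial f$. This requires a careful bookkeeping using the coordinate description of R-planes and the canonical identification of the fiber of $\pi_{k,k-1}$ with $S^{k}\uR^{*}\otimes N$. Once this identification is in place, the metasymplectic computation reduces to a single application of formula \ref{metasymplecticnontric}, and the equivalence between $\F^{1}$ and the lift of $\P$ follows immediately.
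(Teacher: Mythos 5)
Your proposal is correct and follows essentially the same route as the paper: identify $T\pr_{l}(0\oplus f)$ with the restricted differential of the polynomial $f$, apply the metasymplectic formula \ref{metasymplecticnontric} to reduce the polar condition to $\partial_{l_1}\partial_{l_2}f=0$, i.e.\ $f\in\muL{2}$. You additionally spell out why the horizontal component $h$ is irrelevant (because $\Omega$ vanishes on the R-plane $R$), a point the paper leaves implicit by restricting to vectors of the form $0\oplus f$.
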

\begin{proof}
Fix $(L,R)\in \Iln$. By the definition of $\F^{1}$ we need to show that for any tangent vector at $(L,R)\in \Iln$ of the form $0\oplus f \in \Hom( \uL, \uR/\uL ) \oplus \left( \Skp \right)$ the following conditions are equivalent:
\begin{itemize}
\item[1)] $f\in \muL{2}$
\item[2)] $T_{(L,R)}\pr_{l}(0\oplus f)\in \P_{L}$
\end{itemize}
where $T_{(L,R)}\pr_{l}$ is the tangent map of $\pr_{l}:\Iln\to \Il$ at $(L,R)$. Let 
\begin{equation}
\de f : \uR \to \Sk
\end{equation} 
denote the total differential of the polynomial $f \in \Skp$ and let
\begin{equation}\label{eq:dprest}
\restr{\de f}{L}:L\to \Ctheta/L
\end{equation}
denote its restriction to $L$. In \ref{eq:dprest} we have implicitly used the canonical isomorphism $L\cong \uL$ and the natural inclusion $\Sk \subset \Ctheta/L$ as vertical tangent space to the projection $\Jk\to\Jkl$. It is straightforward to see that for any $0\oplus f \in T_{(L,R)}\Iln$
\begin{equation}
T_{(L,R)}\pr_{l}(0\oplus f)= \restr{\textrm{d} f}{L} .
\end{equation}
Now we  compute with $l_1,l_2\in L$
\begin{align}
\Omega\left (l_{1}\, , \,T_{(L,R)}\pr_{l}(0\oplus f)(l_{2})\right)&=\Omega\left (l_{1}\, , \,\restr{\de f}{L}\left (l_{2} \right)\right)\\
&=\partial_{l_{1}}\partial_{l_{2}}f \label{eq:omegastruct}
\end{align}
where \ref{eq:omegastruct} follows from the structural properties of the metasymplectic form $\Omega$ \ref{metasymplecticnontric}. But \ref{eq:omegastruct} is zero for all $l_1,l_2\in L\cong \uL$ iff $f\in \muL{2}$ so the claim follows from description \ref{eq:descrpolar}.
\end{proof}
It remains to identify the pasting conditions in the tower. We will do this in the next section together with the proof that consecutive components of the tower are prolongations.
\section{Proving that the tower prolongs the pasting conditions}\label{sect:proof that tower is prolongations}
\subsection{Consecutive $M^{q}$'s are prolongations}
Our next aim is to prove that each distributions $(M^{q},\uF{q})$ is the prolongation of the previous $(M^{q-1},\uF{q-1})$ for $q>1$. Denote the projection with 
\begin{equation}
\Pi_{q,q-1}:M^{q}\to M^{q-1}
\end{equation}
and let $\phiq$ be a point in the fiber $M^{q}_{\phiql}$ over $\phiql \in M^{q-1}$. Attached to $\phiq$ is the plane $\uF{q}_{\phiq}$ of the distribution $\uF{q}$ which we may project down to $M^{q-1}$. We denote the projected plane with 
\begin{equation}
\Qphiq:=T_{\phiq}\Pi_{q,q-1}(\uF{q}_{\phiq}) .
\end{equation}
These \virg{Q-planes} are analogous to the R-planes in jets spaces by the following three results which together prove that each $(M^{q},\uF{q})$ is the prolongation of $(M^{q-1},\uF{q-1})$ for $q>1$.
\begin{prop}\label{prop:prolongation1}
For each $\phiq\in M^{q}$ with $q=1,\ldots,k+1 $, the plane $\Qphiq$ is a horizontal maximal integral element in $(M^{q-1},\uF{q-1})$ of dimension $\dim \Gr(\uR,l)$. Horizontal here means transversal to $M^{q-1}\to M^{q-2}$, which turns out to be equivalent to being transversal to $M^{q-1}\to \Gr(\uR,l)$.
\end{prop}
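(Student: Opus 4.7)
The plan is to work in the non-holonomic frame $D_{i,j},\, V_{\delta,\lambda}^{h}$ on $\Iln$ and push all calculations downstairs through the symmetry reductions using the commutators \ref{eq:commutators}. Setting $p:=k+1-q$, the manifold $M^{q}$ is the leaf space of $\V^{p}$ and $\uF{q}$ is the reduction of $\F^{p}$ modulo its characteristic sub-distribution $\V^{p}$. Thus $\uF{q}_{\phiq}$ is spanned by the classes $\bar{D}_{i,j}$ together with the classes $\bar{V}_{\delta,\lambda}^{h}$ with $|\lambda|=p$; the latter span the vertical $V\Pi_{q,q-1}$ and are therefore killed by $T\Pi_{q,q-1}$. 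Consequently $\Qphiq$ is spanned by the images $T\Pi_{q,q-1}(\bar{D}_{i,j})$ for $1\leq i\leq d,\ 1\leq j\leq l$.

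The dimension and horizontality claims are now immediate: the images of $\bar{D}_{i,j}$ in $M^{q-1}$ project further onto the coordinate basis $\partial_{A_{i,j}}$ of $T_{\uL}\Gr(\uR,l)$, so $\Qphiq$ has dimension $dl=\dim\Gr(\uR,l)$ and maps isomorphically onto $T_{\uL}\Gr(\uR,l)$. This yields horizontality with respect to $M^{q-1}\to\Gr(\uR,l)$, and \emph{a fortiori} with respect to $M^{q-1}\to M^{q-2}$. The converse equivalence of the two transversality conditions, for planes lying inside $\uF{q-1}$, reduces to the observation that $\uF{q-1}\cap V(M^{q-1}\to\Gr(\uR,l))$ and $V\Pi_{q-1,q-2}$ coincide: both are spanned by the classes of $V_{\delta,\lambda}^{h}$ with $|\lambda|=p+1$.

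For integrality I extend a basis of $\Qphiq$ to local sections of $\uF{q-1}$ as follows. The bracket $[V_{\delta,\lambda}^{h},D_{i,j}]=V_{\delta-1_{i},\lambda+1_{j}}^{h}$ from \ref{eq:commutators} lies in $\V^{p+1}$ whenever $V_{\delta,\lambda}^{h}\in \V^{p+1}$, so $D_{i,j}$ descends to a well-defined section $\bar{D}_{i,j}^{q-1}$ of $\uF{q-1}$ on $M^{q-1}$, whose value at $\phi_{q-1}$ is the corresponding generator of $\Qphiq$. Since $[D_{i,j},D_{i',j'}]=0$ upstairs, the induced brackets vanish identically on $M^{q-1}$, and $\Qphiq$ is an integral element. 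Maximality is forced by the dimension count: any horizontal integral element injects into $T\Gr(\uR,l)$ under the projection and so has dimension $\leq dl$. The main subtlety I foresee is precisely the interplay between reduction and Lie brackets — the $D_{i,j}$'s are not $\V^{p}$-invariant — but the uniform shape of \ref{eq:commutators}, with every mixed bracket falling back into the $V$-tower of the filtration, is exactly what guarantees that the classes descend and that bracket computations upstairs and downstairs agree.
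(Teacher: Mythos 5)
Your treatment of the dimension, the horizontality, and the equivalence of the two transversality conditions is correct and agrees with what the paper extracts from lemma \ref{lem:commutatiors of reduced distr} and corollary \ref{lem:splitting uF}. The integrality argument, however, contains a genuine gap. You claim that $[V_{\delta,\lambda}^{h},D_{i,j}]$ lies in $\V^{p+1}$ whenever $V_{\delta,\lambda}^{h}\in\V^{p+1}$, and hence that $D_{i,j}$ descends to a well-defined section of $\uF{q-1}$ on $M^{q-1}$. This is false: $\V^{p+1}$ is spanned by the $V_{\delta,\lambda}^{h}$ with $|\lambda|\leq p$, and for $|\lambda|=p$, $\delta_{i}>0$, relation \ref{eq:commutators} gives $[V_{\delta,\lambda}^{h},D_{i,j}]=V_{\delta-1_{i},\lambda+1_{j}}^{h}$ with $|\lambda+1_{j}|=p+1$, which is \emph{not} in $\V^{p+1}$ (this is exactly why theorem \ref{thm:charac_sym_of_flag_dist} identifies the characteristics of $\F^{p}$ as $\V^{p}$ and not $\V^{p+1}$). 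So $D_{i,j}$ is not projectable along $\Iln\to M^{q-1}$. The failure is not a technicality: if the $D_{i,j}$ descended to vector fields on $M^{q-1}$, then every $\Qphiq$ with $\phiq$ in the fiber over $\phiql$ would be spanned by the same vectors, i.e. $\phiq\mapsto\Qphiq$ would be constant on fibers, contradicting the injectivity of proposition \ref{prop:prolongation2} and collapsing the prolongation tower to a product. The genuine dependence of $\Qphiq$ on the fiber coordinates of $\phiq$ is precisely formula \ref{eq:basisofQplanes}.

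Consequently the integrality of $\Qphiq$ cannot be reduced to \virg{the brackets of the $D_{i,j}$ vanish upstairs}; it requires evaluating the curvature form $\Omega^{[q-1]}$ on the actual basis $C_{i,j}=D_{i,j}^{[q-1]}+\sum v_{\delta+1_{i},\lambda-1_{j}}^{h}\partial_{v_{\delta,\lambda}^{h}}$, whose coefficients are the fiber coordinates of $\phiq$. Using the commutators \ref{eq:commutatorsrtrunc} one finds the conditions of lemma \ref{lem:coefficents of integral Q's}, and these hold because the coefficients $C^{\delta,\lambda}_{i,j,h}=v_{\delta+1_{i},\lambda-1_{j}}^{h}$ are all read off from a single family $v^{h}_{\Delta,\Lambda}$ of symmetric multi-indexed coordinates (lemma \ref{lem:conditions on coefficients of Qphi}); this is the paper's route. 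If you want a bracket-theoretic argument closer to your intent, the correct version is: $\Qphiq$ is the image of $\F^{p}_{(L,R)}$ under $T(\Iln\to M^{q-1})$, and $[\F^{p},\F^{p}]=\F^{p+1}$ by corollary \ref{cor:flags are derived of each other}, so the curvature of $\F^{p+1}$ vanishes on $\F^{p}\wedge\F^{p}$ and this vanishing passes to the reduction $\uF{q-1}$. Either way, a substantive step is missing from your write-up.
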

\begin{prop} \label{prop:prolongation2}
For all $q=1,\ldots,k+1$, the map 
\begin{equation}
\phiq \mapsto \Qphiq
\end{equation}
is an injection from the fiber $M^{q}_{\phiql}$ into the space of horizontal maximal integral elements of $\uF{q-1}$ at $\phiql$. 
\end{prop}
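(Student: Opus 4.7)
The plan is to reduce the injectivity of $\phi_q \mapsto Q_{\phi_q}$ on each fiber $M^q_{\phiql}$ to a linear algebra statement governed by the commutator formula of Theorem \ref{thmCommutatorsFrame}. Fix two points $\phi_q, \phi_q' \in M^q$ above the same $\phiql \in M^{q-1}$ and lift them to $(L,R), (L,R') \in \Iln$ sharing the same $\uL$. Setting $p := k+1-q$, the difference $f := R - R'$ lies in $\muL{p+1}$ because both lifts project to $\phiql$, and $\phi_q = \phi_q'$ is equivalent to $f \in \muL{p}$. The goal is to deduce this from $Q_{\phi_q} = Q_{\phi_q'}$.

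First I would show that $Q_{\phi_q}$ is spanned by only $ld$ vectors. Lifting $T\Pi_{q,q-1}$ all the way up to $\Iln$ gives $Q_{\phi_q} = T\Pi_{k+1,q-1}(\F^p_{(L,R)})$, and since $\ker T\Pi_{k+1,q-1} = \V^{p+1}$ already absorbs every $V^h_{\delta,\lambda}$ with $|\lambda| \leq p$, the span collapses to the images of the $D_{i,j}$'s alone. By Proposition \ref{prop:prolongation1} the plane is horizontal over $M^{q-2}$ (or over $\Gr(\uR,l)$ when $q=1$), so each generator $T\Pi_{k+1,q-1}(D_{i,j}|_{(L,R)})$ admits a canonical decomposition into a horizontal part (independent of $\phi_q$) plus a vertical correction $w_{i,j}$ in the vertical tangent bundle of $\Pi_{q-1,q-2}$. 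Equality of the Q-planes then reduces to $w_{i,j} = w'_{i,j}$ for every pair $(i,j)$.

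To compute the $w_{i,j}$'s I would pass to adapted fiber coordinates $\tilde u^h_{\delta,\lambda}$ dual to the frame $\frac{1}{\delta!\lambda!}(y - Ax)^\delta x^\lambda \otimes e_h$, so that $V^h_{\delta,\lambda} = \partial_{\tilde u^h_{\delta,\lambda}}$ and the projection to $M^{q-1}$ amounts to forgetting $\tilde u^h_{\delta,\lambda}$ with $|\lambda| \leq p$. The chain rule expands $D_{i,j} = \partial_{A_{i,j}}|_{\tilde u} + \sum \Gamma^h_{\delta,\lambda;i,j}(A,\tilde u)\, V^h_{\delta,\lambda}$, and reading off the commutator identity $[V^h_{\delta,\lambda}, D_{i,j}] = V^h_{\delta-1_i,\lambda+1_j}$ through $V^h = \partial_{\tilde u^h}$ pins down
\[
\Gamma^h_{\delta,\lambda;i,j} = \tilde u^h_{\delta+1_i,\lambda-1_j}
\]
(when $\lambda_j \geq 1$, and zero otherwise), using also that $\Gamma$ vanishes at $\tilde u = 0$ where the trivial and adapted coordinate systems agree. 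Pushing down to $M^{q-1}$ retains only the $(\delta,\lambda)$ with $|\lambda| \geq p+1$, and among these the ones contributing to the vertical of $\Pi_{q-1,q-2}$ are those with $|\lambda|=p+1$.

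Thus $w_{i,j}$ is a linear combination, with coefficients the $\tilde u$'s of weight $|\lambda|=p$, of the basis $\{\partial_{\tilde u^h_{\delta,\lambda}} : |\lambda|=p+1\}$ of the vertical of $\Pi_{q-1,q-2}$. Because the index map $(\delta_0,\lambda_0) \mapsto (\delta_0-1_i,\lambda_0+1_j)$ is injective for fixed $(i,j)$, the equation $w_{i,j} = w'_{i,j}$ forces $\Delta^h_{\delta_0,\lambda_0} := \tilde u^h_{\delta_0,\lambda_0}(\phi_q') - \tilde u^h_{\delta_0,\lambda_0}(\phi_q) = 0$ whenever $(\delta_0)_i \geq 1$. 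Since $|\delta_0| = q \geq 1$ always admits some such $i$, varying $(i,j)$ recovers every $\Delta^h_{\delta_0,\lambda_0}$ with $|\lambda_0|=p$, and these are precisely the fiber coordinates distinguishing $\phi_q$ from $\phi_q'$. Hence $\phi_q = \phi_q'$. The main technical hurdle will be the bookkeeping in the chain-rule computation identifying $\Gamma$; once that is in place, injectivity falls out of the combinatorial bijection encoded in the commutator formula.
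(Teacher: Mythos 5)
Your argument is correct and is essentially the paper's own proof: in the adapted coordinates the fiber coordinates of $\phiq$ appear as the coefficients of the unique basis $C_{i,j}=D^{[q-1]}_{i,j}+\sum v^h_{\delta+1_i,\lambda-1_j}\partial_{v^h_{\delta,\lambda}}$ of $\Qphiq$, and the injectivity of $(\delta_0,\lambda_0)\mapsto(\delta_0-1_i,\lambda_0+1_j)$ together with $|\delta_0|=q\geq 1$ recovers every fiber coordinate. The only difference is that you re-derive the expansion of $D_{i,j}$ from the commutator relations, whereas the paper already records it as proposition \ref{prop:D expansion in coord}.
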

So we may identify $M^{q}$ with a subset of maximal horizontal integral elements of $(M^{q-1},\uF{q-1})$. In fact, for $q\geq 2$, \emph{any} maximal integral elements of $(M^{q-1},\uF{q-1})$ is of the form $\Qphiq$, which is the content of the next
\begin{prop} \label{prop:prolongation3}
For all $q=2,\ldots,k+1$, the map 
\begin{equation}
\phiq \mapsto \Qphiq
\end{equation}
is a surjection from the fiber $M^{q}_{\phiql}$ to horizontal maximal integral elements of $\uF{q-1}$ at $\phiql$.
\end{prop}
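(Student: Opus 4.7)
The plan is to prove surjectivity by inverting the explicit $b\mapsto c$ correspondence that, together with Proposition \ref{prop:prolongation2}, parametrises the image of $\phiq\mapsto \Qphiq$ in the non-holonomic frame of Section \ref{sec:supplying with distr}. Fix a lift $\tilde\phi\in \Iln$ of $\phiql$ and set $p=k+1-q$; lift an arbitrary horizontal maximal integral element $Q$ of $\uF{q-1}$ at $\phiql$ to a subspace $\tilde Q\subseteq \F^{p+1}_{\tilde\phi}$ containing $\V^{p+1}_{\tilde\phi}$. Horizontality with respect to $M^{q-1}\to M^{q-2}$ forces a unique representation, modulo $\V^{p+1}$, of a basis of $\tilde Q$ as
\[
\tilde D_{i,j} \;=\; D_{i,j} + \sum_{\substack{|\delta|=q-1\\ |\lambda|=p+1,\,h}} c_{i,j}^{\delta,\lambda,h}\,V_{\delta,\lambda}^{h}.
\]

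The first step is to extract the integrability equations on $c$. Computing $[\tilde D_{i,j},\tilde D_{i',j'}]$ with the commutator table of Theorem \ref{thmCommutatorsFrame} and demanding that the components along $V_{\Delta,\Lambda}^{h}$ with $|\Lambda|=p+2$ (the only pieces falling outside $\F^{p+1}$) vanish yields, by a routine bookkeeping,
\begin{align*}
\textrm{(A)}\quad & c_{i,j}^{\delta,\lambda,h} = 0 \quad\textrm{when } \lambda_{j}=0,\\
\textrm{(B)}\quad & c_{i,j}^{\Delta+1_{i'},\,\Lambda-1_{j'},\,h} = c_{i',j'}^{\Delta+1_{i},\,\Lambda-1_{j},\,h} \quad\textrm{when } \Lambda_{j},\Lambda_{j'}\geq 1.
\end{align*}

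The heart of the argument is to reconstruct a point $\phiq$ from $(c_{i,j}^{\delta,\lambda,h})$. The fiber $M^{q}_{\phiql}$ is affine over $\V^{p+1}/\V^{p}$, which carries the natural basis $\{V_{D,L}^{h}\}_{|D|=q,\,|L|=p}$, so it suffices to produce coefficients $b^{D,L,h}$. I set
\[
b^{D,L,h}\;:=\;c_{i,j}^{D-1_{i},\,L+1_{j},\,h}
\]
for any pair $(i,j)$ with $D_{i}\geq 1$; well-definedness of this formula is the key computation. For $i\neq i'$ with $D_{i},D_{i'}\geq 1$, condition (B) applied with $\Delta=D-1_{i}-1_{i'}$ and $\Lambda=L+1_{j}+1_{j'}$ directly gives $c_{i,j}^{D-1_{i},L+1_{j},h}=c_{i',j'}^{D-1_{i'},L+1_{j'},h}$ for \emph{arbitrary} $j,j'$, which also settles the ambiguity in the $j$-label. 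The only remaining situation is when the whole weight of $D$ concentrates at a single index $i$, in which case $D_{i}=q\geq 2$ and (B) with $i=i'$ and $\Delta = D-2\cdot 1_{i}$ handles the $j$-ambiguity. This is exactly where the hypothesis $q\geq 2$ enters, and the argument fails when $q=1$.

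Let $\phiq\in M^{q}_{\phiql}$ be the point with coordinates $b$. To verify $\Qphiq = Q$, reverse the displacement calculation used in the proof of Proposition \ref{prop:prolongation2}: moving the lift from $\tilde\phi$ to $\tilde\phi + tV_{D,L}^{h}$ (with $|D|=q,\,|L|=p$) shifts the projection of $D_{i,j}$ inside $\uF{q-1}_{\phiql}$ by $tV_{D-1_{i},L+1_{j}}^{h}$, because of the bracket $[V_{D,L}^{h},D_{i,j}] = V_{D-1_{i},L+1_{j}}^{h}$ from Theorem \ref{thmCommutatorsFrame}. Hence the $c$-coefficients of $\Qphiq$ come out as $c_{i,j}^{\Delta,\Lambda,h}=b^{\Delta+1_{i},\Lambda-1_{j},h}$ when $\Lambda_{j}\geq 1$, and $0$ otherwise; these agree with the original $c$'s of $Q$ by the very definition of $b$ and by condition (A). The main obstacle throughout is the asymmetric index combinatorics in (B) -- the labels $j$ and $j'$ are crossed between the two sides -- and taming this crossing with the $q\geq 2$ hypothesis is exactly what delivers the well-definedness above.
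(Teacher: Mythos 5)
Your proof is correct and follows essentially the paper's own route: your integrability conditions (A) and (B) are exactly conditions \ref{conditionC2} and \ref{conditionC1} of Lemma \ref{lem:coefficents of integral Q's}, your reconstruction $b^{D,L,h}:=c^{D-1_{i},L+1_{j},h}_{i,j}$ with its well-definedness check is exactly formula \ref{coordinatesofphiq} in the proof of Lemma \ref{lem:conditions on coefficients of Qphi}, and the final verification reproduces \ref{eq:basisofQplanes}. The only (inessential) difference is that you carry out the computation upstairs on $\Iln$ with the frame of Theorem \ref{thmCommutatorsFrame} instead of downstairs on $M^{q-1}$ with the truncated frame of Lemma \ref{lem:commutatiors of reduced distr}.
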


To prove propositions \ref{prop:prolongation1}, \ref{prop:prolongation2}, \ref{prop:prolongation3} we introduce a second set of coordinates on $\Iln$ which descend to the quotients $M^q$. This allows us to give explicit bases of the reduced distributions $\uF{q}$ and compute their commutation relations.\par
\subsection{Local coordinates and non holonomic frames on the $M^{q}$'s}
Since we fixed a jet $\theta_{k+1,\text{orig}} \in \Jkp_{\thetak}$ in \ref{eq:basejet} to identify $\Jkp_{\thetak}$ with the vector space $\Skp$, we may consider
\begin{equation}
\Gr(\uR,l)\times \Jkp_{\thetak} \to \Gr(\uR,l)
\end{equation} 
to be a vector bundle. The partially divided powers $\frac{1}{\delta!}(y-\sum Ax)^\delta x^\lambda\otimes e^h$ then form a basis in each fiber. This frame is \virg{moving} from fiber to fiber as it depends on the base coordinates $A_{i,j}$. Here $A_{i,j}$ and $x,y$ have the same meaning as in subsection \ref{subsec:trivial_coordinates}. \par
\begin{defn}\label{defn:chart_v}
The fiber-wise dual one-forms to the frame 
\begin{equation}
\frac{1}{\delta!}(y-\sum Ax)^\delta x^\lambda\otimes e_h
\end{equation} 
will be denoted with $v_{\delta,\lambda}^h$ and provide new coordinates on the fibers of $\Gr(\uR,l)\times \Jkp_{\thetak} \to \Gr(\uR,l)$. Together with the coordinates $A_{i,j}$ on the base $\Gr(\uR,l)$ they constitute another set of local coordinates on  $\Iln$ which we call \emph{adapted}. 
\end{defn}
Observe that in these adapted coordinates the vector fields $V_{\delta,\lambda}^h$ are just the partial derivatives $\partial_{v_{\delta,\lambda}^h}$ 
\begin{equation}\label{eq:V's_in_adaptedcoord}
V_{\delta,\lambda}^h=\partial_{v_{\delta,\lambda}^h}
\end{equation}
while the fields $D_{i,j}$ are \emph{no longer} the coordinate fields $\partial_{A_{i,j}}$, as in the trivial coordinates.\par
It is clear from \ref{eq:V's_in_adaptedcoord} that the coordinates $A_{i,j},v^{h}_{\delta,\lambda}$ with $|\delta| \leq q$ descend to coordinates on $M^{q}$.\par
Our next aim is to expand the fields $D_{i,j}$ in the coordinates $A_{i,j},v_{\delta,\lambda}^h$. 
\begin{prop}\label{prop:D expansion in coord}
We have
\begin{align}
	D_{i,j}(A_{i',j'}) &= 
		\begin{cases}
			1  &\mbox{if } i = i' \textrm{ and } j=j' \\
			0 & \mbox{else } 
		\end{cases} \label{D_ijOnA_ij}\\
	D_{i,j}(v_{\delta,\lambda}^h) &= 
		\begin{cases}
			v_{\delta+ 1_i, \lambda - 1_j}^h  &\mbox{if } \lambda_j>0 \\
			0 & \mbox{else } 
		\end{cases} \label{D_ijOnv's} ,
\end{align}
from which the coordinate expansion 
\begin{equation}\label{eq:Dexpansion}
D_{i,j}=\partial_{A_{i,j}}+\sum_{ \lambda_j>0}v_{\delta+ 1_i, \lambda - 1_j}^h\partial_{v_{\delta,\lambda}^h}
\end{equation}
follows. The sum on the r.h.s. of \ref{eq:Dexpansion} runs over all repeated indices $h,\delta,\lambda$.
\end{prop}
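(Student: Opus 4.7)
Identity \ref{D_ijOnA_ij} is immediate from the definition $D_{i,j}=\partial_{A_{i,j}}$ in the trivial chart, since the $A_{i',j'}$ are independent coordinates on the base $\Gr(\uR,l)$. The substance is \ref{D_ijOnv's}. My strategy is to express $v^{h}_{\delta,\lambda}$ implicitly in terms of the trivial coordinates via the defining identity of the moving frame, then differentiate with respect to $A_{i,j}$ while holding the $u^{h}_{\Delta,\Lambda}$ fixed, which is precisely how $D_{i,j}$ acts in the trivial chart.

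Concretely, I would fix a point with trivial coordinates $(A,u)$ and let $P\in \Skp$ be the associated polynomial, so that $P^{h}=\sum_{\Delta,\Lambda}u^{h}_{\Delta,\Lambda}\,y^{\Delta}x^{\Lambda}/(\Delta!\Lambda!)$. By Definition \ref{defn:chart_v}, the adapted coordinates are characterized by the identity
\begin{equation*}
P^{h}\;=\;\sum_{\delta,\lambda}v^{h}_{\delta,\lambda}(A,u)\,\frac{(y-\sum Ax)^{\delta}x^{\lambda}}{\delta!}.
\end{equation*}
Applying $\partial_{A_{i,j}}$ with $u$ (equivalently, $P$) held fixed kills the left-hand side; on the right, the product rule combined with $\partial_{A_{i,j}}(y-\sum Ax)^{\delta}=-\delta_{i}\,x_{j}\,(y-\sum Ax)^{\delta-1_{i}}$ for $\delta_{i}>0$ (and zero otherwise) produces two sums whose vanishing is the identity I wish to exploit.

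Using $1/(\delta-1_{i})!=\delta_{i}/\delta!$ to absorb the factor $\delta_{i}$, reindexing $\delta\mapsto\delta+1_{i}$, $\lambda\mapsto\lambda-1_{j}$ (valid precisely when $\lambda_{j}\geq 1$), and equating coefficients against the basis $\{(y-\sum Ax)^{\delta}x^{\lambda}/\delta!\}$ of $\Skp$ yields exactly \ref{D_ijOnv's}. The coordinate expansion \ref{eq:Dexpansion} is then immediate: in any chart a vector field is determined by its actions on the coordinate functions, and by \ref{eq:V's_in_adaptedcoord} the coordinate fields $\partial_{v^{h}_{\delta,\lambda}}$ coincide with the $V^{h}_{\delta,\lambda}$, so only the nonzero actions from \ref{D_ijOnv's} contribute and they appear in the stated positions. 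The only potential obstacle is purely combinatorial bookkeeping: the divided-power normalization $1/\delta!$ on the adapted basis is exactly what is needed so that the differentiation-plus-reindexing lands cleanly on $v^{h}_{\delta+1_{i},\lambda-1_{j}}$ without extraneous factorial coefficients.
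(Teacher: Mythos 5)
Your argument is correct. The key step in both your proof and the paper's is the same: since $D_{i,j}=\partial_{A_{i,j}}$ in the trivial chart, one computes $D_{i,j}(v^{h}_{\delta,\lambda})$ by differentiating a change-of-basis relation with the $u^{h}_{\Delta,\Lambda}$ held fixed. You do this on the \emph{primal} side: you expand a fixed polynomial $P\in\Skp$ in the moving frame $\frac{1}{\delta!}(y-\sum Ax)^{\delta}x^{\lambda}\otimes e_{h}$, apply $\partial_{A_{i,j}}$ to both sides, use $\partial_{A_{i,j}}(y-\sum Ax)^{\delta}=-\delta_{i}x_{j}(y-\sum Ax)^{\delta-1_{i}}$, and equate coefficients after the reindexing $\delta\mapsto\delta+1_{i}$, $\lambda\mapsto\lambda-1_{j}$; the divided-power normalization indeed absorbs the factor $\delta_{i}$ exactly as you anticipate, and the cancellation of the minus sign works out so that the result is $+v^{h}_{\delta+1_{i},\lambda-1_{j}}$. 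The paper instead works on the \emph{dual} side: it sets up the pairing $S^{k+1}\uR\otimes S^{k+1}(\uR^{*})\to\mathbb{R}$, writes the coordinate functionals explicitly as $v^{h}_{\delta,\lambda}=\frac{1}{\lambda!}(y^{*})^{\delta}(x^{*}+Ay^{*})^{\lambda}\otimes e^{*}_{h}$, and differentiates that closed expression directly. The two computations are transposes of one another; yours is marginally more elementary in that it avoids introducing the duality isomorphism $S^{k+1}(\uR^{*})\cong(S^{k+1}\uR)^{*}$ and the explicit dual coframe, while the paper's yields the reusable closed formula for $v^{h}_{\delta,\lambda}$ as a bonus. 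Either way, the passage from \ref{D_ijOnA_ij} and \ref{D_ijOnv's} to the expansion \ref{eq:Dexpansion} is, as you say, just the fact that a vector field is determined by its action on a coordinate system.
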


\begin{proof}

Equation \ref{D_ijOnA_ij} is obvious if we recall that in the previous trivial coordinates the derivations $D_{i,j}$ were just the partial derivative with respect to $A_{i,j}$. 

To prove the second equation \ref{D_ijOnv's} we first express the $u^h_{\delta, \lambda}$ and $v^h_{\delta, \lambda}$ as sections of the dual $S^{k+1}\underline{R}\otimes N^*$ using the dual basis to $y_1, \ldots , y_d,x_1, \ldots , x_l\in\underline{R}^*$ and $e^{*}_1,\ldots ,e^{*}_m\in N^*$ and the natural isomorphism
\begin{equation}
S^{k+1}(\uR^{*})\cong(S^{k+1}\uR)^{*}\label{eq:identification of symmetric product of duals}
\end{equation}
induced from the non-degenerate pairing
\begin{equation}
S^{k+1}\uR \otimes S^{k+1}(\uR^{*}) \rightarrow \mathbb{R}
\end{equation}
 given by
\begin{equation}
w_{1}\cdot\ldots\cdot w_{k+1}\otimes\alpha_{1}\cdot\ldots\cdot\alpha_{k+1}  \mapsto  \sum_{\varsigma}\prod_{i=1}^{k+1}\langle w_{\varsigma(i)},\alpha_{i}\rangle
\end{equation}
where $\varsigma$ runs through all permutations of the set $\{1,\ldots,k+1\}$. If $r_{1},\ldots,r_{n}$ is a basis
of $\uR$ and the associated dual basis of $\uR^{*}$ is denoted with
$r_{1}^{*},\ldots,r_{n}^{*}$, then under identification \ref{eq:identification of symmetric product of duals}
the dual basis of $r^{\sigma}\in S^{k+1}\uR$ is mapped to $\frac{1}{\sigma!}(r^{*})^{\sigma}\in S^{k+1}\uR^{*}$.\par
So letting $y^*_1$, \ldots , $y^*_d$, $x^*_1$, \ldots , $x^*_l\in \underline{R}$ denote the basis dual to $y_1, \ldots , y_d,x_1, \ldots , x_l\in\underline{R}^*$ and $e^{*}_1,\ldots ,e^{*}_m\in N^*$ the one dual to $e_1,\ldots ,e_m\in N$, we have
\begin{equation}
u_{\delta,\lambda}^h=\left ( y^* \right )^\delta \left ( x^* \right )^\lambda \otimes e^{*}_h.
\end{equation}
Further, since the basis of $\underline{R}$ dual to the basis 
\begin{equation}
\left( y_1-\sum A_{1,j}x_j\right),\, \ldots ,\,  \left ( y_d-\sum A_{d,j}x_j\right ),\,  x_1,\, \ldots ,\, x_l
\end{equation}
of $R^*$ is given by 
\begin{equation}
y^*_1,\, \ldots, \, y^*_d,\,  \left( x^*_1+\sum A_{i,1}y^*_i \right), \, \ldots, \, \left ( x^*_l+\sum A_{i,l}y^*_i \right )
\end{equation}
we have
\begin{equation}\label{dualbasis}
v_{\delta,\lambda}^h=\frac{1}{\lambda!} \left ( y^* \right )^\delta \left ( x^*+   A y^* \right )^\lambda \otimes e^{*h}
\end{equation}
again by \ref{eq:identification of symmetric product of duals} and since the $v_{\delta,\lambda}^h$ are by definition dual to the basis $\lambda!\frac{1}{\delta!\lambda!}(y-\sum Ax)^\delta x^\lambda$. By expanding the powers on the r.h.s. of \ref{dualbasis} we could express the coordinates $v_{\delta,\lambda}^h$ as linear combinations of the $u_{\delta,\lambda}$ with coefficients depending on the variables $A_{i,j}$. We shall not do this, instead we recall again that in the coordinates $u_{\delta,\lambda},A_{i,j}$ the derivations $D_{i,j}$ act as partial derivative with respect to $A_{i,j}$. Hence applying the chain rule we can compute

\begin{align}
D_{i,j}(v_{\delta,\lambda}^h) &=\frac{\partial}{\partial {A_{i,j}}}\left ( \frac{1}{\lambda!} \left ( y^* \right )^\delta \left ( x^*+  A y^* \right )^\lambda  \otimes e^*_h \right ) \\
&= \lambda_j y^*_i \frac{1}{\lambda!} \left ( y^* \right )^\delta \left ( x^*+   A y^* \right )^{\lambda-1_j}\otimes e^*_h \\
&= \begin{cases}
			\frac{1}{(\lambda-1_j)!} \left ( y^* \right )^{\delta+1_i } \left ( x^*+   A y^* \right )^{\lambda-1_j} \otimes e^*_h &\mbox{if } \lambda_j>0 \\
			0 & \mbox{else } 
		\end{cases}\\
&= \begin{cases}
			v_{\delta+ 1_i, \lambda - 1_j}^h  &\mbox{if } \lambda_j>0 \\
			0 & \mbox{else } 
		\end{cases} .
\end{align}
\end{proof}

\begin{defn}\label{defn:truncuatedD}
The \emph{$q$-truncated homogeneous total derivatives} are the vector fields on $M^{q}$ (where $q\geq 0$) defined in local adapted coordinates by
\begin{equation}\label{eq:truncuatedD}
D_{i,j}^{[q]}:=\partial_{A_{i,j}}+\sum_{\substack{  |\delta|<q \\\lambda_j>0 }}v_{\delta+ 1_i, \lambda - 1_j}^h\partial_{v_{\delta,\lambda}^h} .
\end{equation}
\end{defn}
It is clear that $D_{i,j}^{[k+1]}=D_{i,j}$.
\begin{lem}\label{lem:commutatiors of reduced distr}
\begin{itemize}
\item[a)] The fields $\left \{ \partial_{v_{\delta,\lambda}^h}, D_{i,j}^{[q]}\right \}$ with $|\delta| \leq q$ make up a frame on $M^{q}$.
\item[b)] Commutators of this frame are all zero except for the commutators 
\begin{equation}\label{eq:commutatorsrtrunc}
\left[\partial_{v_{\delta,\lambda}^h},D_{i,j}^{[q]}\right]=\partial_{v_{\delta-1_i,\lambda+1_j}^h}
\end{equation}
when $|\delta_{i}|>0$.
\item[c)] The vertical distribution of $M^{q}\to M^{q-1}$ is spanned by $\partial_{v_{\delta,\lambda}^h}$ with $|\delta|=q$.
\item[d)] The fields $\left\{ \partial_{v_{\delta,\lambda}^h}, D_{i,j}^{[q]}\right \}$ with $|\delta|=q$ form a local basis of $\uF{q}$ and split it into vertical and \emph{horizontal} part. 
\end{itemize}
\end{lem}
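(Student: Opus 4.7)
\emph{Proof plan.} The plan is to verify the four statements by direct coordinate computations in the adapted chart $A_{i,j}, v_{\delta,\lambda}^h$ (with $|\delta|\leq q$) on $M^q$, building on Proposition \ref{prop:D expansion in coord} and the explicit form \ref{eq:truncuatedD} of the truncated derivations. Each of (a), (c), (d) is a short structural argument, while (b) is a careful bookkeeping with the truncation thresholds.

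For part (a), observe that \ref{eq:truncuatedD} exhibits each $D_{i,j}^{[q]}$ as $\partial_{A_{i,j}}$ plus a linear combination of the $\partial_{v_{\delta,\lambda}^h}$ with $|\delta|<q$. The change-of-basis matrix from the coordinate frame $\{\partial_{A_{i,j}}, \partial_{v_{\delta,\lambda}^h}\}_{|\delta|\leq q}$ to $\{D_{i,j}^{[q]}, \partial_{v_{\delta,\lambda}^h}\}_{|\delta|\leq q}$ is therefore unipotent, hence invertible, which yields the desired frame on $M^q$. For part (c), the projection $M^q\to M^{q-1}$ precisely forgets the coordinates $v_{\delta,\lambda}^h$ with $|\delta|=q$, so its vertical distribution is spanned by the corresponding coordinate fields. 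For part (d), Theorem \ref{thm:charac_sym_of_flag_dist} identifies $\uF{q}$ as the descent of $\F^{p}/\V^{p}$ to $M^q$ with $p=k+1-q$. A frame of $\F^{p}$ on $\Iln$ is $\{D_{i,j}\}\cup\{V_{\delta,\lambda}^h : |\lambda|\leq p\}$, while $\V^p$ is generated by the subfamily $\{V_{\delta,\lambda}^h: |\lambda|<p\} = \{V_{\delta,\lambda}^h : |\delta|>q\}$. Under the tangent map of $\Iln\to M^q$ the latter family is killed, the $V_{\delta,\lambda}^h$ with $|\delta|=q$ descend to the coordinate fields $\partial_{v_{\delta,\lambda}^h}$, and by expansion \ref{eq:Dexpansion} the $D_{i,j}$ descend to $D_{i,j}^{[q]}$ modulo the vertical of $M^q\to M^{q-1}$. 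Combined with part (c), this produces both the claimed frame of $\uF{q}$ and its splitting into the horizontal part $D_{i,j}^{[q]}$ and the vertical part $\partial_{v_{\delta,\lambda}^h}$ with $|\delta|=q$.

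The main work is part (b), which requires checking three types of commutators. The brackets $[\partial_{v_{\delta,\lambda}^h}, \partial_{v_{\delta',\lambda'}^{h'}}]$ vanish trivially as these are coordinate fields. For $[\partial_{v_{\delta,\lambda}^h}, D_{i,j}^{[q]}]$, the action on any $A_{i',j'}$ is trivial, while on a coordinate $v_{\delta',\lambda'}^{h'}$ the only nonzero contribution comes from the summand of \ref{eq:truncuatedD} whose coefficient $v^h_{\alpha+1_i,\beta-1_j}$ matches $v_{\delta,\lambda}^h$, namely $\alpha=\delta-1_i$, $\beta=\lambda+1_j$; this yields $\partial_{v_{\delta-1_i,\lambda+1_j}^h}$ exactly when $\delta_i>0$ (the required index bound $|\delta-1_i|<q$ is automatic since $|\delta|\leq q$, and $(\lambda+1_j)_j>0$ is automatic), recovering \ref{eq:commutatorsrtrunc}. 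Finally, for $[D_{i,j}^{[q]}, D_{i',j'}^{[q]}]$ one computes both iterated actions on a coordinate $v_{\delta,\lambda}^h$: where defined, both compositions yield $v^h_{\delta+1_i+1_{i'}, \lambda-1_j-1_{j'}}$ by the commutativity of the multi-index increments. The principal obstacle, and the only nontrivial aspect of the argument, is to verify that the truncation conditions $|\alpha|<q$ and $\beta_{j}>0$ (respectively with $j$ replaced by $j'$) in the two expansions of $D_{i,j}^{[q]}$ and $D_{i',j'}^{[q]}$ cut off the two orderings at exactly the same places; one checks by direct case analysis that the boundary cases $|\delta|\in\{q-1,q\}$ and $\lambda_j=0$ or $\lambda_{j'}=0$ each force both orderings to vanish simultaneously, so that the commutator vanishes identically on $M^q$.
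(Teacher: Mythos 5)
Your proposal is correct and fills in exactly the computation the paper summarizes as ``straightforward from the definitions and the previous results'': the unipotent change of frame for (a), the descent of $\F^{p}$ modulo $\V^{p}$ for (c)--(d), and the bookkeeping of the truncation thresholds in \ref{eq:truncuatedD} for (b). The only cosmetic point is that in checking $[D_{i,j}^{[q]},D_{i',j'}^{[q]}]=0$ the relevant second-step condition is $(\lambda-1_{j'})_{j}>0$, so when $j=j'$ the boundary case is $\lambda_{j}=1$ rather than $\lambda_{j}=0$; both orderings still vanish simultaneously there, so the conclusion stands.
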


\begin{proof}
Straightforward from the definitions and the previous results.
\end{proof}
\begin{cor}\label{lem:splitting uF}
For $q=0,\ldots , k+1$ any plane $Q\subset \uF{q}_{\phiq}$ of maximal dimension and horizontal to $M^{q}\to M^{q-1}$ has a basis of the form
\begin{equation}\label{eq:basis of generic Q}
C_{i,j}:=D_{i,j}^{[q]}+\sum_{|\delta|=q}C^{\delta, \lambda}_{i,j,h}\partial_{v_{\delta,\lambda}^h}
\end{equation}
with unique coefficients $C^{\delta, \lambda}_{i,j,h}$. It is hence of dimension $\dim\Gr(\uR,l)$ and horizontal to the projection $M^{q}\to \Gr(\uR,l)$.
\end{cor}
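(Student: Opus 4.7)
The plan is to derive the statement directly from the adapted frame description of $\uF{q}$ given in Lemma \ref{lem:commutatiors of reduced distr}. By part (d) of that lemma, $\uF{q}_{\phiq}$ splits as the direct sum of the vertical subspace $V:=\uF{q}_{\phiq}\cap V\Pi_{q,q-1}$, which by part (c) is spanned by $\{\partial_{v^h_{\delta,\lambda}}\,:\,|\delta|=q\}$, and the horizontal complement $H:=\langle D^{[q]}_{i,j}\rangle$. The horizontality assumption on $Q$ is precisely $Q\cap V = 0$, so I first note that $Q$ injects into $\uF{q}_{\phiq}/V \cong H$ under the natural projection along $V$.

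Next I do a dimension count. The horizontal complement $H$ has dimension equal to the number of pairs $(i,j)$ with $i=1,\ldots,d$ and $j=1,\ldots,l$, i.e.\ $d\cdot l=\dim \Gr(\uR,l)$. Maximality of $Q$ among horizontal subspaces of $\uF{q}_{\phiq}$ forces $\dim Q = \dim H = \dim\Gr(\uR,l)$, so the projection $Q\to H$ along $V$ is an isomorphism. For each $(i,j)$ there is therefore a unique preimage $C_{i,j}\in Q$ of $D^{[q]}_{i,j}$, and expanding the vertical discrepancy $C_{i,j}-D^{[q]}_{i,j}\in V$ in the basis $\{\partial_{v^h_{\delta,\lambda}}:|\delta|=q\}$ produces the asserted expression \eqref{eq:basis of generic Q}; the coefficients $C^{\delta,\lambda}_{i,j,h}$ are unique since the $\partial_{v^h_{\delta,\lambda}}$ with $|\delta|=q$ form a basis of $V$.

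For the final horizontality claim, I apply the tangent map of $M^{q}\to \Gr(\uR,l)$: by the explicit coordinate formula \eqref{eq:truncuatedD} for $D^{[q]}_{i,j}$, this projection sends $\partial_{v^h_{\delta,\lambda}}\mapsto 0$ and $D^{[q]}_{i,j}\mapsto\partial_{A_{i,j}}$. Hence each $C_{i,j}$ projects to $\partial_{A_{i,j}}$, and linear independence of $\{\partial_{A_{i,j}}\}$ shows that $Q$ maps isomorphically onto $T\Gr(\uR,l)$, so $Q$ is horizontal with respect to $M^{q}\to\Gr(\uR,l)$ as claimed. I do not expect any real obstacle: the whole argument is a linear-algebraic unpacking of the splitting already established in Lemma \ref{lem:commutatiors of reduced distr}(c)-(d), combined with the explicit action of $D^{[q]}_{i,j}$ on the base coordinates $A_{i,j}$.
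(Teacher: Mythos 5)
Your argument is correct and is exactly the linear-algebraic unpacking the paper intends (it leaves the corollary unproved as an immediate consequence of Lemma \ref{lem:commutatiors of reduced distr}): the splitting into the vertical span of the $\partial_{v^h_{\delta,\lambda}}$ with $|\delta|=q$ and the horizontal span of the $D^{[q]}_{i,j}$, the dimension count forcing $\dim Q=\dim\Gr(\uR,l)$, and the projection of $D^{[q]}_{i,j}$ to $\partial_{A_{i,j}}$ for the last claim are all as in the paper. No gaps.
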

\begin{defn}
We denote the curvature form of $\uF{q}$ with $\Omega^{[q]}$. We may compute with it directly by using commutators \ref{eq:commutatorsrtrunc}.
\end{defn} 
\begin{lem}\label{lem:coefficents of integral Q's}
For $q=1,\ldots , k+1$ a horizontal plane $Q\subset \uF{q}_{\phiq}$ of dimension $\Gr(\uR,l)$ is an integral element of $\uF{q}$ if and only if the coefficients $C^{\delta, \lambda}_{i,j,h}$ of its basis \ref{eq:basis of generic Q} satisfy
\begin{equation}\label{conditionC1}
C^{\delta,\lambda}_{i,j,h} = C^{\delta',\lambda'}_{i',j',h} 
\end{equation}
whenever the indices satisfy
\begin{align}
\delta_{i'}>0,& \; \delta'_{i}>0 \label{conditionC1a}\\ 
\delta-1_{i'}&=\delta'-1_{i} \label{conditionC1b} \\
 \lambda+1_{j'}&=\lambda'+1_{j} \label{conditionC1c}
\end{align}
and condition
\begin{equation}
C^{\delta,\lambda}_{i,j,h} = 0 \quad \text{whenever} \quad \lambda_{j}=0 \quad \text{and} \quad l>1. \label{conditionC2}
\end{equation}
\end{lem}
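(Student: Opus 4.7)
I would characterize integrality via the curvature form $\Omega^{[q]}$: the plane $Q$ spanned by the $C_{i,j}$ of Corollary~\ref{lem:splitting uF} is integral iff $[C_{i,j},C_{i',j'}]_{\phiq}\in\uF{q}_{\phiq}$ for all indices. Extending each $C_{i,j}$ to a local section of $\uF{q}$ by taking its coefficients $C^{\delta,\lambda}_{i,j,h}$ as \emph{constants} in the adapted chart, the bracket becomes a pure combination of frame commutators. By Lemma~\ref{lem:commutatiors of reduced distr}(b) the only nonzero commutators of the frame are $[\partial_{v^h_{\delta,\lambda}},D^{[q]}_{i,j}]=\partial_{v^h_{\delta-1_i,\lambda+1_j}}$ when $\delta_i>0$, so a direct expansion gives
\begin{equation*}
[C_{i,j},C_{i',j'}] \;=\; \sum_{\substack{|\delta|=q\\ \delta_{i'}>0}} C^{\delta,\lambda}_{i,j,h}\,\partial_{v^h_{\delta-1_{i'},\lambda+1_{j'}}} \;-\; \sum_{\substack{|\delta|=q\\ \delta_i>0}} C^{\delta,\lambda}_{i',j',h}\,\partial_{v^h_{\delta-1_i,\lambda+1_j}},
\end{equation*}
summed also over the repeated indices $h$ and $\lambda$ with $|\lambda|=k+1-q$.

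Every vertical field on the right has first index of modulus $q-1$, hence by part (d) of Lemma~\ref{lem:commutatiors of reduced distr} is complementary to $\uF{q}$ inside $TM^q$. So integrality is equivalent to the vanishing of the coefficient of each individual $\partial_{v^h_{\tilde\delta,\tilde\lambda}}$. Reindex the first sum via $\tilde\delta=\delta-1_{i'}$, $\tilde\lambda=\lambda+1_{j'}$ (a valid substitution exactly when $\tilde\lambda_{j'}>0$) and the second via $\tilde\delta=\delta-1_i$, $\tilde\lambda=\lambda+1_j$ (valid when $\tilde\lambda_j>0$). For each fixed $(\tilde\delta,\tilde\lambda,h)$ with $|\tilde\delta|=q-1$, the resulting coefficient compares $C^{\tilde\delta+1_{i'},\tilde\lambda-1_{j'}}_{i,j,h}$, present only when $\tilde\lambda_{j'}>0$, with $C^{\tilde\delta+1_i,\tilde\lambda-1_j}_{i',j',h}$, present only when $\tilde\lambda_j>0$.

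The final step is a short case analysis on which of $\tilde\lambda_{j'}>0$ and $\tilde\lambda_j>0$ hold. When both hold, setting $\delta=\tilde\delta+1_{i'}$, $\lambda=\tilde\lambda-1_{j'}$, $\delta'=\tilde\delta+1_i$, $\lambda'=\tilde\lambda-1_j$ one checks directly that \ref{conditionC1a}, \ref{conditionC1b}, \ref{conditionC1c} are satisfied, so the vanishing of the coefficient is exactly \ref{conditionC1}. When exactly one of the two holds, the surviving $C$-term must vanish on its own; tracking the reindexing shows that $\lambda_j=0$ (respectively $\lambda'_{j'}=0$) for the relevant multi-index, yielding condition \ref{conditionC2}, and the situation forces $j\neq j'$, hence arises only when $l>1$, accounting for the proviso. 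When neither holds the identity is trivial. The converse direction is the same bookkeeping run in reverse: \ref{conditionC1} and \ref{conditionC2} together make every coefficient of every $\partial_{v^h_{\tilde\delta,\tilde\lambda}}$ vanish. The main obstacle, while routine, is the correct reindexing and ensuring the case analysis is tight — in particular keeping track of where the hypothesis $l>1$ is genuinely used.
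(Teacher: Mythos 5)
Your proposal is correct and follows essentially the same route as the paper: expand $\Omega^{[q]}(C_{i,j},C_{i',j'})$ via the frame commutators of Lemma \ref{lem:commutatiors of reduced distr}, reindex the two sums by $\tilde\delta=\delta-1_{i'},\ \tilde\lambda=\lambda+1_{j'}$ (resp. $\delta-1_i,\ \lambda+1_j$), and equate coefficients of the independent fields $\partial_{v^h_{\tilde\delta,\tilde\lambda}}$ with $|\tilde\delta|=q-1$, with the mixed cases $\tilde\lambda_{j'}>0,\ \tilde\lambda_j=0$ (and vice versa) yielding \ref{conditionC2} and occurring only for $j\neq j'$, hence only when $l>1$. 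This matches the paper's computation step for step, including the justification of the $l>1$ proviso.
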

\begin{proof}
The plane $Q$ is integral if and only if
\begin{equation}\label{eq:conditions on basis}
\Omega^{[q]}(C_{i,j},C_{i',j'})=0
\end{equation}
for all $i,j,i',j'$.
Expanding the left hand side of \ref{eq:conditions on basis} leads to
\begin{equation}\label{involutivityconditions1}
\sum_{
	\substack{
		|\delta|=q\\
		\delta_{i'}>0
		}
	}C^{\delta,\lambda}_{i,j,h}\partial_{v^{h}_{\delta-1_{i'},\lambda+1_{j'}}}
	-\sum_{
		\substack{
			|\delta|=q\\
			\delta_{i}>0
		}
	}C^{\delta,\lambda}_{i',j',h}\partial_{v^{h}_{\delta-1_{i},\lambda+1_{j}}}=0 .
\end{equation}
Changing indices in the first sum to $\Delta = \delta - 1_{i'}$, $\Lambda = \lambda + 1_{j'}$ and in the second to $\Delta = \delta - 1_{i}$, $\Lambda = \lambda + 1_{j}$ transforms equation \ref{involutivityconditions1}  to
\begin{equation}
\sum_{
	\substack{
		|\Delta|=q-1\\
		\Lambda_{j'}>0
		}
	}C^{\Delta+1_{i'},\Lambda-1_{j'}}_{i,j,h}\partial_{v^{h}_{\Delta,\Lambda}}
	-\sum_{
		\substack{
			|\Delta|=q-1\\
			\Lambda_{j}>0
		}
	}C^{\Delta+1_{i},\Lambda-1_{j}}_{i',j',h}\partial_{v^{h}_{\Delta,\Lambda}}=0	 .
\end{equation}
Collecting bases we find
\begin{multline}\label{eq: comp involutivity conditions}
\sum_{
	\substack{
		|\Delta|=q-1\\
		\Lambda_{j'}>0\\
		\Lambda_{j}>0
		}
	} \left( C^{\Delta+1_{i'},\Lambda-1_{j'}}_{i,j,h} - C^{\Delta+1_{i},\Lambda-1_{j}}_{i',j',h} \right) \partial_{v^{h}_{\Delta,\Lambda}} +\\
+\sum_{
	\substack{
		|\Delta|=q-1\\
		\Lambda_{j'}>0\\
		\Lambda_{j}=0
		}
	}C^{\Delta+1_{i'},\Lambda-1_{j'}}_{i,j,h}\partial_{v^{h}_{\Delta,\Lambda}}
+ \sum_{
		\substack{
			|\Delta|=q-1\\
			\Lambda_{j}>0\\
			\Lambda_{j'}=0
		}
	} C^{\Delta+1_{i},\Lambda-1_{j}}_{i',j',h}\partial_{v^{h}_{\Delta,\Lambda}}=0	.
\end{multline}
Equating coefficients to zero and returning to the previous indices we find conditions  \ref{conditionC1} from the first summand of \ref{eq: comp involutivity conditions}, while from the second and third summands (which are only present when $l>1$) we find condition \ref{conditionC2}.
\end{proof}
\begin{lem}\label{lem:conditions on coefficients of Qphi}
For $q=1,\ldots , k+1$ a horizontal plane $Q\subset \uF{q-1}_{\phiql}$ is of the form $Q_{\phiq}$ for some $\phiq \in M^{q}_{\phiql}$ if and only if the coefficients $C^{\delta, \lambda}_{i,j,h}$ of its basis \ref{eq:basis of generic Q} satisfy
\begin{equation}\label{conditionC1'}
C^{\delta,\lambda}_{i,j,h} = C^{\delta',\lambda'}_{i',j',h} 
\end{equation}
whenever the indices satisfy
\begin{align}
\lambda_{j}>0,& \; \lambda'_{j'}>0 \label{conditionC1a'}\\ 
\delta+1_{i}&=\delta'+1_{i'} \label{conditionC1b'} \\
 \lambda-1_{j}&=\lambda'-1_{j'} \label{conditionC1c'}
\end{align}
and condition
\begin{equation}
C^{\delta,\lambda}_{i,j,h} = 0 \quad \text{whenever} \quad \lambda_{j}=0. \label{conditionC2'}
\end{equation}
\end{lem}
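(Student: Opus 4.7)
The plan is to make the projection $T_{\phi_{q}}\Pi_{q,q-1}$ completely explicit in adapted coordinates and read off the coefficients $C_{i,j,h}^{\delta,\lambda}$ directly from the values of the fiber coordinates $v_{\Delta,\Lambda}^{h}$ at $\phi_{q}$. Then the conditions \ref{conditionC1'}--\ref{conditionC2'} will be recognized as precisely the well-definedness conditions for reconstructing $\phi_{q}$ from a horizontal integral plane.

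First, by lemma \ref{lem:commutatiors of reduced distr}(d), $\uF{q}_{\phi_{q}}$ is spanned by the horizontal fields $D_{i,j}^{[q]}$ together with the verticals $\partial_{v_{\delta,\lambda}^{h}}$ with $|\delta|=q$; the latter span the vertical distribution of $\Pi_{q,q-1}$. Consequently $Q_{\phi_{q}}$ is spanned by the images $T_{\phi_{q}}\Pi_{q,q-1}(D_{i,j}^{[q]})$. Since the projection simply forgets the coordinates $v_{\Delta,\Lambda}^{h}$ with $|\Delta|=q$ while evaluating them at $\phi_{q}$, the expansion \ref{eq:truncuatedD} gives
\begin{equation}
T_{\phi_{q}}\Pi_{q,q-1}(D_{i,j}^{[q]})=D_{i,j}^{[q-1]}\bigr|_{\phi_{q-1}}+\sum_{\substack{|\delta|=q-1\\\lambda_{j}>0}}v_{\delta+1_{i},\lambda-1_{j}}^{h}(\phi_{q})\,\partial_{v_{\delta,\lambda}^{h}}\bigr|_{\phi_{q-1}}.
\end{equation}
Comparing with the normal form \ref{eq:basis of generic Q} for $q-1$, this identifies the coefficients of $Q_{\phi_{q}}$ as
\begin{equation}
C_{i,j,h}^{\delta,\lambda}=\begin{cases}v_{\delta+1_{i},\lambda-1_{j}}^{h}(\phi_{q}) & \text{if }\lambda_{j}>0,\\ 0 & \text{if }\lambda_{j}=0,\end{cases}\qquad|\delta|=q-1.
\end{equation}

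The forward direction is now immediate. The vanishing clause gives \ref{conditionC2'}. For the symmetry clause, whenever indices satisfy \ref{conditionC1a'}--\ref{conditionC1c'}, the tuples $(\delta+1_{i},\lambda-1_{j})$ and $(\delta'+1_{i'},\lambda'-1_{j'})$ are equal as multi-indices, so both sides are the same value $v_{\delta+1_{i},\lambda-1_{j}}^{h}(\phi_{q})$, yielding \ref{conditionC1'}.

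For the converse, given coefficients $C_{i,j,h}^{\delta,\lambda}$ satisfying \ref{conditionC1'}--\ref{conditionC2'}, one must construct $\phi_{q}\in M^{q}_{\phi_{q-1}}$, i.e.\ define the missing values $v_{\Delta,\Lambda}^{h}(\phi_{q})$ for $|\Delta|=q$. The natural prescription is
\begin{equation}
v_{\Delta,\Lambda}^{h}(\phi_{q}):=C_{i,j,h}^{\Delta-1_{i},\Lambda+1_{j}}
\end{equation}
for any $i$ with $\Delta_{i}>0$ and any $j$; note that $(\Lambda+1_{j})_{j}>0$ automatically, so the r.h.s.\ is drawn from the symmetry case of the conditions. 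Well-definedness with respect to the choice of $(i,j)$ is exactly the content of \ref{conditionC1'} under the reparametrization $\delta=\Delta-1_{i}$, $\lambda=\Lambda+1_{j}$, which matches the index identifications \ref{conditionC1b'}--\ref{conditionC1c'}. Plugging this $\phi_{q}$ back into the formula for $Q_{\phi_{q}}$ recovers the given $C_{i,j,h}^{\delta,\lambda}$ in both the $\lambda_{j}>0$ case (by construction) and the $\lambda_{j}=0$ case (by \ref{conditionC2'}). The main obstacle is purely bookkeeping: keeping the index shifts $\pm 1_{i},\pm 1_{j}$ between the $\delta$ and $\lambda$ slots consistent with the two roles those indices play in the formula for $D_{i,j}^{[q]}$; once that shift is tracked carefully, the statement is essentially tautological.
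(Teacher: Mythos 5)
Your proposal is correct and follows essentially the same route as the paper's own proof: identify the coefficients of $Q_{\phiq}$ as $C^{\delta,\lambda}_{i,j,h}=v^{h}_{\delta+1_i,\lambda-1_j}(\phiq)$ (zero when $\lambda_j=0$) by projecting the $D^{[q]}_{i,j}$, then invert this in the converse via $v^{h}_{\Delta,\Lambda}:=C^{\Delta-1_i,\Lambda+1_j}_{i,j,h}$ with well-definedness supplied by \ref{conditionC1'}. No gaps.
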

\begin{proof}
We start by showing that the basis of a plane $Q_{\phiq}$ satisfies \ref{conditionC1'} and \ref{conditionC2'}. By lemma \ref{lem:commutatiors of reduced distr} the plane $\uF{q}_{\phiq}$ is spanned by the fields $D_{i,j}^{[q]}$ and vertical fields $\partial_{v_{\delta,\lambda}^h}$ with $|\delta|=q$. The vertical ones are annihilated when projecting to $M^{q-1}$ while the $D_{i,j}^{[q]}$ are mapped to 
\begin{equation}\label{eq:basisofQplanes}
C_{i,j}:=D_{i,j}^{[q-1]}+\sum_{\substack{|\delta|=q-1 \\ \lambda_j >0}}v_{\delta+ 1_i, \lambda - 1_j}^h\partial_{v_{\delta,\lambda}^h}
\end{equation} 
where now the numbers $v_{\delta+ 1_i, \lambda - 1_j}^h$ on the r.h.s of \ref{eq:basisofQplanes} are to be understood as the coordinates of the point $\phiq$ in the fiber over $\phiql$. Vectors \ref{eq:basisofQplanes} are a basis of $Q_{\phiq}$ of the form \ref{eq:basis of generic Q} with $C^{\delta,\lambda}_{i,j,h}=v_{\delta+ 1_i, \lambda - 1_j}^h$. It is straightforward to see that these coefficients satisfy \ref{conditionC1'} and \ref{conditionC2'}.\par
Conversely suppose the basis $C_{i,j}$ of a plane $Q\subset \uF{q-1}_{\phiql}$ satisfies conditions \ref{conditionC1'} and \ref{conditionC2'}. We need to find a point $\phiq \in M^{q}_{\phiql}$ such that $Q=Q_{\phiq}$. 
For any multiindex $(\Delta,\Lambda)\in \mathbb{N}^d \times \mathbb{N}^l$ with $|\Delta|=q$, $|\Delta|+|\Lambda|=k+1$ and any $h\in {1,\ldots, m}$ define the numbers
\begin{equation}\label{coordinatesofphiq}
v^{h}_{\Delta,\Lambda}:=C^{\Delta-1_{i},\Lambda+1_{j}}_{i,j,h}
\end{equation}
where we choose $i$ in such a way that $\Delta_{i}> 0$, which is always possible since $|\Delta| \geq 1$. By \ref{conditionC1'}  this definition is independent of the choices of $i,j$. By further taking into consideration condition   \ref{conditionC2'} we see that
\begin{equation}
C_{i,j}=D_{i,j}^{[q-1]}+\sum_{\substack{|\delta|=q-1\\ \lambda_j >0}}v_{\delta+ 1_i, \lambda - 1_j}^h\partial_{v_{\delta,\lambda}^h}
\end{equation}
which by \ref{eq:basisofQplanes} proves that $Q$ is of the form $Q_{\phiq}$ with the point $\phiq \in M^{q}_{\phiql}$ determined by the fiber coordinates \ref{coordinatesofphiq}.

\end{proof}
We are now in the position to easily prove propositions \ref{prop:prolongation1},\ref{prop:prolongation2} and \ref{prop:prolongation3}.
\begin{proof}[Proof of proposition \ref{prop:prolongation1}]
By lemma \ref{lem:conditions on coefficients of Qphi} the basis $C_{i,j}$ of $Q_{\phiq}$ satisfies conditions \ref{conditionC1'} and \ref{conditionC2'}, which for $q>1$ are the same as conditions \ref{conditionC1} and \ref{conditionC2} of lemma \ref{lem:coefficents of integral Q's}, hence $Q_{\phiq}$ is integral. When $q=1$, $Q_{\phi_{1}}$ is integral since $\uF{0}=TM^{0}$.
\end{proof}

\begin{proof}[Proof of proposition \ref{prop:prolongation2}]
If $\phiq \neq \tilde{\phiq}$ are two distinct points over $\phiql$ there must be indices $\delta,\lambda,h$ such that the corresponding fiber coordinates of the points differ $v_{\delta,\lambda}^{h}\neq \tilde{v}_{\delta,\lambda}^{h}$. Since $q>0$ there is an $i$ such that $\delta_{i}\neq 0$. Then the coefficients in front of $\partial_{v_{\delta-1_{i},\lambda+i_{j}}^{h}}$ in the bases \ref{eq:basisofQplanes} of $Q_{\phiq}$ and $Q_{\tilde{\phiq}}$ differ, hence $Q_{\phiq}\neq Q_{\tilde{\phiq}}$ by uniqueness of the bases $C_{i,j}$.
\end{proof}

\begin{proof}[Proof of proposition \ref{prop:prolongation3}]
For the range of indices $q$ under consideration conditions \ref{conditionC1} and \ref{conditionC2} of lemma \ref{lem:coefficents of integral Q's} coincide with conditions \ref{conditionC1'} and \ref{conditionC2'} of lemma \ref{lem:conditions on coefficients of Qphi} hence an integral $Q$ is of the form $Q_{\phiq}$.
\end{proof}
\subsection{Identifying the pasting conditions in the tower}
Finally, the PDEs we called infinitesimal pasting conditions \ref{infpaste1} and \ref{infpaste2} are encoded in $(M^{1},\uF{1})$ as follows.
\begin{prop} \label{prop:prolongation4}
The image of the map 
\begin{equation}
\phi_{1} \mapsto Q_{\phi_{1}},
\end{equation}
understood in the obvious way as a subset of the first order jet space of the bundle $\pJ \to \Gr(\uR,l)$, is precisely the zero set of the infinitesimal pasting conditions \ref{infpaste1} and \ref{infpaste2}.
\end{prop}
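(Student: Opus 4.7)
The plan is to express the map $\phi_{1}\mapsto Q_{\phi_{1}}$ in local adapted coordinates and then match the image conditions coming from Lemma \ref{lem:conditions on coefficients of Qphi} at $q=1$ with the infinitesimal pasting equations \ref{infpaste1}--\ref{infpaste2} term by term. No new ingredients are needed beyond the coordinate descriptions already established.

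First I would unwind the standard identification of $J^{1}(\Dir)$ above $\phi_{0}\in \pJ$ with horizontal $n$-planes in $T_{\phi_{0}}M^{0}$. Writing $v^{h}_{\lambda}:=v^{h}_{0,\lambda}$ for the fibre coordinates on $M^{0}=\pJ$ (with $|\lambda|=k+1$) and $v^{h}_{\lambda;i,j}$ for the standard first-order jet coordinates associated to the base direction $\partial_{A_{i,j}}$, the horizontal plane with basis $\partial_{A_{i,j}}+\sum_{\lambda,h}a^{h,\lambda}_{i,j}\,\partial_{v^{h}_{\lambda}}$ corresponds to the jet whose first-order coordinates are $v^{h}_{\lambda;i,j}=a^{h,\lambda}_{i,j}$. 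Applying Corollary \ref{lem:splitting uF} at level $q=0$ (using $\uF{0}=TM^{0}$) shows every horizontal plane $Q\subset T_{\phi_{0}}M^{0}$ of maximal dimension has a unique basis $C_{i,j}=\partial_{A_{i,j}}+\sum_{\lambda,h}C^{0,\lambda}_{i,j,h}\,\partial_{v^{h}_{\lambda}}$. Hence the \virg{obvious} embedding $M^{1}\hookrightarrow J^{1}(\Dir)$ is, in coordinates, $v^{h}_{\lambda;i,j}=C^{0,\lambda}_{i,j,h}$.

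Second, Lemma \ref{lem:conditions on coefficients of Qphi} with $q=1$ characterizes the image: such a plane $Q$ equals $Q_{\phi_{1}}$ for some $\phi_{1}\in M^{1}$ iff its coefficients satisfy \ref{conditionC1'} and \ref{conditionC2'}. Specializing to $\delta=\delta'=0$ collapses condition \ref{conditionC1b'} to $i=i'$, and the whole system reduces to
\begin{align*}
C^{0,\lambda}_{i,j,h}&=C^{0,\lambda'}_{i,j',h}\quad\text{whenever } \lambda_{j},\lambda'_{j'}>0 \text{ and } \lambda-1_{j}=\lambda'-1_{j'},\\
C^{0,\lambda}_{i,j,h}&=0\quad\text{whenever } \lambda_{j}=0,
\end{align*}
which under the dictionary $v^{h}_{\lambda;i,j}=C^{0,\lambda}_{i,j,h}$ are verbatim the infinitesimal pasting conditions \ref{infpaste1} and \ref{infpaste2}.

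I do not anticipate a serious obstacle: the statement is a careful translation between two labelings for the same first-order data on $\pJ$. The only bookkeeping point is that \ref{infpaste1} implicitly requires $\lambda_{j}\geq 1$ and $\lambda'_{j'}\geq 1$ for $\lambda-1_{j}$ and $\lambda'-1_{j'}$ to be valid multi-indices, and this positivity is supplied exactly by \ref{conditionC1a'} when $\delta=\delta'=0$; matching the two systems in both directions then yields the equality of image and zero set.
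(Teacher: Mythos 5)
Your proposal is correct and follows essentially the same route as the paper: translate horizontal planes over $\phi_{0}\in M^{0}$ into first-order jet coordinates via the unique basis $C_{i,j}=\partial_{A_{i,j}}+\sum C^{0,\lambda}_{i,j,h}\partial_{v^{h}_{0,\lambda}}$, then specialize Lemma \ref{lem:conditions on coefficients of Qphi} to $q=1$, $\delta=\delta'=0$ (forcing $i=i'$) to recover \ref{infpaste1}--\ref{infpaste2} verbatim. The only blemish is calling the horizontal planes \virg{$n$-planes}; their dimension is $\dim\Gr(\uR,l)$, as your subsequent indexing by $(i,j)$ correctly reflects.
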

\begin{proof}
Observe first that coordinates $A_{i,j}, v_{\lambda}^{h}$ used in the description of the infinitesimal pasting conditions \ref{infpaste1} and \ref{infpaste1} are precisely the adapted coordinates $A_{i,j}, v_{0,\lambda}^{h}$ on $M^{0}$ (where now $\delta=0$). Fix a point $\phi_{0}\in M^{0}$. Any $\dim(\Gr(\uR,l))$-dimensional horizontal plane $Q\subset T_{\phi_{0}}M^{0}$ is now of the form 
\begin{equation}\label{generalbasisQ0}
C_{i,j}=\partial_{A_{i,j}}+\sum C^{0,\lambda}_{i,j,h}\partial_{v_{0,\lambda}^{h}}
\end{equation}
with unique coefficients $C^{0,\lambda}_{i,j,h}$ which may be thought of as fiber coordinates $v_{0,\lambda,i,j}^{h}$ in the first jet bundle of $\Dir$ corresponding to partial derivatives $\partial_{A_{i,j}}v_{0,\lambda}^{h}$. By lemma \ref{lem:conditions on coefficients of Qphi}, $Q$ is of the form $Q_{\phi_{1}}$ iff the coefficients $C^{0,\lambda}_{i,j,h}$ satisfy
\begin{equation}\label{conditionC1'0}
C^{0,\lambda}_{i,j,h} = C^{0,\lambda'}_{i,j',h} 
\end{equation}
whenever the indices satisfy
\begin{align}
\lambda_{j}>0,& \; \lambda'_{j'}>0 \label{conditionC1a' 0}\\ 
 \lambda-1_{j}&=\lambda'-1_{j'} \label{conditionC1c' 0}
\end{align}
and condition
\begin{equation}
C^{0,\lambda}_{i,j,h} = 0 \quad \text{whenever} \quad \lambda_{j}=0. \label{conditionC2' 0}
\end{equation}
These are precisely the pasting conditions \ref{infpaste1} and \ref{infpaste1}.
\end{proof}

We finish the proof of the main theorem with
\begin{lem}
When $l>1$ The only maximal integral elements of  $(\Iln,\Dub)$ transversal to $\Iln\to \Il$ are the vertical tangent spaces of the projection 
\begin{equation}
\pr_{n}:\Gr(\uR,l)\times \Jkp_{\thetak} \to  \Jkp_{\thetak}
\end{equation}
i.e. planes of the distribution $\F^{-1}$. So the maximal integral submanifolds of $\Dub$ are the fibers of $\Iln \to \Jkp_{\thetak}$ and hence correspond bijectively to \virg{full} jets of order $k+1$ extending $\thetak$. This proves that the prolongation of $(\Iln,\Dub)$ is $(\Iln,V\pr_{n})$ which is an involutive distribution.
 \end{lem}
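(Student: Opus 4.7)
The plan is to work in the adapted local frame $\{D_{i,j},V^{h}_{\delta,\lambda}\}$ on $\Iln$ and exploit the fact that the only non-trivial frame commutator is $[V^{h}_{\delta,0},D_{i,j}] = V^{h}_{\delta-1_i,1_j}$ when $\delta_{i}>0$ (Theorem \ref{thmCommutatorsFrame}). Recall the direct sum splitting $\Dub=\F^{0}=\V^{1}\oplus V\pr_{n}$, with $\V^{1}=V\pr_{l}$ locally generated by $\{V^{h}_{\delta,0}\}_{|\delta|=k+1}$ and $V\pr_{n}$ by $\{D_{i,j}\}$. Any subspace $K\subset\Dub_{(L,R)}$ of dimension $\dim\Gr(\uR,l)=l(n-l)$ satisfying $K\cap V\pr_{l}=0$ is automatically the graph of a unique linear map $V\pr_{n}\to\V^{1}$, so it admits a basis of the form
\[
X_{i,j}\;=\;D_{i,j}+\sum_{h,\,|\delta|=k+1}c^{h}_{\delta,i,j}\,V^{h}_{\delta,0}.
\]

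Extending these as sections with constant coefficients, the commutation relations give
\[
[X_{i,j},X_{i',j'}]\;\equiv\;-\sum_{h,\,|\Delta|=k}c^{h}_{\Delta+1_{i},\,i',j'}V^{h}_{\Delta,1_{j}}\;+\;\sum_{h,\,|\Delta|=k}c^{h}_{\Delta+1_{i'},\,i,j}V^{h}_{\Delta,1_{j'}}\pmod{\Dub}.
\]
Integrality of $K$ means vanishing of this expression for every pair $(i,j),(i',j')$. Here the hypothesis $l>1$ enters decisively: one may choose $j\neq j'$, in which case $V^{h}_{\Delta,1_{j}}$ and $V^{h}_{\Delta,1_{j'}}$ are distinct basis vectors of $\F^{1}/\F^{0}$, and their coefficients must vanish independently. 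This yields $c^{h}_{\Delta+1_{i},i',j'}=0$ for all $\Delta,h,i$ and all $(i',j')$. Because every multiindex $\delta$ with $|\delta|=k+1$ has $\delta_{i}>0$ for some $i$, every coefficient $c^{h}_{\delta,i',j'}$ is forced to vanish, and therefore $K=\langle D_{i,j}\rangle=(V\pr_{n})_{(L,R)}$.

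Since the assignment $(L,R)\mapsto(V\pr_{n})_{(L,R)}$ is the vertical distribution of a smooth submersion, it is involutive and its maximal integral leaves are the fibers of $\pr_{n}\colon\Iln\to\Jkp_{\thetak}$. By the product decomposition of Lemma \ref{lem:product_decomp_flags}, these fibers are in canonical bijection with points of $\Jkp_{\thetak}$, i.e.\ with full $(k+1)\St$ order extensions of $\thetak$; the prolongation thus stabilizes at $(\Iln,V\pr_{n})$.

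The main obstacle is the index book-keeping inside the commutator expansion: one must carefully distinguish the case $j=j'$, where the two $V^{h}_{\Delta,1_{?}}$ summands collapse onto the same basis vector and produce only a symmetry relation of the kind already appearing in Lemma \ref{lem:coefficents of integral Q's}, from the case $j\neq j'$, where they sit on distinct basis vectors and force each coefficient to vanish. This very dichotomy is precisely what fails when $l=1$, in harmony with the fact that the pasting conditions are vacuous in that regime.
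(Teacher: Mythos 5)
Your argument is correct and is essentially the paper's: the paper obtains the lemma as the $q=k+1$ case of Lemma \ref{lem:coefficents of integral Q's} (where $|\delta|=k+1$ forces $\lambda=0$, so condition \ref{conditionC2} kills every coefficient when $l>1$), and that lemma's proof is exactly the commutator computation you carry out directly, including the $j=j'$ versus $j\neq j'$ dichotomy. So you have simply inlined the special case of the general lemma rather than citing it; no substantive difference.
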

 
 \begin{proof}
Follows directly from lemma \ref{lem:coefficents of integral Q's} equation \ref{conditionC2} since in this case $\lambda=0$.
\end{proof}

This concludes the proof of main theorem \ref{thm:main}.
 
\section{Notational conventions}\label{sec:conventions}
For a finite dimensional vector space $W$ over a field $\mathbb{K}$, and $V\subset W$ a subspace we use the following conventions:
\begin{enumerate}
\item $\Gr(W,l)$ denotes the Grassmannian of all $l$ dimensional subspaces of $W$.
\item $S^{k}W$ denotes the $k\Th$ symmetric tensor product of $W$.
\item $W^{*}$ denotes the dual $\hom(W,\mathbb{K})$.
\item $V^{\circ}\subset W^{*}$ denotes the annihilator of $V$.
\item $W/V$ denotes the quotient. 
\item $\langle S \rangle$ denotes the span of the subset $S\subset W$
\end{enumerate}
For manifolds $M,N$ and a map $f:M\to N$ we use the conventions:
\begin{enumerate}
\item $Tf:TM\to TN$ denotes the tangent map.
\item $f^{-1}(S)$ denotes the preimage of subset $S\subset N$ under $f:M\to N$.
\item $M_q:=f^{-1}(\{q\})$ denotes the fiber over $q\in N$ when $f:M\to N$ is a bundle. 
\item An $f$-horizontal plane is a tangent subspace of $M$ transversal to the fibers of $f$.
\item $Vf$ denotes the vertical distribution of $f$ when it is a fiber bundle.
\item For a chart $x_{1},\ldots, x_{n}$ on $N$ the associated coordinate fields are denoted with $\partial_{x_{i}}$
\end{enumerate}
For a multinidex $\delta=(\delta_{1},\ldots,\delta_{n})\in\mathbb{N}^{n}$ and variables $x_{1},\ldots,x_{n}$:
\begin{enumerate}
\item $x^{\delta}=x_{1}^{\delta_{1}}\cdot \ldots \cdot x_{n}^{\delta_{n}}$.
\item $\delta! = \delta_{1}!\cdot\ldots\cdot\delta_{n}!$ is the factorial of the multiindex.
\item $|\delta|=\delta_{1}+\ldots+\delta_{n}$ denotes the length of the multiindex.
\item $1_{j}$ denotes the mutliindex with all zero entries except for the entry at position $j$ equaling $1$.
\end{enumerate}

\subsubsection*{Acknowledgements}
I thank Alexandre Vinogradov for drawing my attention to the polar distribution and encouraging me to study it. I thank Giovanni Moreno for useful conversations as well as an invitation to the University of Opava in the summer of 2013, where part of this work was completed. Finally I thank the anonymous reviewer for many useful suggestions on improving the presentation.\par
%
%
\section{References}

\end{document}